\documentclass{amsart}
\usepackage[utf8]{inputenc}
\usepackage{amsmath}
\usepackage{amssymb}
\usepackage{amsthm}
\usepackage{enumitem}
\usepackage{todonotes}
\usepackage{hyperref}
\usepackage{url}
\usepackage{xcolor}

\setlist[enumerate]{label=\rm{(\arabic*)}, ref=(\arabic*)}

\DeclareMathOperator{\St}{St}
\DeclareMathOperator{\Aut}{Aut}
\DeclareMathOperator{\rist}{Rist}

\DeclareMathOperator{\Sym}{Sym}

\newcommand{\NN}{\mathbb{N}}
\newcommand{\ZZ}{\mathbb{Z}}
\newcommand{\MF}{\mathcal{MF}}
\newcommand{\B}{\mathcal{B}}

\newtheorem{thm}{Theorem}[section]
\newtheorem{lemma}[thm]{Lemma}
\newtheorem{prop}[thm]{Proposition}
\newtheorem{cor}[thm]{Corollary}

\theoremstyle{definition}

\newtheorem{defn}[thm]{Definition}
\newtheorem{rem}[thm]{Remark}
\newtheorem{notation}[thm]{Notation}

\newtheorem*{thm:MaxAreBranch}{Theorem \ref{thm:MaxSubgroupsOfBranchAreBranch}}
\newtheorem*{thm:Basilica}{Theorem \ref{thm:BasilicaIsInMF}}

\newcommand{\1}{\mathbf{1}}

\title[On maximal subgroups of infinite index in branch groups]{On maximal subgroups of infinite index in branch and weakly branch groups}
\author{Dominik Francoeur}

\begin{document}

\begin{abstract}
We generalise a technical tool, originally developed by Pervova for the study of maximal subgroups in Grigorchuk and GGS groups, to all weakly branch groups satisfying a natural condition, and in particular to all branch groups. We then use this tool to prove that every maximal subgroup of infinite index of a branch group is also a branch group. As a further application of this result, we show that every maximal subgroup of the Basilica group is of finite index.
%We prove that if $G$ is a weakly branch group such that every proper quotient has only maximal subgroups of finite index (and so, in particular, if $G$ is any branch group), then projections of maximal subgroups of infinite index of $G$ are also maximal and of infinite index. As a consequence, we prove that every maximal subgroup of infinite index of such a (weakly) branch group is also a (weakly) branch group. As an application of this result, we show that every maximal subgroup of the Basilica group is of finite index.
\end{abstract}

\maketitle

\section{Introduction}

The study of maximal subgroups in branch groups dates back to the work of Pervova in 2000 \cite{Pervova00} and 2005 \cite{Pervova05}, where she showed that all the maximal subgroups of the periodic Grigorchuk and GGS groups are of finite index. Her original motivation was to shed some light on a conjecture of Kaplansky regarding the groups for which the Jacobson radical of a group algebra over a field of prime characteristic is equal to the augmentation ideal. However, her results also had other interesting implications for the groups she studied. For example, using the fact that every maximal subgroup of the Grigorchuk group is of finite index, Grigorchuk and Wilson showed in \cite{GrigorchukWilson03} that the Grigorchuk group is subgroup separable (also known as locally extended residually finite, or LERF), and that its finitely generated subgroups are either finite or abstractly commensurable with the entire group.

Pervova's results opened up a new line of enquiries in the field of branch groups. In \cite{BartholdiGrigorchukSunic03}, Bartholdi, Grigorchuk and \v{S}uni\'{k} asked if it was true that a maximal subgroup of a branch group must always be of finite index. This was answered negatively by Bondarenko in 2010 in \cite{Bondarenko10}, where he constructed an example of a branch group with maximal subgroups of infinite index. It was later shown by the author and Garrido in \cite{FrancoeurGarrido18} that even some Grigorchuk groups of intermediate growth admit maximal subgroups of infinite index (although they must necessarily contain elements of infinite order, by Pervova's result). In a different direction, Pervova's methods were extended to larger and larger classes of branch groups to prove that they admit no maximal subgroups of infinite index, notably by Alexoudas, Klopsch and Thillaisundaram \cite{AlexoudasKlopschThillaisundaram16}, and Klopsch and Thillaisundaram \cite{KlopschThillaisundaram18}.

%At the heart of many of these results lies a technical lemma stating that for certain branch groups, the projections of vertex stabilisers of proper dense subgroups in the profinite topology are also proper and dense. This was proved by Pervova for the periodic Grigorchuk and GGS groups in her original paper, and was generalised by the author and Garrido in \cite{FrancoeurGarrido18} to self-replicating just-infinite branch groups with a regular and primitive action on the fist level of a regular rooted tree. In this article, we prove that this result holds in fact for all weakly branch groups satisfying a very natural conditions, and in particular is true for all branch groups (Theorem \ref{thm:ProjectionsOFMaximalAreMaximal}). This gives us a very general tool to study maximal subgroups in branch and weakly branch groups. As an application, we use this result to show that maximal subgroups of branch (respectively weakly branch) groups are also branch (respectively weakly branch) groups.
At the heart of many of these results lies a technical lemma stating that for certain branch groups, the projections of vertex stabilisers of proper dense subgroups in the profinite topology are also proper and dense. This was proved by Pervova for the periodic Grigorchuk and GGS groups in her original paper, and was generalised by the author and Garrido in \cite{FrancoeurGarrido18} to self-replicating just-infinite branch groups with a regular and primitive action on the first level of a regular rooted tree. In this article, we prove that this result holds in fact for all weakly branch groups satisfying a natural condition, and in particular is true for all branch groups (Theorem \ref{thm:ProjectionsOFMaximalAreMaximal}). This gives us a very general tool to study maximal subgroups in branch and weakly branch groups. As an application, we use this result to show the following structural result about maximal subgroups of infinite index of branch groups.

\begin{thm:MaxAreBranch}
Every maximal subgroup of infinite index of a branch group is also a branch group.
\end{thm:MaxAreBranch}

%As a further illustration of the usefulness of this tool, we then proceed to study the maximal subgroups of a weakly branch group known as the Basilica group. This group, which was introduced by Grigorchuk and \.{Z}uk in \cite{GrigorchukZuk02,GrigorchukZuk02b}, was the first example of an amenable but not subexponentially amenable group, as was shown by Bartholdi and Vir\'{a}g \cite{BartholdiVirag05}. We prove (Theorem \ref{thm:BasilicaIsInMF}) that every maximal subgroup of the Basilica group is of finite index, which makes it the first example of such a group among weakly branch but not branch groups.
As a further illustration of the usefulness of this tool, we then proceed to study the maximal subgroups of a weakly branch group known as the Basilica group. This group, which was introduced by Grigorchuk and \.{Z}uk in \cite{GrigorchukZuk02,GrigorchukZuk02b}, was the first example of an amenable but not subexponentially amenable group, as was shown by Bartholdi and Vir\'{a}g \cite{BartholdiVirag05}. We prove the following:
\begin{thm:Basilica}
Every maximal subgroup of the Basilica group is of finite index.
\end{thm:Basilica}
To the best of our knowledge, it is the first example of a group with this property among weakly branch but not branch groups.

%The paper is organised as follows. In Section \ref{section:Preliminaries}, we recall a few definitions and key results regarding branch and weakly branch groups. In Section \ref{section:MaxSubgroups}, we prove our main result, Theorem \ref{thm:ProjectionsOFMaximalAreMaximal}, which states that projections of maximal subgroups of infinite index of branch groups are also maximal subgroups of infinite index. As a consequence, we prove in Theorem \ref{thm:MaxSubgroupsOfBranchAreBranch} that every maximal subgroup of infinite index of a branch group is also a branch group. Finally, in Section \ref{section:Basilica}, we apply the results of Section \ref{section:MaxSubgroups} to show that every maximal subgroup of the Basilica group is of finite index.
The paper is organised as follows. In Section \ref{section:Preliminaries}, we recall a few definitions and key results regarding branch and weakly branch groups. In Section \ref{section:MaxSubgroups}, we prove our main result, Theorem \ref{thm:ProjectionsOFMaximalAreMaximal}, which states that projections of maximal subgroups of infinite index of branch groups are also maximal subgroups of infinite index. We then obtain Theorem \ref{thm:MaxSubgroupsOfBranchAreBranch} as a consequence. Finally, in Section \ref{section:Basilica}, we apply the results of Section \ref{section:MaxSubgroups} to the Basilica group to prove Theorem \ref{thm:BasilicaIsInMF}.

We would like to mention that a preliminary version of some of the results in Section \ref{section:MaxSubgroups}, and most of Section \ref{section:Basilica}, were contained in the author's PhD thesis \cite{FrancoeurThesis19}.

\subsection*{Acknowledgements} The author would like to thank Tatiana Nagnibeda, Anitha Thillaisundaram and Rostislav Grigorchuk for useful discussions and comments regarding this work.

\section{Preliminaries}\label{section:Preliminaries}

In this section, we define the basic notions and set the notation that will be used throughout the paper.

\subsection{Rooted trees}

Recall that a rooted tree is a tree (i.e. a simple graph without cycles) equipped with a distinguished vertex called the root. Throughout this entire text, unless otherwise specified, we will always assume that our rooted trees are infinite and locally finite.

Let $T$ be a rooted tree. By a slight abuse of notation, if $v$ is a vertex of $T$, we will write $v\in T$.

Since $T$ is a tree, for any pair of vertices $v,w\in T$, there exists a unique simple path between $v$ and $w$. The \emph{combinatorial distance} between $v$ and $w$, which we will denote by $d(v,w)$, is then the number of edges in this unique simple path.

\begin{defn}
Let $T$ be a rooted tree, let $v_0\in T$ be the root of $T$ and let $v\in T$ be an arbitrary vertex. The \emph{length} of $v$, denoted $|v|$, is defined as
\[|v|=d(v_0,v).\]
For $n\in \NN$, the set
\[L_n=\{v\in T \mid |v|=n\}\]
is called the \emph{n\textsuperscript{th} level} of $T$.
\end{defn}

The combinatorial distance also allows us to define a partial order on the vertices of $T$.

\begin{defn}
Let $T$ be a rooted tree of root $v_0\in T$. We define a partial order on the vertices of $T$ by saying that for $v,w\in T$, $v\leq w$ if and only if
\[d(v_0,v)+d(v,w)=d(v_0,w).\]
In other words, $v\leq w$ if and only if $v$ lies on the unique simple path connecting $w$ to the root.
\end{defn}

Using this partial order, we can define the notion of a subtree rooted at a specific vertex of a rooted tree.

\begin{defn}
Let $T$ be a rooted tree and $v\in T$ be an arbitrary vertex. We denote by $T_v$ the subtree of $T$ whose vertex set is the set of all vertices $w\in T$ such that $v\leq w$. By setting $v$ as the root, $T_v$ becomes a rooted tree that we call the \emph{subtree of $T$ rooted at $v$}.
\end{defn}

\subsection{Spherically homogeneous rooted trees}

In what follows, we will be interested in rooted trees which are as symmetric as possible. More precisely, we will concern ourselves with \emph{spherically homogeneous} trees.

\begin{defn}
Let $T$ be a rooted tree. Then, $T$ is said to be \emph{spherically homogeneous} if for all $n\in \NN$ and for all $v,w\in L_n$, we have $\text{deg}(v)=\text{deg}(w)$.
\end{defn}

To each spherically homogeneous rooted tree $T$, one can associate a sequence of integers $(m_i)_{i\in \NN}$, where $m_0$ is the degree of the root of $T$, and for $i>0$, $m_i+1$ is the degree of vertices on level $i$.

Conversely, given a sequence of integers $(m_i)_{i\in \NN}$, one can construct a spherically homogeneous rooted tree $T$ in the following way. For each $i\in \NN^*$, let $A_i$ be a set of cardinality $m_{i-1}$. Let $(A_i)^*$ be the set of all finite sequences of the form $a_1a_2\dots a_n$, where $a_i\in A_i$ for all $1\leq i \leq n$, including the empty sequence. We will call \emph{words in $(A_i)$} the elements of $(A_i)^*$. The \emph{length} of a word will simply be the number of elements in the sequence composing the word. We define a rooted graph $T$ by declaring its set of vertices to be $(A_i)^*$ and its root to be the empty word. Two words in $(A_i)$ are connected by an edge in $T$ if and only if one is a prefix of the other and their length differ by exactly $1$.

It is easy to check that the graph thus described is a spherically homogeneous rooted tree and that any other such tree with the same sequence of degrees must be isomorphic to it. Therefore, in what follows, when dealing with spherically homogeneous rooted trees, we will often implicitly assume that their vertex set is of the form $(A_i)^*$ for a given collection of finite sets $(A_i)$. Furthermore, by a slight abuse of notation, we will often simply write $T=(A_i)^*$ to mean the rooted tree whose vertex set is $(A_i)^*$.

In the special case where $(m_i)_{i\in \NN}$ is a constant sequence, the rooted tree $T$ is called a \emph{regular rooted tree}. In this case, we can choose a constant sequence for the sets $A_i$. We will thus write the vertex set of $T$ simply as $A^*$ for some finite set $A$ of cardinality $m_i$.

The advantage of representing vertices of a spherically homogeneous rooted tree as words in a sequence of finite sets is that it then gives us a canonical isomorphism between subtrees rooted at the same level. Indeed, let $T=(A_i)^*$ be a spherically homogeneous rooted tree and let $v\in (A_i)^*$ be a word of length $n$ for some $n\in \NN$. Then, it is clear from the definitions that the subtree $T_v$ is simply the subtree whose vertex sets is all the words in $(A_i)^*$ beginning with $v$. One can then define a bijection between the vertices of $T_v$ and the spherically homogeneous rooted tree $(A_{i+n})^*$ simply by deleting the prefix $v$. It is clear that this bijection is an isomorphism of rooted trees between $T_v$ and $(A_{i+n})^*$. In what follows, we will often implicitly identify these two rooted trees through this isomorphism. Since, for every $v\in L_n$, we have a canonical isomorphism between $T_v$ and the rooted tree $(A_{i+n})^*$, which does not depend on $v$, we also get a canonical isomorphism between $T_v$ and $T_w$ for all $v,w\in L_n$.

\subsection{Automorphisms of rooted trees}

Let $T$ be a rooted tree. We will denote by $\Aut(T)$ the group of all automorphisms of $T$. Since such an automorphism must preserve the root, it obviously cannot act transitively on the vertices of $T$. However, in the case of spherically homogeneous rooted trees, it acts \emph{spherically transitively}.

\begin{defn}
Let $T$ be a rooted tree and let $G\leq \Aut(T)$ be a group of automorphisms of $T$. We say that the action of $G$ on $T$ is \emph{spherically transitive} if $G$ acts transitively on the set $L_n$ for all $n\in \NN$.
\end{defn}

Let $T=(A_i)^*$ be a spherically homogeneous rooted tree and let $v=a_1a_2\dots a_n\in (A_i)^*$, $w=a_1a_2\dots a_n a_{n+1}\dots a_{n+k}\in (A_i)^*$ be two vertices of $T$, with $v\leq w$. As any automorphism of $T$ must preserve the partial order on its vertices, for any $g\in \Aut(T)$, we have $g\cdot v\leq g\cdot w$. In particular, this implies that there must exist a unique automorphism $g_{v} \in \Aut(T_{v})$ such that
\[g\cdot (va_{n+1}\dots a_{n+k}) = (g\cdot v)(g_v\cdot (a_{n+1}\dots a_{n+k})).\]

\begin{defn}
Let $T$ be a spherically homogeneous rooted tree. We will denote by $\varphi_v\colon \Aut(T) \rightarrow \Aut(T_v)$ the map that sends $g\in \Aut(T)$ to $g_v\in \Aut(T_v)$, and we will call this map the \emph{projection at $v$}.
\end{defn}

Note that the map $\varphi_v$ is not a homomorphism (unless $v$ is the root). However, it becomes one if we restrict it to $\St(v)$. Therefore, we will frequently want to restrict this projection map to elements that stabilise $v$. For convenience, let us introduce a notation for the image of a stabiliser under this map.

\begin{notation}\label{notation:Projections}
Let $T$ be a spherically homogeneous rooted tree and let $G\leq \Aut(T)$ be a group of automorphisms of $T$. We will denote by $G_v=\varphi_v(\St_G(v))$ the image of the stabiliser of $v$ in $G$ under the projection at $v$.
\end{notation}

Note that $G_v\leq \Aut(T_v)$. In the case where $T_v$ is naturally isomorphic to $T$, it could happen that $G_v$ is a subgroup of $G$. The groups for which this happens are called \emph{self-similar}.

\begin{defn}\label{defn:SelfSimilarGroup}
Let $T$ be a regular rooted tree and let $G\leq \Aut(T)$ be a group of automorphisms of $T$. Then, $G$ is said to be \emph{self-similar} if $G_v\leq G$ for all $v\in T$, and \emph{self-replicating} if $G_v=G$ for all $v\in T$. 
\end{defn}

Let $T=(A_i)^*$ be a spherically homogeneous rooted tree, and let $n\in \NN$. We will denote by $(A_i)_{i=1}^{n}$ the set of all words of length $n$ in $(A_i)^*$. Let $\pi_n\colon \Aut(T) \rightarrow \Sym((A_i)_{i=1}^n)$ be the natural map given by the action of $\Aut(T)$ on $(A_i)_{i=1}^n$, and let $\Aut((A_i)_{i=1}^n)$ be the image of $\Aut(T)$ under this map. It is clear that any automorphism $g\in \Aut(T)$ is uniquely determined by the collection $\{\pi_n(g), (g_v)_{v\in L_n}\}$. We thus have the following fact.

\begin{prop}\label{prop:Psin}
Using the notation above, the map
\begin{align*}
\psi_n\colon \Aut(T) &\rightarrow \Aut((A_i)_{i=1}^{n}) \ltimes \Aut((A_{i+n})^*)^{L_n}\\
g &\mapsto (\pi_n(g), (v\mapsto g_v))
\end{align*}
is an isomorphism, where $\Aut((A_{i+n})^*)^{L_n}$ denotes the set of maps from $L_n$ to $\Aut((A_{i+n})^*)$.
\end{prop}

In what follows, when considering a spherically homogeneous rooted tree $(A_i)^*$, we will often assume that we have an alphabetical order on each of the sets $A_i$, which then allows us to order the vertices of the $n$\textsuperscript{th} level of the tree by lexicographical order. Using this order, we can represent elements of $\Aut((A_{i+n})^*)^{L_n}$ simply as an $|L_n|$-tuple of elements of $\Aut((A_{i+n})^*)$. Thus, for $g\in \Aut(T)$, we will write
\[\psi_n(g) = \sigma (g_1, \dots, g_{|L_n|})\]
with $\sigma\in \Aut((A_i)_{i=1}^{n})$ and $g_j\in \Aut((A_{i+n})^*)$ for all $1\leq j \leq |L_n|$. %To simplify the notation, we will often drop the $\psi_n$ when there is no risk of confusion and write only\todo{Do we actually do that until the last section?}
%\[g=\sigma (g_1, \dots, g_{|L_n|}).\]

\subsection{The boundary of a rooted tree}

Let $T$ be a rooted tree, and let $\partial T$ be the set of all infinite simple path in $T$ starting at the root. Given a path $\xi \in \partial T$ and a vertex $v\in T$, we will write $v\leq \xi$ if the path $\xi$ passes through $v$. Note that this relation is compatible with the partial order on the vertices of $T$, in the sense that if $u\leq v$ and $v\leq \xi$, then $u\leq \xi$, where $u,v\in T$ and $\xi\in \partial T$.

Given $v\in T$, the \emph{cylinder set at $v$} is the set
\[C_v=\{\xi \in \partial T \mid v\leq \xi\}.\]
One can show that the set of all cylinder sets forms a base for a topology on $\partial T$. The set $\partial T$ equipped with the topology defined by this base is called the \emph{boundary} of $T$. If no cylinder set contains only a single point (which is always the case, for instance, in a spherically homogeneous rooted tree $(A_i)^*$ with $|A_i|>1$), then the boundary $\partial T$ is homeomorphic to the Cantor set.

Any automorphism of a rooted tree $T$ can be naturally extended to a homeomorphism of the boundary $\partial T$. Furthermore, the support of such a homeomorphism must always be an open set, as the next lemma shows.

\begin{lemma}\label{lemma:SupportIsOpen}
Let $g\in \Aut(T)$ be an automorphism of the rooted tree $T$, and let
\[\text{Supp}(g)=\left\{\xi\in \partial T \mid g\xi\ne \xi\right\}\]
be the complement of the set of fixed points of the action of $G$ on the boundary of the tree $\partial T$ (we will call this set the support of $G$, even though we would need to take the closure of this set in order to obtain the classical notion of support). Then, $\text{Supp}(g)$ is an open set of $\partial T$.
\end{lemma}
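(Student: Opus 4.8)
The plan is to prove the equivalent statement that the complement of $\text{Supp}(g)$, namely the fixed-point set $\text{Fix}(g)=\{\xi\in\partial T\mid g\xi=\xi\}$, is closed; equivalently, I will show directly that every point of $\text{Supp}(g)$ has an open neighbourhood (a cylinder set) entirely contained in $\text{Supp}(g)$. Since the cylinder sets $C_v$ form a base for the topology on $\partial T$, exhibiting such a neighbourhood suffices to conclude that $\text{Supp}(g)$ is open.

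The key observation concerns how $g$ acts on boundary points. I identify each $\xi\in\partial T$ with its sequence of vertices $(v_n)_{n\in\NN}$, where $v_n$ is the unique vertex of length $n$ with $v_n\leq\xi$. Because an automorphism preserves levels and the partial order, the action of $g$ on the boundary is given by $g\cdot\xi=(g\cdot v_n)_{n\in\NN}$. Consequently $g\xi=\xi$ if and only if $g\cdot v_n=v_n$ for every $n$, and therefore $\xi\in\text{Supp}(g)$ if and only if there exists some level $n$ at which the vertex $v_n$ on $\xi$ satisfies $g\cdot v_n\neq v_n$.

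To finish, given $\xi\in\text{Supp}(g)$ I fix such an $n$ and the corresponding vertex $v=v_n\leq\xi$, so that $\xi\in C_v$ and $g\cdot v\neq v$. For any $\eta\in C_v$, the vertex of length $n$ lying on $\eta$ is exactly $v$, so the vertex of length $n$ on $g\cdot\eta$ is $g\cdot v\neq v$; hence $g\cdot\eta$ and $\eta$ already differ at level $n$, giving $\eta\in\text{Supp}(g)$. Thus $C_v\subseteq\text{Supp}(g)$ is the desired open neighbourhood of $\xi$, and $\text{Supp}(g)$ is open.

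This argument is essentially routine, and I do not anticipate a genuine obstacle. The only point requiring care is the precise description of the action of $g$ on $\partial T$ and the observation that a single moved vertex along a path forces the whole path to move; once this is made explicit, openness follows immediately from the fact that cylinder sets form a base.
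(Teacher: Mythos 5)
Your proof is correct, but it takes a different route from the paper. The paper disposes of the lemma in one line of general topology: $g$ is a homeomorphism of the Hausdorff (and first-countable) space $\partial T$, so its fixed-point set is closed and hence $\mathrm{Supp}(g)$ is open. You instead give a direct combinatorial verification: if $g\xi\ne\xi$ then the two paths already disagree at some finite level $n$, and the cylinder $C_{v_n}$ at the level-$n$ vertex of $\xi$ is then entirely contained in $\mathrm{Supp}(g)$, since every $\eta\in C_{v_n}$ passes through $v_n$ while $g\eta$ passes through $g\cdot v_n\ne v_n$. Both arguments are sound. The paper's is shorter and requires no structure beyond continuity and the Hausdorff property (indeed first-countability is not even needed, since the fixed-point set is the preimage of the closed diagonal under $\xi\mapsto(\xi,g\xi)$). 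Your argument is self-contained, avoids appealing to any general topology fact, and has the minor advantage of producing an explicit basic open set inside the support --- which is exactly the form in which the lemma is used later (Corollary \ref{cor:CommutingSubnormalHaveVertexTrivialAction} needs a cylinder $C_v\subseteq\mathrm{Supp}(H_2)$). No gaps.
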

\begin{proof}
Since $g$ is a homeomorphism of a first-countable Hausdorff space, its set of fixed points is closed, so $\text{Supp}(g)$ is open.
%If $\text{Supp}(g)=\emptyset$, then the result is trivial. Otherwise, let $\xi\in \text{Supp}(g)$ be an arbitrary element of the support. We will show that there exists an open set $U\subseteq \partial T$ such that $\xi\in U \subseteq \text{Supp}(g)$.

%Since $\xi \in \text{Supp}(g)$, we have $g\xi\ne \xi$. Let $u\in T$ be the smallest vertex such that $u\leq \xi$ and $gu\ne u$. Such a vertex exists by our assumptions on $\xi$. Then, since the action of $g$ preserves the structure of the tree, no vertex of $T_u$ can be fixed by $g$. This implies that $C_u\subseteq \text{Supp}(g)$, which concludes the proof.
\end{proof}

\subsection{Stabilisers and rigid stabilisers}

Let $T$ be a rooted tree. Given a group $G\leq \Aut(T)$ of automorphisms of $T$, the action of $G$ on $T$ gives rise to many different subgroups. Notably, for $v\in T$, we have the \emph{vertex stabiliser} $\St_G(v)$, and for $n\in \NN$, we have the \emph{level stabiliser}
\[\St_G(n)=\bigcap_{v\in L_n}\St_G(v).\]
In addition to these stabilisers, we also have what is known as \emph{rigid stabilisers}.

\begin{defn}
Let $T$ be a rooted tree, let $G\leq \Aut(T)$ be a group of automorphisms of $T$, and let $U\subseteq T$ be a subset of vertices of $T$. The \emph{rigid stabiliser} $\rist_G(U)$ of $U$ is the subgroup of $G$ of all the elements that act trivially outside of the subtrees rooted at elements of $U$:
\[\rist_G(U)=\left\{g\in G \mid gv=v, \quad \forall v\notin \bigcup_{u\in U}T_u \right\}.\]
In the special case where $U=\{v\}$, we will simply write $\rist_G(v)$.
\end{defn}

In other words, the rigid stabiliser of a set $U$ is the subgroup of all elements of $G$ whose support is contained in $\bigcup_{u\in U}C_u\subseteq \partial T$.

Given an integer $n\in \NN$, we can also define the \emph{rigid stabiliser of the $n$\textsuperscript{th} level}.
\begin{defn}
Let $G\leq \Aut(T)$ be a group of automorphism of a rooted tree $T$, and let $n\in \NN$ be an integer. The \emph{rigid stabiliser of the $n$\textsuperscript{th} level}, $\rist_G(n)$, is the subgroup
\[\rist_G(n) = \left\langle \bigcup_{v\in L_n}\rist_G(v) \right\rangle\]
generated by the rigid stabilisers of each vertex in $L_n$.
\end{defn}

Given two vertices $u,v\in L_n$ on level $n$, elements in $\rist_G(u)$ and elements in $\rist_G(v)$ have support $C_u$ and $C_v$, respectively. Since those two sets are disjoint, we have
\[\rist_G(n)\cong \prod_{v\in L_n}\rist_G(v).\]

\subsection{Branch and weakly branch groups}

We are now ready to give the definition of a (weakly) branch group.

\begin{defn}
Let $T$ be a spherically homogeneous rooted tree, and let $G\leq \Aut(T)$ be a group of automorphism of $T$. If
\begin{enumerate}[label=(\roman*)]
\item $G$ acts spherically transitively,
\item $\rist_G(v) \ne \{1\}$ for every $v\in T$,
\end{enumerate}
then $G$ is called a \emph{weakly branch group}. If, furthermore,
\begin{enumerate}[label=(\roman*), resume]
\item $\rist_G(n)$ is of finite index in $G$ for all $n\in \NN$,
\end{enumerate}
then $G$ is said to be a \emph{branch group}.
\end{defn}

For self-similar groups (see Definition \ref{defn:SelfSimilarGroup}), we can also ask that the rigid stabilisers also satisfy some form of self-similarity, which gives rise to the notion of regular branch and weakly branch groups.

\begin{defn}\label{defn:RegularBranch}
Let $X$ be a finite alphabet and let $G\leq \Aut(X^*)$ be a self-similar weakly branch group of automorphisms of the rooted tree $X^*$. If there exists a non-trivial subgroup $K\leq G$ such that $K\leq \varphi_v(\rist_K(v))$ for all $v\in X$, the group $G$ is said to be \emph{regular weakly branch over $K$}. If, furthermore, $K$ is of finite index in $G$, we say that $G$ is \emph{regular branch over $K$}.
\end{defn}

The most direct example of a branch group is the group of automorphisms $\Aut(T)$ of a spherically homogeneous rooted tree $T$. Indeed, in that case, the action is spherically transitive and the rigid stabiliser is equal to the level stabiliser, and thus of finite index. Other important examples of branch groups include the Grigorchuk group \cite{Grigorchuk80,Grigorchuk83}, the Gupta-Sidki group \cite{GuptaSidki83} and their various generalisations (see for instance \cite{BartholdiGrigorchukSunic03}).

The following lemma gives us some information about the structure of normal and subnormal subgroups of branch and weakly branch groups. In this context, a version of it was first given by Grigorchuk in \cite{Grigorchuk00}, but it also appeared in various other contexts. Indeed, although we formulate it here in terms of weakly branch groups for simplicity, note that this result is true more generally for groups with micro-supported actions by homeomorphisms on topological spaces (see for instance \cite{Nekrashevych13}, Lemma 4.1). We include a proof here for completeness.

\begin{lemma}\label{lemma:SubnormalContainsDerivedRist}
Let $T$ be a spherically homogeneous rooted tree, let $G$ be a weakly branch group, and let $H\leq G$ be a $k$-subnormal subgroup of $G$ for some $k\in \NN$. If $v\in T$ is a vertex that is not stabilised by $H$, then $\rist_G^{(k)}(v)\leq H$, where $\rist_G^{(k)}(v)$ is the $k$\textsuperscript{th} derived subgroup of $\rist_G(v)$.
\end{lemma}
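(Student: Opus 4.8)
The plan is to proceed by induction on the subnormality length $k$, with the core of the argument being a commutator computation that exploits the disjointness of cylinder sets. Throughout, I use that every element of $\rist_G(v)$, and hence of any of its derived subgroups $\rist_G^{(j)}(v)$, has support contained in the cylinder set $C_v$.

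The key observation is the following. Suppose $h$ is an automorphism with $hv\neq v$. Since $h$ preserves levels, $v$ and $h^{-1}v$ are distinct vertices of the same level, so $C_v\cap C_{h^{-1}v}=\emptyset$. For $a\in\rist_G(v)$, the conjugate $a^h=h^{-1}ah$ has support $h^{-1}\operatorname{Supp}(a)\subseteq C_{h^{-1}v}$, which is disjoint from $C_v$; consequently $a^h$ commutes with every element of $\rist_G(v)$. Writing the commutator as $[a,h]=a^{-1}a^h$ and expanding $[b,[a,h]]$ for $b\in\rist_G(v)$, all occurrences of $a^{\pm h}$ cancel because they commute with both $a$ and $b$, leaving the identity $[b,[a,h]]=[b,a^{-1}]$. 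Thus a single conjugation by a level-moving element $h$ lets one recover an arbitrary commutator of elements of $\rist_G(v)$ as an iterated commutator involving $h$.

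For the base case $k=0$ we have $H=G$ and $\rist_G^{(0)}(v)=\rist_G(v)\le G$, so there is nothing to prove. For the inductive step, fix a subnormal series $H=H_0\trianglelefteq H_1\trianglelefteq\cdots\trianglelefteq H_k=G$ and note that $H_1$ is $(k-1)$-subnormal in $G$ and still fails to stabilise $v$, since it contains $H$. The induction hypothesis then gives $R:=\rist_G^{(k-1)}(v)\le H_1$. Now pick $h\in H$ with $hv\neq v$. For any $a\in R$, normality of $H$ in $H_1$ together with $a,h\in H_1$ yields $[a,h]\in H$; applying the displayed commutator identity, $[b,[a,h]]=[b,a^{-1}]$ for any $b\in R\subseteq\rist_G(v)$, and the left-hand side lies in $H$ because $[a,h]\in H\trianglelefteq H_1$ and $b\in H_1$. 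Hence every commutator $[b,a^{-1}]$ with $a,b\in R$ belongs to $H$. As such commutators generate $R'=\rist_G^{(k)}(v)$, we conclude $\rist_G^{(k)}(v)\le H$, completing the induction.

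The main obstacle is keeping the commutator bookkeeping correct, in particular verifying the cancellation $[b,[a,h]]=[b,a^{-1}]$ from the commutativity relations, and making sure at each step that the relevant normality is invoked in the right group (namely $H\trianglelefteq H_1$, with $a$, $b$ and $h$ all lying in $H_1$) and that the elements being commuted indeed lie in $\rist_G(v)$, so that the support argument applies.
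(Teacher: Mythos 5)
Your proposal is correct and follows essentially the same route as the paper: induction on the subnormality degree, using the induction hypothesis to place $\rist_G^{(k-1)}(v)$ in the next term of the subnormal series, then collapsing a double commutator $[b,[a,h]]$ to $[b,a^{-1}]$ via the disjointness of $C_v$ and $C_{h^{-1}v}$. The only differences are cosmetic (the paper conjugates by $h$ rather than $h^{-1}$ and shifts the induction index by one).
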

\begin{proof}
We proceed by induction on $k$. For $k=0$, we have $H=G$, so there is nothing to prove. Let us now assume that the result is true for some $k\in \NN$, and let us show it for $k+1$.

Let $v$ be a vertex not stabilised by $H$. Let $K\leq G$ be a $k$-subnormal subgroup of $G$ such that $H\trianglelefteq K$. Then, $v$ is not stabilised by $K$, so by our hypothesis, $\rist_G^{(k)}(v)\leq K$.

Let $h\in H$ be such that $hv\ne v$. For any $r_1, r_2\in \rist_G^{(k)}(v)$, we have $[h^{-1},r_1]\in H$ and $[[h^{-1},r_1],r_2]\in H$, since $H$ is normal in $K$ and $\rist_G^{(k)}\leq K$. On the other hand, we have $[h^{-1},r_1] = hr_1^{-1}h^{-1}r_1$, with $hr_1^{-1}h^{-1}\in \rist_G^{(k)}(hv)$. Since $hv\ne v$, this means that $hr_1^{-1}h^{-1}$ commutes with $r_1$ and $r_2$. Therefore,
\[[[h^{-1},r_1],r_2] = [r_1,r_2]\]
and so $[r_1,r_2]\in H$. This implies that $\rist_G^{(k+1)}(v)\leq H$ and thus concludes the proof.
\end{proof}

Of course, the previous lemma is only interesting if we know that $\rist_G^{(k)}(v)$ is not trivial, which we show in the following lemma. Once again, we include a proof for completeness.

\begin{lemma}\label{lemma:RistNotSolvable}
Let $G$ be a weakly branch group acting on a spherically homogeneous rooted tree $T$. Then, for any vertex $v\in T$ and for any $k\in \NN$, the subgroup $\rist_G^{(k)}(v)$ is non-trivial. In other words, $\rist_G(v)$ is not solvable.
\end{lemma}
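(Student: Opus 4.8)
The plan is to prove, by induction on $k$, the statement $(P_k)$: for \emph{every} vertex $v\in T$, the subgroup $\rist_G^{(k)}(v)$ is non-trivial. The base case $(P_0)$ is nothing but the defining property of a weakly branch group, namely $\rist_G(v)\ne\{1\}$ for all $v$. The crucial design choice is to set up the inductive step so that a single commutator gains a full derived level at once; this forces us to draw \emph{both} factors of that commutator from $\rist_G^{(k)}(v)$ rather than the larger group $\rist_G(v)$.

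Before the induction I would record two elementary facts. First, a monotonicity property: if $v\leq a$, then every element of $\rist_G(a)$ has support contained in $C_a\subseteq C_v$, so $\rist_G(a)\leq \rist_G(v)$, and taking $k$-th derived subgroups preserves this inclusion, giving $\rist_G^{(k)}(a)\leq \rist_G^{(k)}(v)$. Second, a disjointness observation: two tree automorphisms with disjoint supports commute, and, conversely, a non-trivial automorphism cannot have its (non-empty) support contained in two disjoint cylinder sets simultaneously. Both facts follow directly from the definition of $\text{Supp}$ and from the fact that distinct vertices on the same level always have disjoint cylinder sets.

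For the inductive step, assume $(P_k)$ and fix a vertex $v$. By $(P_k)$ there is some $1\ne g\in\rist_G^{(k)}(v)$. As $g\ne 1$, it moves some vertex $w$, and since $g\in\rist_G(v)$ fixes everything outside $T_v$, we must have $w\geq v$; setting $a=w$ and $b=gw$, the vertices $a\ne b$ lie on a common level, so $C_a\cap C_b=\emptyset$. Applying $(P_k)$ at the vertex $a$ yields $1\ne r\in\rist_G^{(k)}(a)$, and by monotonicity $r\in\rist_G^{(k)}(v)$, with support contained in $C_a$. Since $g,r\in\rist_G^{(k)}(v)$, the conjugate $grg^{-1}$ again lies in $\rist_G^{(k)}(v)$, and its support is $g\cdot\text{Supp}(r)\subseteq gC_a=C_b$. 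Now form $[g,r]=grg^{-1}r^{-1}\in\rist_G^{(k+1)}(v)$. If this commutator were trivial we would have $grg^{-1}=r$, forcing the non-trivial element $r$ to have support inside both $C_a$ and the disjoint set $C_b$, which is impossible by the disjointness observation. Hence $\rist_G^{(k+1)}(v)\ne\{1\}$, establishing $(P_{k+1})$ and completing the induction.

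The step I expect to be the main obstacle, and the one most easily gotten wrong, is precisely the level-jumping: the naive attempt takes $g$ from $\rist_G(v)$ and $r$ from a deep rigid stabiliser, but then $[g,r]$ only lands in $\rist_G^{(k)}(v)$, not one derived level further. The resolution is to take $g$ itself from $\rist_G^{(k)}(v)$, so that this subgroup is closed under conjugation by $g$, while the monotonicity fact simultaneously places $r$ at the same derived level; the disjoint-cylinder argument then certifies that the commutator is non-trivial. It is worth noting that this proof uses only the weakly branch hypothesis (non-triviality of every $\rist_G(v)$) together with elementary tree combinatorics, and in particular does not require spherical transitivity.
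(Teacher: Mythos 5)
Your proof is correct and is essentially identical to the paper's: both arguments induct on $k$, pick a non-trivial $g\in\rist_G^{(k)}(v)$ moving some $w\in T_v$, take a non-trivial $r\in\rist_G^{(k)}(w)\leq\rist_G^{(k)}(v)$, and certify that the commutator of $g$ and $r$ is a non-trivial element of $\rist_G^{(k+1)}(v)$ because $grg^{-1}$ and $r$ are supported in the disjoint cylinders $C_{gw}$ and $C_w$. The only differences are cosmetic (the choice of $[g,r]$ versus $[g^{-1},r]$ and your explicit statement of the monotonicity and disjoint-support facts, which the paper uses implicitly).
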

\begin{proof}
We proceed by induction on $k$. The case $k=0$ follows directly from the definition of a weakly branch group.

Let us assume that there exists $k\in \NN$ such that $\rist_G^{(k)}(v)$ is non-trivial for all $v\in T$. Then,  for a given $v\in T$, there exist $g\in \rist_G^{(k)}(v)$ and $w\in T_v$ such that $gw\ne w$. By assumption, $\rist_G^{(k)}(w)$ is also non-trivial, and since $w\in T_v$, we have $\rist_G^{(k)}(w)\leq \rist_G^{(k)}(v)$. Let $r\in \rist_G^{(k)}(w)$ be a non-trivial element. We have
\[[g^{-1},r]= gr^{-1}g^{-1}r\in \rist_G^{(k)}(v).\]
We have $r\in \rist_G^{(k)}(w)$ and $gr^{-1}g^{-1}\in \rist_G^{(k)}(gw)$, and since $gw\ne w$, these two subgroups commute, which implies that $[g^{-1},r]\ne 1$. This concludes the proof.
\end{proof}

A consequence of the previous two lemmas is that commuting subnormal subgroups of weakly branch groups have disjoint support.

\begin{prop}\label{prop:CommutingSubgroupsDisjointSupport}
Let $G$ be a weakly branch group acting on a spherically homogeneous rooted tree $T$, and let $H_1, H_2\leq G$ be two subnormal subgroups of $G$. If $[H_1, H_2]=1$, then $\text{Supp}(H_1)\cap \text{Supp}(H_2)=\emptyset$, where
\[\text{Supp}(H_i) = \{\xi\in \partial T \mid h\xi \ne \xi \text{ for some } h\in H_i\}.\]
\end{prop}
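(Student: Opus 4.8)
The plan is to argue by contradiction: assuming the supports meet, I would produce a single vertex $v$ that is moved by \emph{both} $H_1$ and $H_2$, and then combine Lemmas \ref{lemma:SubnormalContainsDerivedRist} and \ref{lemma:RistNotSolvable} to force $\rist_G(v)$ to be solvable, which is impossible.

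First I would suppose there exists $\xi\in \text{Supp}(H_1)\cap \text{Supp}(H_2)$. By definition of the support there are elements $h_1\in H_1$ and $h_2\in H_2$ with $h_1\xi\ne \xi$ and $h_2\xi\ne \xi$. Writing $\xi$ as the increasing sequence of vertices $v_0\leq v_1\leq \cdots$ it passes through, the condition $h_i\xi\ne\xi$ says precisely that $h_i$ moves some vertex $v_{n_i}$ lying on this path. I would then take $v=v_n$ with $n=\max\{n_1,n_2\}$ and check that $v$ is moved by both $h_1$ and $h_2$: if $h_i$ fixed $v$, then, since an automorphism preserves the partial order and therefore maps the level-$n_i$ ancestor of $v$ to the level-$n_i$ ancestor of $h_iv=v$, the vertex $v_{n_i}\leq v$ would be fixed by $h_i$, contrary to the choice of $v_{n_i}$. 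Consequently neither $H_1$ nor $H_2$ stabilises $v$.

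Next, letting $H_1$ be $k_1$-subnormal and $H_2$ be $k_2$-subnormal and setting $k=\max\{k_1,k_2\}$, I would apply Lemma \ref{lemma:SubnormalContainsDerivedRist} to each subgroup at the vertex $v$ and use $\rist_G^{(k)}(v)\leq \rist_G^{(k_i)}(v)$ to deduce
\[\rist_G^{(k)}(v)\leq H_1 \quad \text{and} \quad \rist_G^{(k)}(v)\leq H_2.\]
The decisive observation is then that, because $[H_1,H_2]=1$, any two elements $a,b\in \rist_G^{(k)}(v)$ commute, since $a\in H_1$ and $b\in H_2$. Hence $\rist_G^{(k)}(v)$ is abelian, giving $\rist_G^{(k+1)}(v)=\{1\}$, which directly contradicts Lemma \ref{lemma:RistNotSolvable}. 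This contradiction yields $\text{Supp}(H_1)\cap \text{Supp}(H_2)=\emptyset$.

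The individual steps are short, and I expect the only point requiring genuine care to be the construction in the second paragraph: locating one vertex $v$ moved by both subgroups at once (rather than two possibly distinct vertices), since it is precisely this common vertex that lets the commutator hypothesis collapse $\rist_G^{(k)}(v)$ to an abelian, and hence solvable, subgroup and trigger the contradiction with non-solvability.
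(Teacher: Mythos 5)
Your proposal is correct and follows essentially the same route as the paper: locate a single vertex $v$ moved by both subgroups, apply Lemma \ref{lemma:SubnormalContainsDerivedRist} to get $\rist_G^{(k)}(v)\leq H_1\cap H_2$, and conclude that $\rist_G^{(k+1)}(v)\leq [H_1,H_2]=1$, contradicting Lemma \ref{lemma:RistNotSolvable}. The only difference is that you spell out the ancestor argument producing the common vertex $v$, a step the paper leaves implicit.
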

\begin{proof}
Suppose on the contrary that $\text{Supp}(H_1)\cap \text{Supp}(H_2) \ne \emptyset$. This means that there exist $\xi \in \partial T$, $h_1\in H_1$ and $h_2\in H_2$ such that $h_i\xi \ne \xi$ for $i=1,2$. It follows that there exists $v\in T$ such that $h_i v\ne v$ for $i=1,2$. By Lemma \ref{lemma:SubnormalContainsDerivedRist}, there exists $k\in \NN$ such that $\rist_G^{(k)}(v)\leq H_i$ for $i=1,2$. This means that $\rist_G^{(k+1)}(v)\leq [H_1,H_2]$, which contradicts Lemma \ref{lemma:RistNotSolvable}.
\end{proof}

As a corollary, we get the following, which will be useful later on.

\begin{cor}\label{cor:CommutingSubnormalHaveVertexTrivialAction}
Let $G$ be a weakly branch group acting on a spherically homogeneous rooted tree $T$, and let $H_1, H_2\leq G$ be two subnormal subgroups of $G$ such that $[H_1, H_2]=1$. If $H_2$ is non-trivial, then there exists $v\in T$ such that $H_1\leq \St_G(v)$ and $\varphi_v(H_1)=1$.
\end{cor}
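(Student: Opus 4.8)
The plan is to combine Proposition~\ref{prop:CommutingSubgroupsDisjointSupport} with the openness of supports (Lemma~\ref{lemma:SupportIsOpen}) and the fact that cylinder sets form a base for the topology on $\partial T$. First, since $[H_1,H_2]=1$, Proposition~\ref{prop:CommutingSubgroupsDisjointSupport} yields $\text{Supp}(H_1)\cap\text{Supp}(H_2)=\emptyset$. Next I would observe that $\text{Supp}(H_2)=\bigcup_{h\in H_2}\text{Supp}(h)$ is open, being a union of open sets by Lemma~\ref{lemma:SupportIsOpen}, and that it is non-empty: since $H_2\ne\{1\}$, some non-trivial $h\in H_2$ moves a vertex $w$, and then any boundary path through $w$ is moved off itself, so $\text{Supp}(h)\ne\emptyset$. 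As the cylinder sets $C_v$ form a base for the topology of $\partial T$, the non-empty open set $\text{Supp}(H_2)$ contains some cylinder $C_v$.

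The key point is then that $C_v\subseteq\text{Supp}(H_2)$ forces $C_v\cap\text{Supp}(H_1)=\emptyset$, that is, every element of $H_1$ fixes each boundary point in $C_v$. From this I would extract the two desired conclusions. To see that $H_1\le\St_G(v)$, I would note that for $\xi\in C_v$ and $h\in H_1$ we have $h\xi=\xi$ as infinite paths, so $h$ fixes every vertex lying on $\xi$, and in particular the vertex $v$. To see that $\varphi_v(H_1)=1$, I would restrict $\varphi_v$ to $\St_G(v)$, where it is a homomorphism, and use the canonical identification of $\partial T_v$ with $C_v$: under this identification the action of $\varphi_v(h)$ on $\partial T_v$ coincides with the action of $h$ on $C_v$, which is trivial. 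Since an automorphism of a rooted tree that fixes every boundary point must be the identity, it follows that $\varphi_v(h)=1$ for all $h\in H_1$.

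I expect the only delicate point to be the passage from fixing a boundary path to fixing each of its vertices, and conversely from fixing all of $\partial T_v$ to triviality of the projection $\varphi_v$. Both rest on the elementary fact that a tree automorphism is determined by its action on the boundary whenever every vertex lies on some infinite path, which holds for our infinite, locally finite, spherically homogeneous trees. Everything else is a direct assembly of the already-established results.
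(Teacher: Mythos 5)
Your proposal is correct and follows essentially the same route as the paper: apply Proposition \ref{prop:CommutingSubgroupsDisjointSupport} to get disjoint supports, use openness and non-emptiness of $\text{Supp}(H_2)$ to find a cylinder $C_v\subseteq\text{Supp}(H_2)$, and conclude that $H_1$ acts trivially on $C_v$, hence fixes $v$ and has trivial projection. The only difference is that you spell out the (correct) elementary facts about recovering the action on vertices from the action on the boundary, which the paper leaves implicit.
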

\begin{proof}
By Proposition \ref{prop:CommutingSubgroupsDisjointSupport}, $H_1$ and $H_2$ have disjoint support. Since $H_2$ is non-trivial, its support is non-empty. As it is open by Lemma \ref{lemma:SupportIsOpen}, there is a vertex $v\in T$ such that $C_v\subseteq \text{Supp}(H_2)$. This implies that $C_v\cap \text{Supp}(H_1)=\emptyset$, which means that $H_1$ acts trivially on $C_v$. Consequently, we must have $H_1v=v$ and $\varphi_v(H_1)=1$.
\end{proof}

\subsection{Maximal subgroups and prodense subgroups}

In what follows, we will be interested in the index of maximal subgroups. Let us fix some notation.

\begin{notation}
We will denote by $\MF$ the class of groups whose maximal subgroups are all of finite index.
\end{notation}

The existence of maximal subgroups of infinite index is closely related to the existence of proper prodense subgroups. Let us first recall the definition.

\begin{defn}
Let $G$ be a group. A subgroup $H\leq G$ is called \emph{prodense} if $HN=G$ for all non-trivial normal subgroup $N\trianglelefteq G$.
\end{defn}

Under a natural assumption, the existence of a maximal subgroup of infinite index is equivalent to the existence of a proper prodense subgroup.

\begin{prop}\label{prop:InfiniteIndexIffProdense}
Let $G$ be a finitely generated infinite group such that every proper quotient of $G$ belongs to the class $\MF$. Then, $G$ admits a proper prodense subgroup if and only if $G$ admits a maximal subgroup of infinite index.
\end{prop}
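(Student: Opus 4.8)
The plan is to prove the two implications separately, with the forward direction ($\Rightarrow$) being essentially formal and the reverse direction ($\Leftarrow$) carrying the real content.

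For the easy direction, suppose $M\leq G$ is a maximal subgroup of infinite index. I would argue that $M$ is automatically prodense. Indeed, let $N\trianglelefteq G$ be any non-trivial normal subgroup. Then $MN$ is a subgroup containing $M$, so by maximality either $MN=M$ or $MN=G$. If $MN=M$, then $N\leq M$, and so $M/N$ would be a maximal subgroup of the quotient $G/N$. Since $G/N$ is a proper quotient (as $N\neq\{1\}$), by hypothesis it lies in $\MF$, which forces $M/N$ to have finite index in $G/N$; but then $M=MN$ would have finite index in $G$, contradicting our assumption. Hence $MN=G$ for every non-trivial $N$, i.e.\ $M$ is a proper prodense subgroup.

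The reverse direction is where the work lies. Suppose $H\leq G$ is a proper prodense subgroup; I want to produce a maximal subgroup of infinite index. The natural idea is to enlarge $H$ to a maximal subgroup $M\supseteq H$ and show $M$ still has infinite index. The first obstacle is the \emph{existence} of such a maximal $M$: in an arbitrary group a proper subgroup need not be contained in any maximal subgroup, but here $G$ is finitely generated, so every proper subgroup is contained in a maximal one (a standard Zorn's-lemma argument, using that a union of a chain of proper subgroups is proper because any subgroup containing a fixed finite generating set equals $G$). So fix such an $M\supseteq H$ with $M$ maximal. The key point is that \emph{prodensity is inherited upward}: since $HN=G$ for all non-trivial $N\trianglelefteq G$ and $H\leq M$, we get $MN=G$ for all such $N$, so $M$ is itself prodense. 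It remains to see that a proper prodense maximal subgroup must have infinite index.

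The crux, then, is the implication: a proper prodense subgroup $M$ of infinite index cannot be avoided, or equivalently, a prodense subgroup of \emph{finite} index must be improper. I would argue this as follows. Suppose $M$ is prodense and of finite index in $G$. Then $M$ contains a normal subgroup of finite index, namely its normal core $N=\bigcap_{g\in G}M^g$, which is normal and of finite index in $G$. If $N$ were non-trivial, prodensity would give $MN=G$; but $N\leq M$, so $MN=M$, forcing $M=G$, contradicting properness. Thus the only way out is $N=\{1\}$, meaning $G$ embeds into the finite group $G/N\cong$ the image in $\Sym(G/M)$, so $G$ is finite---contradicting the hypothesis that $G$ is infinite. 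Hence a proper prodense subgroup necessarily has infinite index, and in particular the maximal $M$ constructed above is of infinite index. I expect the main subtlety to be bookkeeping the role of each hypothesis: finite generation is used precisely to embed $H$ in a maximal subgroup, the assumption that proper quotients lie in $\MF$ is used in the forward direction, and infiniteness of $G$ is what rules out the trivial-core case in the reverse direction.
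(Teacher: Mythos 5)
Your proof is correct and follows essentially the same route as the paper: both directions use the same ideas (maximality plus the $\MF$ hypothesis on $G/N$ via the correspondence theorem for one implication, and finite generation to embed $H$ in a maximal subgroup plus the non-trivial finite-index normal core of $G$ for the other). The only difference is cosmetic: you spell out the normal-core argument that the paper leaves implicit.
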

\begin{proof}
$(\Rightarrow)$ Let $H<G$ be a proper prodense subgroup of $G$. Since $G$ is finitely generated, $H$ is contained in a maximal subgroup $M<G$, which must also be prodense. In particular, $M$ cannot contain a non-trivial normal subgroup of $G$. This implies that $M$ must be of infinite index in $G$. Indeed, were it of finite index, it would contain a normal subgroup $N\trianglelefteq G$ of finite index, which would necessarily be non-trivial, since $G$ is infinite.

$(\Leftarrow)$ Let $M<G$ be a maximal subgroup of infinite index, and let $N\trianglelefteq G$ be a non-trivial normal subgroup. If $MN\ne G$, then $N\leq M$ by the maximality of $M$. Therefore, by the correspondence theorem, $M/N$ is a maximal subgroup of infinite index of $G/N$, which is absurd, since $G/N$ is in $\MF$. We conclude that $MN=G$ for all non-trivial normal subgroups of $G$, and so $M$ is a proper prodense subgroup of $G$.
\end{proof}

Note that if we are interested in determining whether a given finitely generated group $G$ is in $\MF$ or not, then the assumptions of Proposition \ref{prop:InfiniteIndexIffProdense} are fairly natural. Indeed, if the group is finite, then it is obviously in $\MF$, and if it admits a quotient which is not in $\MF$, then it cannot be in $\MF$ by the correspondence theorem. Therefore, the only unknown case is when $G$ is infinite and all its proper quotients are in $\MF$. As the next proposition shows, these two assumptions are always satisfied by branch groups.

\begin{prop}\label{prop:QuotientsOfBranchGroupsAreinMF}
Let $G$ be a finitely generated branch group. Then, $G$ is infinite and every proper quotient of $G$ is in $\MF$. In particular, $G$ admits a maximal subgroup of infinite index if and only if it admits a proper prodense subgroup.
\end{prop}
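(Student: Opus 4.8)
The plan is to establish the three assertions in turn and then invoke Proposition~\ref{prop:InfiniteIndexIffProdense}. First I would prove that $G$ is infinite. Since $G$ is branch it acts spherically transitively, so for $v\in L_n$ the orbit--stabiliser relation gives $[G:\St_G(v)]=|L_n|=\prod_{i=0}^{n-1}m_i$, a non-decreasing integer sequence. If this sequence were bounded it would be eventually constant, forcing $m_i=1$ for all large $i$; then for any vertex $v$ lying below the level from which all degrees equal $1$, the subtree $T_v$ is a single ray and hence $\Aut(T_v)=\{1\}$. But then $\rist_G(v)$ acts trivially both inside and outside $T_v$, so $\rist_G(v)=\{1\}$, contradicting the weakly branch condition. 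Thus $|L_n|\to\infty$, and $[G:\St_G(v)]=|L_n|$ shows $G$ is infinite.

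Next I would show that every non-trivial normal subgroup swallows a derived level rigid stabiliser. Let $N\trianglelefteq G$ with $N\neq\{1\}$. Some $g\in N$ moves a vertex $v\in L_n$, so Lemma~\ref{lemma:SubnormalContainsDerivedRist} (with $k=1$) gives $\rist_G^{(1)}(v)\leq N$. For any $w\in L_n$, spherical transitivity provides $h\in G$ with $h\cdot v=w$, and normality of $N$ together with $h\,\rist_G^{(1)}(v)\,h^{-1}=\rist_G^{(1)}(w)$ yields $\rist_G^{(1)}(w)\leq N$ for every $w\in L_n$. Since $\rist_G(n)\cong\prod_{w\in L_n}\rist_G(w)$, its derived subgroup equals $\prod_{w\in L_n}\rist_G^{(1)}(w)=\big\langle \rist_G^{(1)}(w):w\in L_n\big\rangle$, which therefore lies in $N$. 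Writing $R$ for this derived subgroup $\rist_G(n)'$, we have $R\trianglelefteq G$ and $R\leq N$, so $G/N$ is a quotient of $G/R$.

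Then I would reduce to the virtually abelian case. The branch condition gives $[G:\rist_G(n)]<\infty$, and $\rist_G(n)/R$ is abelian, so $G/R$ is a finitely generated group with a finite-index normal abelian subgroup $\rist_G(n)/R$; that is, $G/R$ is finitely generated virtually abelian. As any quotient of an $\MF$-group is again in $\MF$ by the correspondence theorem, it suffices to prove that every finitely generated virtually abelian group $Q$ lies in $\MF$. Let $B\trianglelefteq Q$ be a finite-index abelian normal subgroup and $M<Q$ maximal. If $B\leq M$ then $[Q:M]\leq[Q:B]<\infty$. Otherwise $BM=Q$ by maximality; then $M\cap B$ is normalised by $M$ and centralised by the abelian group $B$, hence normal in $Q=BM$, and passing to $\bar Q=Q/(M\cap B)$ produces a split extension $\bar Q=\bar B\rtimes\bar M$ with $\bar B=B/(M\cap B)$ abelian and $\bar M$ maximal. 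Maximality of $\bar M$ forces $\bar B$ to have no proper non-trivial $\bar M$-invariant subgroup. But $\bar B$ is finitely generated abelian, so if it were infinite then $p\bar B$ would be a proper non-trivial subgroup invariant under every endomorphism, in particular under conjugation by $\bar M$, a contradiction. Hence $\bar B$ is finite and $[Q:M]=|\bar B|<\infty$, proving $Q\in\MF$ and therefore $G/N\in\MF$.

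Finally, $G$ is finitely generated, infinite, and has all proper quotients in $\MF$, so Proposition~\ref{prop:InfiniteIndexIffProdense} delivers the stated equivalence. I expect the main obstacle to be the structural input $R\leq N$: once a non-trivial normal subgroup is shown to contain a derived level rigid stabiliser, the problem collapses to the elementary fact that finitely generated virtually abelian groups lie in $\MF$, while the infiniteness of $G$ is a short separate argument.
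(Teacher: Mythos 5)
Your proposal is correct and follows essentially the same route as the paper: a non-trivial normal subgroup $N$ contains $\rist_G^{(1)}(n)$ for some $n$ by Lemma \ref{lemma:SubnormalContainsDerivedRist}, the quotient $G/\rist_G^{(1)}(n)$ is finitely generated virtually abelian by the branch condition, and such groups lie in $\MF$. The only difference is that you supply full proofs of the three facts the paper treats as known (the infiniteness of $G$, the passage from $\rist_G^{(1)}(v)\leq N$ at one vertex to the whole level via transitivity and normality, and the fact that finitely generated virtually abelian groups are in $\MF$), and these arguments are all sound.
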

\begin{proof}
It is clear from the definition that branch groups are infinite. By Lemma \ref{lemma:SubnormalContainsDerivedRist}, if $N\trianglelefteq G$ is a non-trivial normal subgroup of $G$, there exists $n\in \NN$ such that $\rist_G^{(1)}(n)\leq N$. Since being in $\MF$ clearly passes to quotients, it suffices to show that $G/\rist_G^{(1)}(n)$ is in $\MF$ for all $n\in \NN$. Since $G$ is a branch group, $\rist_G(n)$ is of finite index in $G$, which implies that $G/\rist_G^{(1)}(n)$ is a finitely generated virtually abelian group. Such groups are known to be in $\MF$.
\end{proof}

For weakly branch groups, however, we do not know whether all quotients must be in $\MF$ or not.

\section{Maximal subgroups of infinite index in branch groups}\label{section:MaxSubgroups}

In this section, we prove that the projections of proper prodense subgroups of branch and weakly branch groups are again proper prodense subgroups (Theorem \ref{thm:ProjectionsOfProperDenseAreProper}). We then use this result to prove that maximal subgroups of infinite index of branch or weakly branch groups are also branch or weakly branch groups, respectively (Theorems \ref{thm:MaxSubgroupsOfBranchAreBranch} and \ref{thm:MaxSubgroupsOfWeaklyBranchAreWeaklyBranch}).

We begin with a lemma concerning the projections of prodense subgroups.

\begin{lemma}\label{lemma:ProjectionsOfDenseAreDense}
Let $T$ be a rooted tree, $G$ be a weakly branch group acting on $T$, $H\leq G$ be a prodense subgroup of $G$ and $u\in T$ be any vertex of the tree. Then, $H_u$ is a prodense subgroup of $G_u$.
\end{lemma}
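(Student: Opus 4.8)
The goal is to verify the defining property of prodensity for $H_u$ inside $G_u$: for every non-trivial normal subgroup $M \trianglelefteq G_u$, one must show $H_u M = G_u$. The plan is to reduce this to the prodensity of $H$ in $G$ by \emph{lifting} $M$. Concretely, I would construct a non-trivial normal subgroup $N \trianglelefteq G$ satisfying $N \leq \St_G(u)$ and $\varphi_u(N) \leq M$. Granting such an $N$, the argument closes quickly: since $H$ is prodense, $HN = G$, so any $g \in \St_G(u)$ can be written $g = hn$ with $h \in H$, $n \in N$; as $n \in N \leq \St_G(u)$, we get $h = gn^{-1} \in H \cap \St_G(u) = \St_H(u)$, and applying the homomorphism $\varphi_u$ on $\St_G(u)$ yields $\varphi_u(g) = \varphi_u(h)\varphi_u(n) \in H_u M$. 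Since $g$ ranges over all of $\St_G(u)$ and $G_u = \varphi_u(\St_G(u))$, this gives $G_u = H_u M$.

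To build $N$, set $L := \varphi_u(\rist_G(u))$. Because $\rist_G(u) \trianglelefteq \St_G(u)$ and $\varphi_u$ is a homomorphism on $\St_G(u)$, the subgroup $L$ is normal in $G_u$; moreover $\varphi_u$ restricted to $\rist_G(u)$ is injective, since its elements are determined by their action on $T_u$. Let $P := \{r \in \rist_G(u) \mid \varphi_u(r) \in M\}$, so that $\varphi_u(P) = M \cap L$, and put $N := \langle P^G \rangle$, the normal closure of $P$ in $G$. Each generator $gpg^{-1}$ (with $p \in P$, $g \in G$) lies in $g\,\rist_G(u)\,g^{-1} = \rist_G(gu)$ and therefore fixes $u$: indeed, if $gu = u$ it lies in $\rist_G(u) \leq \St_G(u)$, while if $gu \neq u$ then $u \notin T_{gu}$ and it fixes $u$ by definition of the rigid stabiliser. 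Hence $N \leq \St_G(u)$. The same dichotomy controls the projection: when $gu \neq u$ the element $gpg^{-1}$ acts trivially on $T_u$, so $\varphi_u(gpg^{-1}) = 1 \in M$, and when $gu = u$ we have $\varphi_u(gpg^{-1}) = \varphi_u(g)\varphi_u(p)\varphi_u(g)^{-1} \in M$ by normality of $M$. Thus $\varphi_u(N) \leq M$, and $N$ is non-trivial as soon as $P$ is.

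The crux, and the step I expect to be the main obstacle, is precisely showing $P \neq \{1\}$, equivalently $M \cap L \neq \{1\}$. Suppose for contradiction that $M \cap L = \{1\}$. Since $M$ and $L$ are both normal in $G_u$, this forces $[M, L] = 1$. Choose a non-trivial $m \in M$; as a non-trivial automorphism of $T_u$ it moves some vertex $w \in T_u$, that is $mw \neq w$. Because $G$ is weakly branch and $T_w \subseteq T_u$, we have $\rist_G(w) \leq \rist_G(u)$ with $\rist_G(w) \neq \{1\}$, so there is a non-trivial $\ell \in \varphi_u(\rist_G(w)) \leq L$ whose support lies in $C_w$. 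Then $m\ell m^{-1}$ is supported in $C_{mw}$, which is disjoint from $C_w$ as $mw$ and $w$ are distinct vertices of the same level; hence $m\ell m^{-1}$ and $\ell^{-1}$ have disjoint supports, and $[m,\ell] = (m\ell m^{-1})\ell^{-1}$ acts as the non-trivial $\ell^{-1}$ on $C_w$, so $[m,\ell] \neq 1$. This contradicts $[M,L]=1$, establishing $M \cap L \neq \{1\}$ and thereby $N \neq \{1\}$, which completes the argument. Note that, pleasantly, this approach never requires $G_u$ itself to be weakly branch: the weak branch hypothesis on $G$ enters only through the non-triviality of $\rist_G(w)$ in the final commutator computation.
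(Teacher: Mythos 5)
Your proof is correct, but it takes a genuinely different route from the paper's. The paper argues ``top--down'': by Lemma \ref{lemma:SubnormalContainsDerivedRist} every non-trivial normal subgroup of $G_u$ (which is again weakly branch) contains some $\rist^{(1)}_{G_u}(n)$, and this in turn contains $\varphi_u(\rist^{(1)}_G(n+m))$, the image of a ready-made non-trivial normal subgroup of $G$ sitting inside $\St_G(u)$; so it suffices to check $H_u\rist^{(1)}_{G_u}(n)=G_u$, and the lifting step is then identical to yours. You instead work ``bottom--up'': for an arbitrary non-trivial normal $M\trianglelefteq G_u$ you manufacture the required normal subgroup $N\trianglelefteq G$ directly, as the normal closure of the $\varphi_u$-preimage of $M\cap\varphi_u(\rist_G(u))$ in $\rist_G(u)$, and the real work is your commutator argument showing $M\cap\varphi_u(\rist_G(u))\ne 1$. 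That argument is sound (it is the same micro-supported-action principle underlying Lemmas \ref{lemma:SubnormalContainsDerivedRist} and \ref{lemma:RistNotSolvable}, re-proved locally), and it buys you something the paper's proof does not: as you note, you never need to verify that $G_u$ is itself weakly branch, nor do you invoke the non-solvability of rigid stabilisers. The trade-off is that the paper's version produces the uniform statement $H_u\rist^{(1)}_{G_u}(n)=G_u$ for all $n$, which is the form actually reused later (e.g.\ in the proof of Theorem \ref{thm:ProjectionsOfProperDenseAreProper}), whereas your construction of $N$ depends on the particular $M$.
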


\begin{proof}
It suffices to show that $H_u\rist_{G_u}^{(1)}(n) = G_u$ for all $n\in \NN$. Indeed, since $G_u$ is weakly branch, for every non-trivial normal subgroup $N\trianglelefteq G_u$, there exists $n\in \NN$ such that $\rist^{(1)}_{G_u}(n) \leq N$ by Lemma \ref{lemma:SubnormalContainsDerivedRist}. Furthermore, by Lemma \ref{lemma:RistNotSolvable}, $\rist^{(1)}_{G_u}(n)$ is non-trivial for every $n\in \NN$.

Let us suppose that $u$ is on level $m$, and let $n\in \NN$ be any natural number. Clearly, $\rist_G(m+n) \leq \St_G(m)\leq \St_G(u)$, and 
\[\varphi_u(\rist_G(n+m)) \leq \rist_{G_u}(n).\]
Therefore, $\varphi_u(\rist^{(1)}_G(n+m)) \leq \rist^{(1)}_{G_u}(n)$.

As $\rist^{(1)}_G(n+m)$ is a non-trivial normal subgroup of $G$, we have by hypothesis
\[H\rist^{(1)}_G(n+m) = G.\]
By definition, for every $g\in G_u$, there exists $\tilde{g} \in \St_G(u)$ such that $\varphi_u(\tilde{g}) = g$. Since $H\rist^{(1)}_G(n+m)=G$, there exist $h\in H$ and $r\in \rist^{(1)}_G(n+m)$ such that $hr = \tilde{g}$. Since $\tilde{g}, r\in \St_G(u)$, we must have $h\in \St_G(u)$. Therefore, we get
\[\varphi_u(h)\varphi_u(r) = g,\]
with $\varphi_u(h)\in H_u$ and $\varphi_u(r) \in \rist^{(1)}_{G_u}(n)$. This shows that $H_u\rist^{(1)}_{G_u}(n) = G_u$.
\end{proof}

As we have seen above, for weakly branch groups, the projection of any prodense subgroup to a vertex is still a prodense subgroup. However, to determine whether a group belongs to $\MF$ or not, we need to study \emph{proper} prodense subgroups. The next theorem tells us that the projections of proper prodense subgroups stay proper.

%To prove it, however, we first need to prove a simple lemma regarding the support of groups acting on rooted trees.

%\begin{lemma}\label{lemma:SupportIsOpen}
%Let $g\in \Aut(T)$ be an automorphism of the rooted tree $T$, and let
%\[\text{Supp}(g)=\left\{\xi\in \partial T \mid g\xi\ne \xi\right\}\]
%be the complement of the set of fixed points of the action of $G$ on the boundary of the tree $\partial T$ (we will call this set the support of $G$, even though we would need to take the closure of this set in order to obtain the classical notion of support). Then, $\text{Supp}(g)$ is an open set of $\partial T$ with respect to the natural topology on $\partial T$.
%\end{lemma}
%\begin{proof}
%If $\text{Supp}(g)=\emptyset$, then the result is trivial. Otherwise, let $\xi\in \text{Supp}(g)$ be an arbitrary element of the support. We will show that there exists an open set $U\subseteq \partial T$ such that $\xi\in U \subseteq \text{Supp}(g)$.

%Since $\xi \in \text{Supp}(g)$, we have $g\xi\ne \xi$. Let $u\in T$ be the smallest prefix of $\xi$ such that $gu\ne u$. Such a prefix exists by our assumptions on $\xi$. Then, since the action of $g$ preserves the structure of the tree, no vertex of $T$ below $u$ can be fixed by $g$. This implies that $C_u\subseteq \text{Supp}(g)$, where
%\[C_u = \left\{\zeta \in \partial T \mid u\leq \zeta\right\}.\]
%As $C_u$ is an open set of $\partial T$, this concludes the proof.
%\end{proof}

\begin{thm}\label{thm:ProjectionsOfProperDenseAreProper}
Let $G$ be a weakly branch group acting on a rooted tree $T$, $H\leq G$ be a prodense subgroup and $u\in T$ be any vertex. Then, $H$ is a proper subgroup of $G$ if and only if $H_u$ is a proper subgroup of $G_u$.
\end{thm}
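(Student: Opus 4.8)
The plan is to prove the two implications separately; the forward one is immediate and the reverse one carries all the content.

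\emph{Easy direction.} If $H=G$, then $\St_H(u)=\St_G(u)$, so $H_u=\varphi_u(\St_H(u))=\varphi_u(\St_G(u))=G_u$. Contrapositively, if $H_u$ is a proper subgroup of $G_u$, then $H$ is a proper subgroup of $G$. For the converse I prove the contrapositive: assuming $H_u=G_u$, I show $H=G$. Let $m=|u|$. By prodensity it suffices to exhibit a single non-trivial normal subgroup $N\trianglelefteq G$ contained in $H$, since then $G=HN=H$. The subgroup I target is $N=\prod_{v\in L_m}\rist_G^{(1)}(v)$, which is normal in $G$ (conjugation permutes the factors according to the action on $L_m$) and non-trivial by Lemma \ref{lemma:RistNotSolvable}.

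\emph{Spreading via prodensity.} First, prodensity forces $H$ to act spherically transitively: for each $n$ the subgroup $\rist_G(n)\le\St_G(n)$ acts trivially on $L_n$ and is a non-trivial normal subgroup, so $H\rist_G(n)=G$ shows that $H$ and $G$ induce the same permutations of $L_n$. This has two consequences. (i) Since $H$ is transitive on $L_m$, for each $v\in L_m$ there is $h\in H$ with $hu=v$; as $h$ conjugates $\St_H(u)$ onto $\St_H(v)$ while intertwining $\varphi_u$ with $\varphi_v$, the equality $H_u=G_u$ propagates to $H_v=G_v$ for every $v\in L_m$. (ii) For the same reason, once I know $\rist_G^{(1)}(u)\le H$ for the single vertex $u$, conjugating by these elements $h\in H$ gives $\rist_G^{(1)}(v)=h\rist_G^{(1)}(u)h^{-1}\le hHh^{-1}=H$ for all $v\in L_m$, since $h\in H$; hence $N\le H$ and we are done. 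Thus everything reduces to the single containment $\rist_G^{(1)}(u)\le H$.

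\emph{The crux.} It remains to produce $\rist_G^{(1)}(u)\le H$ from the relations $H_v=G_v$. Given $c\in\rist_G(u)$, the hypothesis $H_u=G_u$ provides $h\in\St_H(u)$ with $\varphi_u(h)=\varphi_u(c)$; then $h=cs$ where $s=c^{-1}h$ lies in $\ker(\varphi_u|_{\St_G(u)})$, so $\text{Supp}(s)\cap C_u=\emptyset$. For $c,c'\in\rist_G(u)$ with lifts $h=cs$ and $h'=c's'$, the supports of $c,c'$ (inside $C_u$) are disjoint from those of $s,s'$ (outside $C_u$), so the commutator splits as $[h,h']=[c,c']\,[s,s']\in H$, with $[c,c']\in\rist_G^{(1)}(u)$ supported on $C_u$ and $[s,s']$ supported off $C_u$. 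Thus $H$ contains each generator $[c,c']$ of $\rist_G^{(1)}(u)$ up to the off-$C_u$ factor $[s,s']$.

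The decisive point, and the main obstacle, is to cancel this off-$C_u$ factor. Since $[s,s']$ is supported on the sibling cylinders, where $H$ is also full by (i), one can project $[s,s']$ to the relevant sibling vertex and feed its inverse back through the surjectivity $H_v=G_v$ to produce an element of $H$ that annihilates $[s,s']$ and pushes $[h,h']$ into $H\cap\rist_G(u)$. The difficulty is that this correction introduces a residual contribution on $C_u$ coming from the uncontrolled projection of the new lift, which must itself be controlled. I expect to resolve this by exploiting the freedom in the choice of lifts, which is well defined only modulo $\ker\varphi_u\cap H$, together with prodensity, organised as a bookkeeping over the derived rigid stabiliser so that the residual terms ultimately also lie in $\rist_G^{(1)}(u)\cap H$. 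Granting $\rist_G^{(1)}(u)\le H$, the spreading step (ii) gives $N\le H$, and prodensity yields $H=G$, completing the contrapositive and hence the theorem.
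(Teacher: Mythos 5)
Your reduction of the theorem to the single containment $\rist_G^{(1)}(u)\le H$ is logically valid, and your ``spreading'' steps (transitivity of $H$ on levels via $H\rist_G(n)=G$, propagation of $H_u=G_u$ to all of $L_m$, and conjugation of $\rist_G^{(1)}(u)$ to its siblings) are all correct and also appear, in essentially the same form, in the paper's argument. But the step you yourself flag as ``the decisive point'' is not proved, and it is not a routine bookkeeping matter: it is the entire content of the theorem. Your scheme produces, for $c,c'\in\rist_G(u)$, an element $[h,h']=[c,c'][s,s']\in H$ with $[s,s']$ supported off $C_u$, and you propose to cancel $[s,s']$ by lifting its inverse through the surjections $H_v=G_v$ at sibling vertices. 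Each such lift has an uncontrolled component back on $C_u$ (and on the other siblings), so every correction generates a new residue of exactly the same kind; there is no finiteness in sight that would make the iteration terminate, and ``organised as a bookkeeping over the derived rigid stabiliser'' is a hope, not an argument.

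There is also a structural reason to doubt that any element-by-element cancellation can succeed. The paper's proof shows that if $H\ne G$ and $H_u=G_u$, then $\rist_H(v)=\rist_G(v)\cap H$ is \emph{normal in $G$} for every first-level vertex $v$, hence trivial by prodensity (since $H\rist_H(1)=H\ne G$). So in the situation you are trying to rule out, $H$ contains \emph{no} non-trivial element supported in a single first-level cylinder; producing even one such element is exactly as hard as proving $H=G$, which is why the residue never goes away. The paper instead turns this obstruction into a tool: it takes a minimal set $U\subseteq L_1$ with $\rist_H(U)\cap\St_H(1)\ne 1$ (necessarily $|U|\ge 2$), shows that $\rist_H(L_1\setminus\{u\})\cap\St_H(1)$ and $\rist_H(U)\cap\St_H(1)$ commute, and deduces via Corollary \ref{cor:CommutingSubnormalHaveVertexTrivialAction} that $\ker(\varphi_u|_{\St_H(|v|)})\le\ker(\varphi_v|_{\St_H(|v|)})$ for some vertex $v$ outside $T_u$. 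This yields a surjective ``transport'' homomorphism $\alpha\colon\varphi_u(\St_H(|v|))\to\varphi_v(\St_H(|v|))$, and the contradiction is then extracted by writing a non-trivial $r\in\rist_G(v)$ as $hg$ with $h\in H$ and $g$ in a deep derived rigid stabiliser, and showing that $\alpha$ forces $\varphi_v(r)$ into a stabiliser it was chosen to avoid. You would need to supply an argument of comparable substance to close your crux; as written, the proposal is an accurate map of where the difficulty lies rather than a proof.
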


\begin{proof}
If $H_u\ne G_u$, then clearly $H\ne G$. Let us now assume that $H\ne G$ and let us show that $H_u \ne G_u$. It suffices to prove this fact for $u\in L_1$, where $L_1$ is the first level of the rooted tree $T$. Indeed, if this property holds on the first level of the rooted tree, we can then use induction to prove it for $u$ on any level thanks to Lemma \ref{lemma:ProjectionsOfDenseAreDense}.

Therefore, let $u\in L_1$ be a vertex on the first level of the tree and let us assume for the sake of contradiction that $H\ne G$ but $H_u=G_u$.

The rigid stabiliser of the vertex $u$ in $H$, $\rist_H(u) = \rist_G(u) \cap H$, is a normal subgroup of $\St_H(u)$. Since $H_u=G_u$, it is also a normal subgroup of $\St_G(u)$. Indeed, for any $g\in \St_G(u)$, there exists $h\in \St_H(u)$ such that $\varphi_u(g) = \varphi_u(h)$. Hence, since any $r\in \rist_H(u)$ acts trivially outside of $T_u$, the subtree rooted at $u$, we have
\[grg^{-1} = hrh^{-1} \in \rist_H(u).\]
Since $\St_G(1) \leq \St_G(u)$, we have that $\rist_H(u) \trianglelefteq \St_G(1)$.

Now, since $G$ acts transitively on $L_1$ and since $H\St_G(1) = G$, we conclude that $H$ must also act transitively on $L_1$. Therefore, for any $v\in L_1$, there exists $h\in H$ such that $\St_H(v) = h\St_H(u)h^{-1}$. Hence,
\[H_v = \varphi_v(\St_H(v)) = h_u G_u h_u^{-1} = G_v\]
for all $v\in L_1$. It follows that $\rist_H(v) \trianglelefteq \St_G(1)$ for all $v\in X$. Therefore,
\[\rist_H(1) = \prod_{v\in X}\rist_H(v) \trianglelefteq \St_G(1).\]
Since $\rist_H(1) \trianglelefteq H$ and $H\St_G(1) = G$, we conclude that
\[\rist_H(1) \trianglelefteq G.\]
This implies that $\rist_H(1) = \{1\}$. Indeed, otherwise, by hypothesis, we would have $H\rist_H(1) = G$, which is absurd since $H\rist_H(1) = H$ and $H\ne G$.

Let $U\subset L_1$ be a subset of vertices of the first level such that $\rist_H(U)\cap \St_H(1) \ne \{1\}$ and whose cardinality is minimal for this property. Note that such a set must exist, since $\rist_H(L_1)\cap \St_H(1) = \St_H(1)$, which is clearly non-trivial. By transitivity, we can assume that $u\in U$. As we have seen above, $\rist_H(u)=\{1\}$, which implies that $2\leq |U| \leq |L_1|$.

We are going to show that there must exist a surjective homomorphism
\[\alpha\colon \varphi_u(\St_H(|v|)) \rightarrow \varphi_v(\St_H(|v|))\]
for some $v\in T \setminus T_u$.

Let us consider the (possibly trivial) subgroup $\rist_H(L_1\setminus \{u\}) \cap \St_H(1)$ of elements of $\St_H(1)$ that are sent to the identity by $\varphi_u$. We have
\begin{align*}
\left[\rist_H(L_1\setminus \{u\}) \cap \St_H(1), \rist_H(U)\cap \St_H(1)\right]&\leq \rist_H(L_1\setminus \{u\} \cap U) \cap \St_H(1) \\
&=\rist_H(U\setminus \{u\})\cap \St_H(1) \\
&=1,
\end{align*}
where the last equality is due to the minimality of $U$. We conclude that $\rist_H(L_1\setminus \{u\}) \cap \St_H(1)$ and $\rist_H(U)\cap \St_H(1)$ commute.

Now, let us fix some $w\in U\setminus \{u\}$. Notice that such a $w$ exists, since $|U|\geq 2$. Since $\varphi_w|_{\St_H(w)}$ is a homomorphism, we must have that $\varphi_w\left(\rist_H(L_1\setminus \{u\}) \cap \St_H(1)\right)$ and $\varphi_w\left(\rist_H(U)\cap \St_H(1)\right)$ commute. Notice that both of these subgroups are normal in $\varphi_w(\St_H(1))$, since $\rist_H(L_1\setminus \{u\}) \cap \St_H(1)$ and $\rist_H(U)\cap \St_H(1)$ are normal subgroups of $\St_H(1)$. Now, since $\St_H(1)$ is normal in $\St_H(w)$, and since $\varphi_w(\St_H(w)) = G_w$, we get that $\varphi_w(\St_H(1))$ is a normal subgroup of $G_w$. Thus, $\varphi_w\left(\rist_H(L_1\setminus \{u\}) \cap \St_H(1)\right)$ and $\varphi_w\left(\rist_H(U)\cap \St_H(1)\right)$ are both $2$-subnormal subgroups of $G_w$.

By the minimality of the size of $U$, we know that $\varphi_w\left(\rist_H(U)\cap \St_H(1)\right)$ is non-trivial. Therefore, it follows from Corollary \ref{cor:CommutingSubnormalHaveVertexTrivialAction} that there exists some $v\in T_w$ (possibly equal to $w$) such that $\rist_H(L_1\setminus \{u\}) \cap \St_H(1) \leq \St_H(v)$ and
\[\varphi_v\left(\rist_H(L_1\setminus \{u\}) \cap \St_H(1)\right)=1.\]
%It follows from Lemmas 2.7.4 and 2.7.6 of my thesis that two commuting subnormal subgroups of a branch groups must have disjoint support, in the sense that if one subgroup moves a vertex, the other must stabilise it. Now, by Lemma \ref{lemma:SupportIsOpen}, the support of a subgroup is an open set. By the minimality of the size of $U$, we know that $\varphi_w\left(\rist_H(U)\cap \St_H(1)\right)$ is non-trivial. Therefore, it must have non-empty support. This means that $\varphi_w\left(\rist_H(L_1\setminus \{u\}) \cap \St_H(1)\right)$ must act trivially on a non-empty open set of $\partial T_w$. Consequently, there must exist some $v\in T_w$ (possibly equal to $w$) such that $\rist_H(L_1\setminus \{u\}) \cap \St_H(1) \leq \St_H(v)$ and $\varphi_v\left(\rist_H(L_1\setminus \{u\}) \cap \St_H(1)\right)=1$.

This implies that the kernel of the map
\[\varphi_u|_{\St_H(|v|)} \colon \St_H(|v|) \rightarrow \varphi_u(\St_H(|v|))\] is contained in the kernel of the map
\[\varphi_v|_{\St_H(|v|)} \colon \St_H(|v|) \rightarrow \varphi_v(\St_H(|v|)).\]
Therefore, we have a well-defined homomorphism
\begin{align*}
\alpha \colon \varphi_u(\St_H(|v|))&\rightarrow \varphi_v(\St_H(|v|))\\
g &\mapsto \varphi_v(\tilde{g})
\end{align*}
where $\tilde{g}\in \St_H(|v|)$ is any element such that $\varphi_u(\tilde{g}) = g$. Since, for any $\tilde{g}\in \St_H(|v|)$, we have $\alpha(\varphi_u(\tilde{g}))=\varphi_v(\tilde{g})$, we see that this homomorphism is surjective.

We will now derive a contradiction from the existence of this homomorphism and the fact that $H$ is prodense in $G$.

Let $r\in \rist_G(v)$ be any non-trivial element of the rigid stabiliser of $v$. Such an element exists since $G$ is a weakly branch group. Notice that since we have shown that $\rist_H(w)=1$ for all $w\in L_1$, we must have that $r\notin H$.

Let us consider $\varphi_v(r)\in \varphi_v(\St_G(|v|))$. Since $r$ is non-trivial and in the rigid stabiliser of $v$, we have that $\varphi_v(r)$ is non-trivial. Therefore, there must exist $n\in \NN$ such that $\varphi_v(r)\notin \St_{G_v}(n)$. Let us write
\[K=\alpha^{-1}(\St_{\varphi_v(\St_H(|v|))}(n)) \leq \varphi_u(\St_H(|v|)).\]
As $\St_{\varphi_v(\St_H(|v|))}(n)$ is a normal subgroup of finite index of $\varphi_v(\St_H(|v|))$, we have that $K$ is a normal subgroup of finite index of $\varphi_u(\St_H(|v|))$. Now, we know that $\St_H(|v|)$ is a normal subgroup of finite index in $\St_H(u)$. Therefore, we have that $\varphi_u(\St_H(|v|))$ is a normal subgroup of finite index of $\varphi_u(\St_H(u)) = G_u$. This means that $K$ is a subgroup of finite index of $G_u$. Let $L$ be the normal core of $K$ in $G_u$. Then, $L$ is a normal subgroup of $G_u$ of finite index. It follows from Lemma \ref{lemma:SubnormalContainsDerivedRist} and the fact that $G_u$ is a weakly branch group that there exists some $k\in \NN$ such that $\rist^{(1)}_{G_u}(k) \leq L$. Let us define $m$ as the maximum between $k$ and $|v|+n-1$.

Let us consider $\rist^{(1)}_G(m+1)$. We know that this is a normal subgroup of $G$. Thus, since $H$ is prodense, we have that $H\rist^{(1)}_G(m+1)=G$. In particular, this means there exist $h\in H$ and $g\in \rist^{(1)}_G(m+1)$ such that $hg=r$, where $r$ was defined above. Since $r\in \St_G(|v|)$ and since $\rist^{(1)}_G(m+1) \leq \St_G(m+1) \leq \St_G(|v|)$, we must have $h\in \St_G(|v|)$, and so $h\in \St_H(|v|)$.

Furthermore, since $\varphi_u(r)=1$, we find that $\varphi_u(h)=\varphi_u(g)^{-1}$. As $g\in \rist^{(1)}_G(m+1)$, we must have $\varphi_u(g)\in \rist^{(1)}_{G_u}(m) \leq L$. Therefore, $\varphi_u(h)\in L$.

Let us now consider $\varphi_v(r) = \varphi_v(h)\varphi_v(g)$. Since $h\in \St_H(|v|)$, we must have
\[\varphi_v(h) = \alpha(\varphi_u(h)) \in \alpha(L) \leq \alpha(K) \leq \St_{G_v}(n).\]
Since $g\in \rist_G(m+1)\leq \St_G(m+1) \leq \St_G(|v|+n)$, we must have $\varphi_v(g)\leq \St_{G_v}(n)$. Therefore, we get $\varphi_v(r)\in \St_{G_v}(n)$, which contradicts our choice of $r$ and $n$.

Since assuming that $H_u=G_u$ led us to a contradiction, we must conclude that $H_u\ne G_u$.

\end{proof}

We have just shown that the projections of proper prodense subgroups are proper prodense subgroups. We will now see that if we have a maximal subgroup of infinite index of a weakly branch group, then its projections are also maximal subgroups of infinite index.

Recall from Proposition \ref{prop:QuotientsOfBranchGroupsAreinMF} that for branch groups, every proper quotient is always in $\MF$.

\begin{thm}\label{thm:ProjectionsOFMaximalAreMaximal}
Let $G$ be a weakly branch group acting on a rooted tree $T$, and suppose that every proper quotient of $G$ is in $\MF$. If $M<G$ is a maximal subgroup of $G$ of infinite index, then $M_v$ is a maximal subgroup of infinite index of $G_v$ for every $v\in T$.
\end{thm}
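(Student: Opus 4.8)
The plan is to first observe that $M$ is prodense, then transport prodensity and properness to $M_v$ using the two results already established, and finally upgrade "proper prodense" to "maximal of infinite index" by a direct argument inside $G_v$ organised around the rigid stabiliser of $v$. Throughout, the hypothesis that proper quotients of $G$ lie in $\MF$ is used only once, at the very start.

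First I would show that $M$ is prodense. Arguing exactly as in the $(\Leftarrow)$ direction of Proposition \ref{prop:InfiniteIndexIffProdense}: for a non-trivial normal $N\trianglelefteq G$, either $MN=G$, or maximality forces $N\leq M$, in which case $M/N$ is a maximal subgroup of infinite index of the proper quotient $G/N$, contradicting $G/N\in\MF$. Hence $MN=G$ for every non-trivial normal $N$, so $M$ is prodense. By Lemma \ref{lemma:ProjectionsOfDenseAreDense}, $M_v$ is then prodense in $G_v$, and by Theorem \ref{thm:ProjectionsOfProperDenseAreProper} it is proper. It therefore remains only to prove that a proper prodense $M_v$ is maximal of infinite index in $G_v$.

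For maximality, suppose $M_v\subsetneq N\leq G_v$; I want to force $N=G_v$. Set $R=\varphi_v(\rist_G(v))$. Since $\rist_G(v)$ acts faithfully on $T_v$, the map $\varphi_v$ is injective on it, so $R\cong\rist_G(v)$ is non-trivial; and since $\rist_G(v)\trianglelefteq\St_G(v)$, we get $R\trianglelefteq G_v$. Prodensity of $M_v$ then yields $M_vR=G_v$, so writing any $y\in N\setminus M_v$ as $y=m\rho$ with $m\in M_v$, $\rho\in R$ produces $\rho=m^{-1}y\in (N\cap R)\setminus M_v$. Lift $\rho$ to an element $s_0\in\rist_G(v)$ with $\varphi_v(s_0)=\rho$. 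Then $K=\langle M,s_0\rangle\supsetneq M$ (as $s_0\notin M$, since otherwise $\rho\in M_v$), so maximality of $M$ gives $K=G$. The crux is now to show that, despite $K=G$, the projection $\varphi_v(\St_K(v))$ lands inside $N$; the decisive point is that $s_0$ was chosen inside $\rist_G(v)$, so its $M$-conjugates have pairwise disjoint supports. Concretely, put $E=\langle m s_0 m^{-1}: m\in M\rangle$; each generator lies in $\rist_G(m\cdot v)$, so $E\leq\rist_G(L_{|v|})\leq\St_G(|v|)$, and $E$ is normalised by $M$, whence $K=ME$ with $E\trianglelefteq K$. By Dedekind's modular law $\St_K(v)=K\cap\St_G(v)=\St_M(v)\,E$, so $\varphi_v(\St_K(v))=M_v\,\varphi_v(E)$. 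Finally $\varphi_v(E)$ is generated by the $\varphi_v(m s_0 m^{-1})$, each of which lies in $N$: if $m\cdot v\neq v$ the conjugate has support in $C_{m\cdot v}$, disjoint from $C_v$, so its projection at $v$ is trivial, while if $m\cdot v=v$ it equals $\varphi_v(m)\rho\varphi_v(m)^{-1}$, a product of elements of the subgroup $N$ since $\varphi_v(m)\in M_v\leq N$. Thus $\varphi_v(\St_K(v))\leq N$, and as $K=G$ this reads $G_v=\varphi_v(\St_G(v))\leq N$, giving $N=G_v$. Hence $M_v$ is maximal.

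For infinite index, note that $G_v\geq R\cong\rist_G(v)$ is infinite by Lemma \ref{lemma:RistNotSolvable}, and a proper prodense subgroup of an infinite group cannot have finite index: its normal core would be a finite-index, hence non-trivial, normal subgroup of $G_v$ contained in $M_v$, contradicting prodensity together with $M_v\neq G_v$. Therefore $M_v$ is maximal of infinite index. I expect the main obstacle to be precisely the crux above, controlling $\varphi_v(\St_K(v))$ once $K=G$; the idea that makes it tractable is to produce the new generator $s_0$ inside $\rist_G(v)$ (which is possible because $R$ is a non-trivial normal subgroup of $G_v$ and $M_v$ is prodense), so that all conjugates of $s_0$ by elements moving $v$ are invisible to $\varphi_v$.
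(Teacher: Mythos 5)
Your proposal is correct, and while its skeleton matches the paper's (establish that $M$ is prodense, invoke Lemma \ref{lemma:ProjectionsOfDenseAreDense} and Theorem \ref{thm:ProjectionsOfProperDenseAreProper} to get that $M_v$ is proper prodense, then adjoin a single new element to $M$ and exploit maximality), the crux is handled by a genuinely different mechanism. The paper starts from an arbitrary witness $g$ of non-maximality, lifts it to $\tilde{g}\in\St_G(|v|)$, and then \emph{corrects} the lift at every sibling vertex $w\in L_{|v|}\setminus\{v\}$ -- using prodensity of each $M_w$ against the normal subgroup $\varphi_w(\rist_G(w))$ of $G_w$ -- so that the corrected element $\hat{g}$ projects into $M_w$ at each $w\ne v$; it then verifies by an explicit word-by-word computation on products $\mu_1\hat{g}^{i_1}\mu_1^{-1}\cdots\mu_{n+1}$ that $\varphi_v(\St_{\langle M,\hat g\rangle}(v))=\langle M_v,g\rangle$. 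You instead move the prodensity argument to the vertex $v$ itself: using $M_vR=G_v$ with $R=\varphi_v(\rist_G(v))$ you replace the witness by one lying in $R\cap N$, so that its lift $s_0$ can be taken inside $\rist_G(v)$; the sibling corrections then become unnecessary because every $M$-conjugate of $s_0$ by an element moving $v$ projects trivially at $v$, and the word computation is replaced by the cleaner decomposition $K=ME$ with $E$ the $M$-normal closure of $s_0$, plus Dedekind's modular law to get $\St_K(v)=\St_M(v)E$. Both routes lean on the same two preliminary results; the paper's yields the slightly sharper statement $H_v=\langle M_v,g\rangle$ for an arbitrary $g$, whereas yours is shorter and isolates more transparently why choosing the new generator in a rigid stabiliser is the decisive idea. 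I checked the details of your argument (non-triviality and normality of $R$ in $G_v$, injectivity of $\varphi_v$ on $\rist_G(v)$, normality of $E$ in $K$, the modular-law step, and the finite-index argument via the normal core) and they all hold.
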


\begin{proof}
Since $M$ is a maximal subgroup of infinite index and since every proper quotient of $G$ is in $\MF$, we have that $M$ is a proper prodense subgroup of $G$. Then, by Theorem \ref{thm:ProjectionsOfProperDenseAreProper}, we know that $M_v$ is a proper prodense subgroup of $G_v$. It thus only remains to show that it is maximal.

For the sake of contradiction, let us assume that $M_v$ is not maximal in $G_v$. Then, there exists $g\in G_v$ such that
\[M_v \lneq \langle M_v, g \rangle \lneq G_v.\]
By the definition of $G_v$, there exists $\tilde{g}\in \St_G(v)$ such that $\varphi_v(\tilde{g})=g$. Since $M$ is prodense in $G$, we have that $M\St_G(|v|)=G$. Therefore, there exists $\tilde{m}\in M$ and $\tilde{s}\in \St_G(|v|)$ such that $\tilde{g}=\tilde{m}\tilde{s}$. Since both $\tilde{g}$ and $\tilde{s}$ belong to $\St_G(v)$, we must have that $\tilde{m}\in \St_G(v)$. Therefore, we have
\[\varphi_v(\tilde{s}) = \varphi_v(\tilde{m}^{-1}\tilde{g}) = \varphi_v(\tilde{m}^{-1})g.\]
Since $\tilde{m}\in M$, we have $\varphi_v(\tilde{m}^{-1})\in M_v$. We conclude that $\langle M_v, g \rangle = \langle M_v,\varphi_v(\tilde{m}^{-1})g \rangle $. Thus, replacing $g$ by $\varphi_v(\tilde{m}^{-1})g$ and $\tilde{g}$ by $\tilde{s}$ if necessary, we can assume that $\tilde{g}\in \St_G(|v|)$.

Now, let $w\in L_{|v|}$ be any vertex of level $|v|$ different from $v$. Since $\rist_G(w)$ is a normal subgroup of $\St_G(w)$, we have that $\varphi_w(\rist_G(w))$ is a normal subgroup of $G_w$. By Theorem \ref{thm:ProjectionsOfProperDenseAreProper}, we know that $M_w$ is a proper prodense subgroup of $G_w$. Therefore, we have that $M_w\varphi_w(\rist_G(w)) = G_w$. Consequently, there exist $m_w \in M_w$ and $r_w\in \rist_G(w)$ such that $m_w\varphi_w(r_w) = \varphi_w(\tilde{g})$.

Let
\[\hat{g} = \tilde{g}\prod_{w\in L_{|v|}\setminus \{v\}}r_w^{-1} \in \St_G(|v|).\]
Then, for every $w\in L_{|v|}\setminus \{v\}$, we have
\begin{align*}
\varphi_w(\hat{g}) &= \varphi_w(\tilde{g})\prod_{w'\in X\setminus \{v\}}\varphi_{w'}(r_{w'})^{-1} \\
&= \varphi_w(\tilde{g})\varphi_w(r_w)^{-1} \\
&= m_w \in M_w
\end{align*}
where the second equality comes from the fact that $r_{w'}\in \rist_G(w')$, so $\varphi_w(r_{w'})=1$ if $w\ne w'$. Furthermore, by a similar computation, we have that $\varphi_v(\hat{g}) = g$.

Since $g\notin M_v$ by construction, we must have that $\hat{g}\notin M$. Let us write $H=\langle M, \hat{g} \rangle$. By the maximality of $M$, we must have that $H = G$. However, we will now prove that
\[H_v = \langle M_v, g \rangle \lneq G_v, \]
which will contradict the fact that $H = G$.

Let $h\in \St_H(v)$ be an arbitrary element of $H$ stabilising $v$. Since $h\in H = \langle M, \hat{g} \rangle$, there exist $n\in \NN$, $i_1,\dots, i_n \in \ZZ$ and $\mu_1, \dots, \mu_{n+1} \in M$ such that 
\[h = \mu_1 \hat{g}^{i_1} \mu_1^{-1} \mu_2 \hat{g}^{i_2}\mu_2^{-1} \dots \mu_n \hat{g}^{i_n} \mu_{n}^{-1} \mu_{n+1}.\]
Notice that since $\hat{g}\in \St_G(|v|)$, we must have that $\mu_{j}g^{i_j}\mu_{j}^{-1} \in \St_G(|v|)$ for all $1\leq j \leq n$, and since $h\in \St_G(v)$, this implies that $\mu_{n+1}\in \St_M(v)$.

We will now see that for all $1\leq j \leq n$, we must have
\[\varphi_v(\mu_j\hat{g}^{i_j} \mu_j^{-1}) \in \langle M_v, g \rangle.\]
Indeed, if $\mu_j\in \St_G(v)$, then we have
\[\varphi_v(\mu_j\hat{g}^{i_j} \mu_j^{-1}) = \varphi_v(\mu_j)g^{i_j}\varphi_v(\mu_j)^{-1} \in \langle M_v, g \rangle.\]
If $\mu_j\notin \St_G(v)$, then we have $\mu_j^{-1}\cdot v \ne v$. Let us set $w=\mu_j^{-1}\cdot v$. We have that 
\[\varphi_v(\mu_j\hat{g}^{i_j} \mu_j^{-1}) = \varphi_{w}(\mu_j)m_w^{i_j}\varphi_w(\mu_j)^{-1}.\]
Now, since $m_w^{i_j}\in M_w$, there exists some $\nu \in \St_M(w)$ such that $\varphi_w(\nu) = m_w^{i_j}$. It follows that
\[\varphi_v(\mu_j\hat{g}^{i_j} \mu_j^{-1}) =\varphi_v(\mu_j\nu \mu_j^{-1}).\]
Since $\nu\in \St_M(w)$ and since $\mu_j\in M$, we have that $\mu_j\nu \mu_j^{-1} \in \St_M(\mu_j\cdot w) = \St_M(v)$. Therefore, we conclude that $\varphi_v(\mu_j\hat{g}^{i_j} \mu_j^{-1})\in M_v$.

Finally, since $\mu_{n+1}\in \St_M(v)$, we have that $\varphi_v(\mu_{n+1})\in M_v$. Thus, we see that $\varphi_v(h) \in \langle M_v, g \rangle$. This concludes the proof.
\end{proof}

Using the above results, we can show that every maximal subgroup of infinite index of a branch group is again a branch group. For this, we first need a lemma.

\begin{lemma}\label{lemma:RistMax}
Let $G$ be a weakly branch group acting on a rooted tree $T$ such that every proper quotient of $G$ is in $\MF$, and let $M<G$ be a maximal subgroup of infinite index of $G$. Then, for all $n\in \NN$, we have $\rist_M(n)=\rist_G(n)\cap M$.
\end{lemma}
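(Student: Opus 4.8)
The plan is to prove the nontrivial inclusion $\rist_G(n)\cap M\subseteq \rist_M(n)$, the reverse inclusion being immediate since each generator $\rist_M(v)=\rist_G(v)\cap M$ of $\rist_M(n)$ already lies in $\rist_G(n)\cap M$. Using the disjoint-support decomposition $\rist_G(n)\cong\prod_{v\in L_n}\rist_G(v)$, I would write an element $g\in\rist_G(n)\cap M$ as $g=\prod_{v\in L_n}g_v$ with $g_v\in\rist_G(v)$, and reduce to showing that each factor $g_u$ lies in $M$; then $g_u\in\rist_G(u)\cap M=\rist_M(u)$ and hence $g\in\rist_M(n)$. When $|L_n|\le 1$ there is a single vertex and the statement holds by definition, so I may assume $|L_n|\ge 2$. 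The key preliminary observation is that $\varphi_u(g_u)\in M_u$ for every $u\in L_n$: indeed $g\in M\cap\St_G(u)$, so $\varphi_u(g)\in M_u$, while the factors $g_v$ with $v\ne u$ act trivially on $T_u$, whence $\varphi_u(g)=\varphi_u(g_u)$.

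Next I would fix $u\in L_n$ and suppose, for contradiction, that $g_u\notin M$. By maximality of $M$ this gives $\langle M,g_u\rangle=G$. Choosing any $w\in L_n\setminus\{u\}$, my goal is to show that $(\langle M,g_u\rangle)_w\subseteq M_w$, which forces $G_w=M_w$ and contradicts Theorem~\ref{thm:ProjectionsOfProperDenseAreProper}: the hypotheses on $G$ and $M$ guarantee (as in the proof of Theorem~\ref{thm:ProjectionsOFMaximalAreMaximal}) that $M$ is a proper prodense subgroup, so $M_w\lneq G_w$. The underlying point is that $g_u\in\rist_G(u)$ fixes every vertex of $L_n$, hence $g_u\in\St_G(n)$, and that it acts trivially on $T_w$, so the extra generator ought to contribute nothing new to the $w$-projection.

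To make this precise I would take an arbitrary $h\in\St_G(w)$ and, rewriting a word in $M$ and $g_u$ in the usual way, express it as
\[
h=\nu_1 g_u^{i_1}\nu_1^{-1}\,\nu_2 g_u^{i_2}\nu_2^{-1}\cdots\nu_k g_u^{i_k}\nu_k^{-1}\,\nu_{k+1},
\]
with $\nu_1,\dots,\nu_{k+1}\in M$. Since each conjugate $\nu_j g_u^{i_j}\nu_j^{-1}$ lies in the normal subgroup $\St_G(n)\le\St_G(w)$ and $h\in\St_G(w)$, the trailing factor satisfies $\nu_{k+1}\in\St_M(w)$, so $\varphi_w(\nu_{k+1})\in M_w$. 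For each remaining factor I would distinguish two cases according to the image $\nu_j u\in L_n$. If $\nu_j u\ne w$, then $\nu_j g_u^{i_j}\nu_j^{-1}$ is supported on $C_{\nu_j u}$, which is disjoint from $C_w$, so $\varphi_w(\nu_j g_u^{i_j}\nu_j^{-1})=1\in M_w$. If $\nu_j u=w$, I would invoke the preliminary observation: since $\varphi_u(g_u^{i_j})\in M_u=\varphi_u(\St_M(u))$, I pick $\lambda\in\St_M(u)$ with $\varphi_u(\lambda)=\varphi_u(g_u^{i_j})$; then $g_u^{i_j}$ and $\lambda$ agree on $T_u$, so their $\nu_j$-conjugates agree on $T_w=\nu_j T_u$, giving $\varphi_w(\nu_j g_u^{i_j}\nu_j^{-1})=\varphi_w(\nu_j\lambda\nu_j^{-1})$, and $\nu_j\lambda\nu_j^{-1}\in\St_M(w)$ because $\nu_j,\lambda\in M$ and $\nu_j u=w$. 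In either case the factor projects into $M_w$, whence $\varphi_w(h)\in M_w$ and therefore $G_w=(\langle M,g_u\rangle)_w\subseteq M_w$, the desired contradiction. Consequently $g_u\in M$ for every $u\in L_n$, and $g\in\rist_M(n)$.

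I expect the main obstacle to be precisely the case $\nu_j u=w$ in the word computation, namely transporting the projection of $g_u$ from the vertex $u$ to the vertex $w$ and recognising that it lands inside $M_w$ rather than merely inside $G_w$. This is where the facts that $\varphi_u(g_u)\in M_u$ and that $M$ carries $\St_M(u)$ onto $\St_M(w)$ through elements realising $\nu_j u=w$ are essential, and it closely parallels the projection computation already carried out in the proof of Theorem~\ref{thm:ProjectionsOFMaximalAreMaximal}. Everything else (the trivial inclusion, the product decomposition, and the disjoint-support case $\nu_j u\ne w$) is routine.
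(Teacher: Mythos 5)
Your proof is correct and follows essentially the same route as the paper: decompose $g=\prod_{v\in L_n}g_v$, observe $\varphi_u(g_u)=\varphi_u(g)\in M_u$, and use maximality of $M$ together with Theorem \ref{thm:ProjectionsOfProperDenseAreProper} to force $g_u\in M$. The only (cosmetic) difference is that you derive the contradiction by projecting $\langle M,g_u\rangle$ at a sibling vertex $w\ne u$, whereas the paper shows directly that $\langle M,g_u\rangle_u=M_u\lneq G_u$ at $u$ itself; your word-rewriting computation makes explicit the step the paper leaves as ``easy to see.''
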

\begin{proof}
It is clear that $\rist_M(n)\leq \rist_G(n)\cap M$. Thus, we only need to show the other inclusion. Let $g\in \rist_G(n)\cap M$ be any element. Then, by the definition of $\rist_G(n)$, there exists for all $v\in L_n$ an element $g_v\in \rist_G(v)$ such that
\[g=\prod_{v\in L_n}g_v.\]
Since $g\in M$, we have $\varphi_v(g)\in M_v$ for all $v\in L_n$. Notice that we have $\varphi_v(g)=\varphi_v(g_v)$, since $\varphi_v(g_w)=1$ for all $w\ne v$.

Let us fix some $v\in L_n$, and let us define $H=\langle M, g_v \rangle$. Since $\varphi_v(g_v)\in M_v$ and $\varphi_w(g_v)=1$, it is easy to see that we have $H_v=M_v$. As $M_v$ is a proper subgroup of $G_v$ by Theorem \ref{thm:ProjectionsOfProperDenseAreProper}, we conclude that $H\ne G$. Therefore, by the maximality of $M$, we have $H=M$, which implies that $g_v\in \rist_M(v)$. As this is true for all $v\in L_n$, we get that $g\in \rist_M(n)$.
\end{proof}

\begin{thm}\label{thm:MaxSubgroupsOfBranchAreBranch}
Let $G$ be a branch group acting on a rooted tree $T$, and let $M<G$ be a maximal subgroup of infinite index of $G$. Then, $M$ is a branch group for its action on $T$.
\end{thm}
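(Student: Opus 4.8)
To show that $M$ is a branch group for its action on $T$, I need to verify the three defining conditions: that $M$ acts spherically transitively on $T$, that $\rist_M(v) \neq \{1\}$ for every vertex $v$, and that $\rist_M(n)$ has finite index in $M$ for every level $n$. The guiding idea is to transfer all branch properties from $G$ to $M$ by exploiting that $M$ is prodense (which holds because $M$ is a maximal subgroup of infinite index and every proper quotient of $G$ is in $\MF$, by Proposition \ref{prop:QuotientsOfBranchGroupsAreinMF}), and by leveraging the two already-established facts: that projections $M_v$ are again proper prodense (Theorem \ref{thm:ProjectionsOfProperDenseAreProper}) and that $\rist_M(n) = \rist_G(n) \cap M$ (Lemma \ref{lemma:RistMax}).

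\emph{Spherical transitivity.} Since $M$ is prodense, $M\St_G(n) = G$ for every $n$ (as $\St_G(n)$ contains the non-trivial normal subgroup $\rist_G^{(1)}(n)$, whence $M\St_G(n) \supseteq M\rist_G^{(1)}(n) = G$). Because $G$ already acts transitively on each level $L_n$ and $M$ together with the level stabiliser exhausts $G$, the action of $M$ on $L_n$ must also be transitive. This gives condition (i).

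\emph{Non-triviality of rigid stabilisers.} This is where the real content lies, and I expect it to be the main obstacle. I need $\rist_M(v) \neq \{1\}$ for each $v$. By spherical transitivity it suffices to treat one vertex per level, and by the identification $\rist_M(n) = \rist_G(n)\cap M = \prod_{v\in L_n}\rist_M(v)$ (from Lemma \ref{lemma:RistMax}), it is enough to show $\rist_M(n) \neq \{1\}$ for all $n$; spherical transitivity then forces each factor $\rist_M(v)$ to be non-trivial. The key is that $\rist_M(n)$ cannot be trivial: if it were, then combining $\rist_M(n) = \rist_G(n)\cap M = \{1\}$ with the fact that $\rist_G(n)$ has finite index in $G$ (as $G$ is branch) would force $M$ itself to have finite index in $G$, contradicting that $M$ has infinite index. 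More carefully, one has the injection $M/(\rist_G(n)\cap M) \hookrightarrow G/\rist_G(n)$; if $\rist_G(n)\cap M = \{1\}$ then $M$ embeds into the finite group $G/\rist_G(n)$, so $M$ is finite and in particular of finite index, the desired contradiction. Hence $\rist_M(n) \neq \{1\}$, and by transitivity each $\rist_M(v)$ is non-trivial, giving condition (ii).

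\emph{Finite index of level rigid stabilisers.} Finally I must show $[M : \rist_M(n)] < \infty$ for all $n$. Using $\rist_M(n) = \rist_G(n)\cap M$ once more, I consider the natural injection $M/(\rist_G(n)\cap M) \hookrightarrow G/\rist_G(n)$. Since $G$ is branch, $\rist_G(n)$ has finite index in $G$, so $G/\rist_G(n)$ is finite; therefore $M/(\rist_G(n)\cap M) = M/\rist_M(n)$ is finite as well, establishing condition (iii). The three conditions together show that $M$ is a branch group, completing the proof. The only subtle point is ensuring that the identification of $\rist_M(n)$ as a product over $L_n$ interacts correctly with spherical transitivity to descend non-triviality to each individual vertex, but this follows directly from the direct-product decomposition recorded in the preliminaries.
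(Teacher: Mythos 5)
Your overall route is the same as the paper's: everything rests on Lemma \ref{lemma:RistMax}, the identification $\rist_M(n)=\rist_G(n)\cap M$, and the embedding $M/(\rist_G(n)\cap M)\hookrightarrow G/\rist_G(n)$, which yields condition (iii) exactly as in the paper (the paper records only this index computation and leaves (i) and (ii) implicit). Your verification of spherical transitivity via $M\St_G(n)=G$, and your reduction of (ii) to the non-triviality of $\rist_M(n)$ using the direct-product decomposition and transitivity, are both correct.

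However, the step you yourself single out as ``the real content'' contains a genuine logical error. You argue that if $\rist_G(n)\cap M=\{1\}$ then $M$ embeds into the finite group $G/\rist_G(n)$, hence ``$M$ is finite and in particular of finite index.'' This inference is backwards: a finite subgroup of an infinite group has \emph{infinite} index, so finiteness of $M$ does not contradict the hypothesis that $[G:M]=\infty$, and the claimed contradiction does not materialise. The conclusion $\rist_M(n)\neq\{1\}$ is nevertheless true and can be reached correctly in two ways. First, $M$ is infinite: since $M\St_G(m)=G$ for all $m$, one has $[M:\St_M(m)]=[G:\St_G(m)]\geq |L_m|$, and $|L_m|$ is unbounded (the tree must branch at infinitely many levels for $G$ to be weakly branch); an infinite group cannot have a trivial subgroup of finite index, so condition (iii) already forces $\rist_M(n)\neq\{1\}$. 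Alternatively, argue directly from prodensity as the paper does in the weakly branch setting (Theorem \ref{thm:MaxSubgroupsOfWeaklyBranchAreWeaklyBranch}): take a non-trivial $r_1\in\rist_G(n)$; if $r_1\notin M$, choose $m>n$ with $r_1\notin\St_G(m)$ and write $r_1=hr_2$ with $h\in M$ and $r_2\in\rist_G(m)$; then $h=r_1r_2^{-1}$ is a non-trivial element of $\rist_G(n)\cap M$. With either repair, your proof is complete and coincides in substance with the paper's.
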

\begin{proof}
By Lemma \ref{lemma:RistMax}, for all $n\in \NN$, we have $\rist_M(n)=\rist_G(n)\cap M$. Therefore,
\[[M:\rist_M(n)] = [M:\rist_G(n)\cap M] \leq [G:\rist_G(n)]<\infty,\]
so $M$ is a branch group.
\end{proof}

We also have a corresponding result in the case where $G$ is a weakly branch group.

\begin{thm}\label{thm:MaxSubgroupsOfWeaklyBranchAreWeaklyBranch}
Let $G$ be a weakly branch group acting on a rooted tree $T$, and suppose that every proper quotient of $G$ is in $\MF$. Let $M<G$ be a maximal subgroup of infinite index of $G$. Then, $M$ is a weakly branch group for its action on $T$.
\end{thm}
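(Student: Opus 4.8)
The plan is to verify directly that $M$, with its action on $T$, satisfies the two defining conditions of a weakly branch group, namely spherical transitivity and non-triviality of every vertex rigid stabiliser. The starting observation is that $M$ is a \emph{proper prodense} subgroup of $G$: since $M$ is maximal and of infinite index, for any non-trivial $N\trianglelefteq G$ maximality forces $MN\in\{M,G\}$, and $MN=M$ would exhibit $M/N$ as a maximal subgroup of infinite index of the proper quotient $G/N\in\MF$, which is impossible; hence $MN=G$. This is exactly the implication of Proposition \ref{prop:InfiniteIndexIffProdense} that requires no finite generation, so it applies verbatim here.

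For spherical transitivity I would use that $\St_G(n)$ is a non-trivial normal subgroup of $G$ (it contains $\rist_G(v)\neq\{1\}$ for any $v\in L_n$). Prodenseness then gives $M\St_G(n)=G$, and since $\St_G(n)$ fixes $L_n$ pointwise while $G$ acts transitively on $L_n$, the subgroup $M$ must already act transitively on $L_n$. As $n$ is arbitrary, $M$ acts spherically transitively.

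The remaining, and principal, task is to show $\rist_M(v)\neq\{1\}$ for every vertex $v$. By Lemma \ref{lemma:RistMax} we have $\rist_M(n)=\rist_G(n)\cap M=\prod_{v\in L_n}\rist_M(v)$, and spherical transitivity makes the factors $\rist_M(v)$, $v\in L_n$, pairwise conjugate in $M$; so it suffices to prove $\rist_M(n)\neq\{1\}$ for each level $n$. The branch-group argument of Theorem \ref{thm:MaxSubgroupsOfBranchAreBranch} is unavailable, since it relied on the finiteness of $[G:\rist_G(n)]$ to force the intersection $\rist_G(n)\cap M$ to be non-trivial. Instead I would exploit the descending chain $\rist_G(n)\geq\rist_G(n+1)\geq\cdots$ of non-trivial normal subgroups of $G$, whose intersection lies in $\bigcap_m\St_G(m)=\{1\}$; since $\rist_G(n)\neq\{1\}$, there is some $m>n$ with $\rist_G(m)\subsetneq\rist_G(n)$. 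Picking $a\in\rist_G(n)\setminus\rist_G(m)$ and applying prodenseness to the non-trivial normal subgroup $\rist_G(m)$, I can write $a=\alpha p$ with $\alpha\in M$ and $p\in\rist_G(m)\leq\rist_G(n)$; then $\alpha=ap^{-1}\in\rist_G(n)\cap M$, and $\alpha\neq 1$ precisely because $a\notin\rist_G(m)$. By Lemma \ref{lemma:RistMax} this non-trivial element decomposes through $\prod_{v\in L_n}\rist_M(v)$, so at least one $\rist_M(v)$ with $v\in L_n$ is non-trivial, and hence all of them are.

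I expect the main obstacle to be exactly this last step: manufacturing a non-trivial element of $\rist_M(n)$ without a finite-index crutch. The chain argument above is the key new ingredient that replaces the counting used in the branch case; once it is in place, the two weakly branch axioms follow, with spherical transitivity and the reduction across a single level being routine given that $M$ is proper and prodense.
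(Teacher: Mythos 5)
Your proposal is correct and follows essentially the same route as the paper: after reducing to $\rist_M(n)=\rist_G(n)\cap M$ via Lemma \ref{lemma:RistMax}, you produce a non-trivial element of this intersection by choosing $m>n$ with an element $a\in\rist_G(n)\setminus\rist_G(m)$ and applying prodenseness to the normal subgroup $\rist_G(m)$, which is exactly the paper's argument. The only (harmless) differences are that you justify the existence of such an $m$ via the chain $\rist_G(n)\geq\rist_G(n+1)\geq\cdots$ rather than by picking a level not stabilised by a fixed non-trivial $r_1\in\rist_G(n)$, and that you explicitly verify spherical transitivity of $M$, a point the paper leaves implicit.
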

\begin{proof}
By Lemma \ref{lemma:RistMax}, we know that $\rist_M(n)=\rist_G(n)\cap M$ for all $n\in \NN$. We thus need to show that $\rist_G(n)\cap M$ is non-trivial for every $n\in \NN$.

Let us fix some $n\in \NN$, and let $r_1\in \rist_G(n)$ be a non-trivial element of $\rist_G(n)$. If $r_1\in M$, then $\rist_G(n)\cap M\ne 1$. If not,by the non-triviality of $r_1$, there must exist $m>n$ such that $r_1\notin \St_G(m)$. In particular, $r_1\notin \rist_G(m)$. Using the fact that $M$ is prodense, there must exist $h\in M$ and $r_2\in \rist_G(m)$ such that $hr_2=r_1$. Therefore, we have $h=r_1r_2^{-1}$. Since $r_1\notin \rist_G(m)$, we know that  $r_1r_2^{-1}\ne 1$, and since $m>n$, we have $\rist_G(m)\leq \rist_G(n)$, and thus $r_1r_2^{-1}\in \rist_G(n)$. We conclude that $r_1r_2^{-1}\in \rist_G(n)\cap M$, and thus this subgroup is non-trivial. This concludes the proof.

%Let us fix some $n\in \NN$, and let $g\in G\setminus M$ be an arbitrary element. It follows from our assumptions on $G$ and $M$ that $M$ is a prodense subgroup of $G$. In particular, this means that $M\rist_G(n)=G$. Therefore, there exists $m_1\in M$ and $r_1\in \rist_G(n)$ such that $m_1r_1=g$. Since $g\notin M$, we know that $r_1\ne 1$. Therefore, there must exist $m>n$ such that $r_1\notin \St_G(m)$. In particular, $r_1\notin \rist_G(m)$. Now, again using the fact that $M$ is prodense, there must exist $m_2\in M$ and $r_2\in \rist_G(m)$ such that $m_2r_2=g$. Therefore, we have $m_1^{-1}m_2=r_1r_2^{-1}$. Since $r_1\notin \rist_G(m)$, we know that  $r_1r_2^{-1}\ne 1$, and since $m>n$, we have $\rist_G(m)\leq \rist_G(n)$, and thus $r_1r_2^{-1}\in \rist_G(n)$. Since $m_1^{-1}m_2\in M$, we conclude that $r_1r_2^{-1}\in \rist_G(n)\cap M$, and thus this subgroup is non-trivial. This concludes the proof.
\end{proof}

\section{Maximal subgroup of the Basilica group}\label{section:Basilica}

In this section, as an application of Theorem \ref{thm:ProjectionsOFMaximalAreMaximal}, we prove that every maximal subgroup of the Basilica group is of finite index. To the best of our knowledge, this is the first example of a weakly branch but not branch group belonging to the class $\MF$.

\subsection{The Basilica group}

Before we study its maximal subgroups, let us first give a definition of the Basilica group and list some of its properties that will be relevant to our investigation. We refer the interested reader to \cite{GrigorchukZuk02} for more information about this group.

\begin{defn}
Let $X=\{\boldsymbol{0}, \boldsymbol{1}\}$ be an alphabet of two letters, let $\sigma\in \Sym(X)$ be the non-trivial permutation on $X$, and let $a,b\in \Aut(X^*)$ be the automorphisms of the rooted tree $X^*$ defined by the recursive formulas
\[\psi_1(a) = (1,b) \qquad \psi_1(b)=\sigma(a,1).\]
The \emph{Basilica group} $\B=\langle a, b \rangle \leq \Aut(X^*)$ is the group of automorphisms of the rooted tree $X^*$ generated by $a$ and $b$.
\end{defn}

\begin{rem}
In \cite{GrigorchukZuk02}, Grigorchuk and \.{Z}uk defined the Basilica group by a right-action on the rooted tree $X^*$, but it is easy to check that the corresponding left-action, defined by $g\cdot x = x\cdot g^{-1}$, gives rise to the recursion formulas given above.
\end{rem}

The following theorem collects a few properties of the Basilica group $\B$ that were proved by Grigorchuk and \.{Z}uk in \cite{GrigorchukZuk02}. We refer the reader to that text for the proofs.

\begin{thm}\label{thm:FactsAboutBasilica}
Let $\B=\langle a,b \rangle$ be the Basilica group. Then,
\begin{enumerate}[label=(\roman*)]
\item $\B$ is a self-replicating, regular weakly branch group over its derived subgroup $\B'$ (see Definitions \ref{defn:SelfSimilarGroup} and \ref{defn:RegularBranch}),
\item $\B$ is torsion-free,
\item the semigroup generated by $a$ and $b$ is free (so in particular, $\B$ is of exponential growth),
\item $\B/\B' \cong \langle a \rangle \times \langle b \rangle \cong \ZZ^2$.
\end{enumerate}
\end{thm}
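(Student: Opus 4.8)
The four assertions all rest on the wreath recursion $\psi_1(a)=(1,b)$, $\psi_1(b)=\sigma(a,1)$ together with the fact that $\B$ is contracting, so my plan is to extract from the recursion the few section computations that drive each part and then run inductions on word length. First I would record the auxiliary identities $\psi_1(b^2)=(a,a)$, $\psi_1(bab^{-1})=(b,1)$ and $\psi_1([a,b])=(b^{-1},b)$, all immediate from the multiplication rule in $\Sym(X)\ltimes\Aut(X^*)^X$.

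For (i), self-replicating is a first-level check: $\psi_1(b^2)=(a,a)$ puts $a$ into both $\B_{\0}$ and $\B_{\1}$, while $a=(1,b)$ gives $b\in\B_{\1}$ and $bab^{-1}=(b,1)$ gives $b\in\B_{\0}$; hence $\B_{\0}=\B_{\1}=\B$, and spherical transitivity together with the canonical identification $T_v\cong X^*$ propagate this to all vertices by induction. For the regular weakly branch property over $\B'$ I would introduce $P_{\0}=\{g\in\B:(g,1)\in\B'\}$, viewing elements of $\St_\B(1)$ through $\psi_1$; self-replicating makes $P_{\0}$ normal in $\B$, and the commutator identity $[b^2,bab^{-1}]=([a,b],1)\in\B'$ shows $[a,b]\in P_{\0}$, whence $\B'\leq P_{\0}$, that is, $\B'\leq\varphi_{\0}(\rist_{\B'}(\0))$. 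The mirror identity $[b^2,a]=(1,[a,b])$ handles the vertex $\1$, and non-triviality of all rigid stabilisers (so that $\B$ is genuinely weakly branch) follows since $[a,b]\neq1$.

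Parts (iv) and (ii) I would treat together by exponent-counting along the recursion. Reading off $\psi_1$ gives, for any word $w$, the section identities $\epsilon_a(w_{\0})+\epsilon_a(w_{\1})=\epsilon_b(w)$ and $\epsilon_b(w_{\0})+\epsilon_b(w_{\1})=\epsilon_a(w)$, where $\epsilon_a,\epsilon_b$ count signed occurrences; combined with the fact that the root permutation of $w$ is $\sigma^{\epsilon_b(w)}$, a descent on word length (whose degenerate pure-power cases I would clear first by showing $a^n=b^n=1$ forces $n=0$) should prove that every relator $w=1$ satisfies $\epsilon_a(w)=\epsilon_b(w)=0$. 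The abelianised map $a\mapsto(1,0)$, $b\mapsto(0,1)$ is then well defined and onto $\ZZ^2$, giving $\B/\B'\cong\ZZ^2$, which is (iv). For (ii), if $g\neq1$ had finite order then its image in $\B/\B'\cong\ZZ^2$ would be torsion, hence trivial, forcing $g\in\St_\B(1)$; writing $\psi_1(g)=(g_0,g_1)$ with both $g_i$ of finite order, a contraction/induction on length would force $g_0=g_1=1$ and hence $g=1$, a contradiction.

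The genuine obstacle is (iii). The key structural fact is that the sections of a positive word are again positive words, since the sections of $a$ and $b$ lie in $\{1,a,b\}$, so the recursion stays inside the free semigroup. Assuming a counterexample $u=v$ with $u\neq v$ of minimal length, a common first letter cancels and shortens the words, so one reduces to the case where $u$ and $v$ begin with different generators; the difficulty is to convert the resulting clash of root permutations and positive sections into a strict descent while excluding the pure-$b$ configurations in which one section is empty. I expect essentially all the real work of the theorem to sit here, in arranging this induction so that it terminates; the exponential-growth corollary is then immediate from freeness of the semigroup.
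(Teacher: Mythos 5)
The paper itself offers no proof of this theorem: it is quoted from Grigorchuk and \.{Z}uk with an explicit pointer to \cite{GrigorchukZuk02}, so there is no in-paper argument to measure you against, and your proposal has to stand on its own. Part (i) essentially does: the identities $\psi_1(b^2)=(a,a)$, $\psi_1(bab^{-1})=(b,1)$, $[b^2,bab^{-1}]=([a,b],1)$ and $[b^2,a]=(1,[a,b])$ are all correct (note you are using the convention $[x,y]=xyx^{-1}y^{-1}$, under which $\psi_1([a,b])=(b^{-1},b)$, whereas the paper's convention $[x,y]=x^{-1}y^{-1}xy$ gives $(a^{-1}ba,b^{-1})$ --- harmless, but worth flagging); normality of $P_{\0}$ follows by lifting a conjugator through $\B_{\0}=\B$ to an element of $\St_{\B}(1)$, and since $\B'$ is the normal closure of the single commutator $[a,b]$ in a $2$-generated group, $\B'\leq P_{\0}$ follows. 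You should still say a word about spherical transitivity (level-one transitivity of $b$ plus self-replication), since Definition \ref{defn:RegularBranch} presupposes that $\B$ is weakly branch.

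The genuine gaps are in (ii)--(iv), and they all have the same source: the first-level decomposition is not strictly contracting. Every letter of a word $w$ contributes exactly one letter to exactly one section, so $|w_{\0}|+|w_{\1}|=|w|$ before free reduction, and your induction on length stalls exactly when all letters feed into a single section and no cancellation occurs. These stalling words are not only the pure powers you propose to clear first: for instance $b^{-1}a^{k}b=(a^{-1}b^{k}a,1)$ has a section of the same length as itself. The descent does close, but only after characterising the stalling words (they are conjugates of powers, which contract after a second application of $\psi_1$) or after a finiteness argument. For (ii) the problem is sharper still: from $\psi_1(g)=(g_0,g_1)$ with $|g_0|+|g_1|\leq|g|$ and a minimal-length counterexample you can only conclude that exactly one section is nontrivial and has the \emph{same} length as $g$; this can recur forever, and you must invoke the finiteness of the ball of that radius to produce an element equal to one of its own iterated sections (with all other sections trivial), which is then trivial by induction on levels --- none of this is in your sketch. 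Finally, (iii) is simply not proved: you correctly identify that positive words have positive sections and that the real difficulty is making the induction terminate, but you stop there, and freeness of the semigroup is the hardest of the four assertions (it is a substantial argument in \cite{GrigorchukZuk02}). As written, the proposal establishes (i) and gives a repairable but incomplete outline of (ii) and (iv), while (iii) remains open.
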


In the same article, Grigorchuk and \.{Z}uk also obtained a presentation for $\B$, which will be useful later on.

\begin{thm}[Proposition 9 of \cite{GrigorchukZuk02}]\label{thm:PresentationOfBasilica}
The Basilica group $\B$ has the presentation
\[\B = \langle a,b \mid \lambda^{k}(\tau_m), m=2l+1, k\in \NN, l\in \NN \rangle\]
where
\[\tau_m = [b^{-m}ab^{m}, a]\]
and
\[\lambda \colon \begin{cases}
a\mapsto b^2\\
b\mapsto a.
\end{cases}\]
\end{thm}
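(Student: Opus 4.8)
The plan is to let $F$ be the free group on $\{a,b\}$, let $\pi\colon F\to\B$ be the canonical surjection with kernel $N$, and to show that $N$ is precisely the normal closure $R$ of $\{\lambda^{k}(\tau_m): k\in\NN,\ m \text{ odd}\}$, where $\lambda$ is regarded as an endomorphism of $F$. I would prove the two inclusions separately. The inclusion $R\le N$ amounts to checking that each relator vanishes in $\B$; the reverse inclusion $N\le R$, that the relators suffice, is the substantial part and relies on the contracting self-similar structure of $\B$.

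For $R\le N$, I would first show $\tau_m=1$ in $\B$ for every odd $m$ by a disjoint-support argument: since $\psi_1(a)=(1,b)$, the element $a$ is supported on the right subtree, while $b$ interchanges the two subtrees, so for odd $m$ the conjugate $b^{-m}ab^{m}$ is supported on the left subtree; having disjoint supports, $b^{-m}ab^{m}$ and $a$ commute, whence $\tau_m=[b^{-m}ab^{m},a]=1$. Next I would show that $\lambda$ descends to an endomorphism of $\B$, so that $\lambda^{k}(\tau_m)=1$ as well. The key is the wreath-recursion identity
\[\psi_1\bigl(\pi(\lambda(w))\bigr)=\bigl(\rho(w),\pi(w)\bigr),\qquad w\in F,\]
where $\rho\colon F\to\B$ is the homomorphism with $\rho(a)=a$ and $\rho(b)=1$; this is verified on the generators $a,b$ (using $\psi_1(b^{2})=(a,a)$ and $\psi_1(a)=(1,b)$) and extends to all of $F$ because $\lambda(F)\le\langle a,b^{2}\rangle\le\St_\B(1)$ and the section maps are homomorphisms on $\St_\B(1)$. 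If $w\in N$ then $\pi(w)=1$, so $w$ has trivial exponent sums in $a$ and $b$ by Theorem \ref{thm:FactsAboutBasilica}, forcing $\rho(w)=1$; hence $\psi_1(\pi(\lambda(w)))=(1,1)$ and $\lambda(w)\in N$. Thus $\lambda(N)\le N$, and since $\tau_m\in N$ we obtain $\lambda^{k}(\tau_m)\in N$ for all $k$.

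For the reverse inclusion I would set $G=F/R$, with $\eta\colon G\to\B$ the induced surjection, and aim to prove $\eta$ injective. Using the transversal $\{1,b\}$ for the index-two subgroup $\St_G(1)=\eta^{-1}(\St_\B(1))$, a Reidemeister--Schreier computation presents $\St_G(1)$ on the generators $a$, $bab^{-1}$, $b^{2}$, and I would define $\Psi\colon\St_G(1)\to G\times G$ on these by $a\mapsto(1,b)$, $bab^{-1}\mapsto(b,1)$, $b^{2}\mapsto(a,a)$, mirroring $\psi_1$. The crucial point is that $\Psi$ is well defined: because $R$ is $\lambda$-invariant, the section of a relator $\lambda^{k+1}(\tau_m)$ is again the relator $\lambda^{k}(\tau_m)$ in one coordinate and $\rho(\lambda^{k}(\tau_m))=1$ in the other (the group-level shadow of the identity above), so the inductive relators cause no trouble once the lower ones are known to be trivial. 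Granting well-definedness, I would run an induction on word length: any nontrivial element of $\ker\eta$ lies in $\St_G(1)$ and is carried by $\Psi$ to a pair of elements of $\ker\eta$ of strictly smaller length by the contraction of the recursion; minimality then forces these sections, and hence the element itself, to be trivial in $G$.

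I expect the main obstacle to be precisely the well-definedness and faithfulness surrounding $\Psi$. The genuine base case is the seed itself: one must compute $\psi_1(\tau_m)$ explicitly as words and recognise each section as a product of conjugates of relators $\lambda^{k}(\tau_{m'})$, which is where the combinatorics of the $\tau_m$ interlock (and where the infinitude of the seed in $m$ must be handled uniformly). Closing the induction also requires the injectivity of $\Psi$ on $\St_G(1)$, which I would extract from the same disjoint-support phenomenon that makes $\B$ weakly branch over $\B'$ rather than assume it. This coupling of the algebraic recursion with the contracting dynamics is where essentially all of the work lies.
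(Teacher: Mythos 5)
This statement is quoted from Proposition~9 of \cite{GrigorchukZuk02}; the paper gives no proof of it, so there is nothing internal to compare your argument against. Judged on its own terms, your first inclusion ($R\le N$) is correct and complete: the disjoint-support argument for $\tau_m=1$ works (for odd $m$ the conjugate $b^{-m}ab^{m}$ is supported over $\0$ while $a$ is supported over $\1$), and the intertwining identity $\psi_1(\pi(\lambda(w)))=(\rho(w),\pi(w))$, checked on generators and propagated multiplicatively, together with the vanishing of exponent sums for elements of $N$, does give $\lambda(N)\le N$ and hence $R\le N$.

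The reverse inclusion, however, is only a scheme, and it contains one outright error and two deferrals of the actual content. The error is the contraction claim: the Basilica recursion is not strictly contracting on word length. Lemma \ref{lemma:BasilicaNonEll1Expanding} gives only $|g_1|+|g_2|\le|g|$, and equality is common (e.g.\ $b^{2k}=(a^k,a^k)$ and $a^k=(1,b^k)$), so ``carried by $\Psi$ to a pair of elements of strictly smaller length'' is false and the minimal-counterexample induction does not close as stated. Section \ref{section:Basilica} of this paper has to fight exactly this non-contraction (Lemmas \ref{lemma:ForbiddenSubwordBBinverse}--\ref{lemma:ForbiddenWordBsquared} and the case analysis in Lemma \ref{lemma:CanFindAB}) for a different purpose; any correct proof of the presentation must likewise isolate and separately dispose of the words on which no length is lost. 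The two deferrals are: (i) the base case, namely rewriting the seed relators $\tau_m$ and $b\tau_m b^{-1}$ in the Schreier generators, applying $\Psi$, and exhibiting each coordinate as a product of conjugates of relators $\lambda^{k}(\tau_{m'})$ uniformly in $m$ --- you correctly identify this as where the work lies, but it is the proof, not an obstacle to be noted; and (ii) the kernel of $\Psi$. At the free level $\Psi$ is very far from injective --- $(1,b)$ and $(b,1)$ commute, so already $[a,bab^{-1}]$ dies --- and the entire role of the seed $\{\tau_m\}$ is to account for $\ker\Psi$ modulo $R$. Your plan to ``extract injectivity from the disjoint-support phenomenon'' conflates injectivity of $\psi_1$ on $\St_\B(1)$ (known, and irrelevant here) with injectivity of $\Psi$ on $\St_G(1)$ for $G=F/R$, which is essentially equivalent to the theorem being proved. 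Without (i), (ii), and a repaired length argument, the hard direction is not established.
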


\subsection{The derived subgroup of \texorpdfstring{$\B$}{B}}

According to Theorem \ref{thm:FactsAboutBasilica}, the Basilica group $\B$ is a regular weakly branch group over its derived subgroup $\B'$. For this reason, having a good description of $\B'$ will be very useful later on in the study of subgroups and quotients of $\B$. In this subsection, we prove that $\B'$ is a finitely generated group and give a minimal set of generators. We believe that these results could also be of independent interest to anyone studying the Basilica group.

We begin by introducing some notation which will hopefully help to make the computations that follow more readable.

\begin{notation}\label{notation:CommutatorsBasilica}
Let $\B=\langle a, b \rangle$ be the Basilica group, with $a$ and $b$ as above. For all $s,t\in \ZZ$, we will write
\[\alpha_{s,t} = [a^s, b^t],\]
where $[a^s,b^t] = a^{-s}b^{-t}a^sb^t$.
\end{notation}

The following proposition establishes some relations between these commutators.

\begin{prop} \label{prop:CommutatorIdentities}
For all $s,t\in \ZZ$, we have the following relations in $\B$: 
\begin{align*}
\alpha_{s,2t+1} &= \left(\alpha_{1,1}(\alpha_{1,-1}^{-1}\alpha_{1,1})^{t}\right)^{s} \\
\alpha_{s,2t} &= \alpha_{1,1}^{s-1}\left(\alpha_{1,2}^{t}\alpha_{1,1}^{-1}\right)^{s-1}\alpha_{1,2}^t
\end{align*}
\end{prop}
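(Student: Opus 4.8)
The plan is to exploit the self-similar structure of $\B$: since the wreath recursion $\psi_1$ is an isomorphism onto its image (Proposition \ref{prop:Psin}), it suffices to verify each identity after applying $\psi_1$. Both sides of each identity are products of commutators of powers of $a$ and $b$, and since $a$ fixes the first level, every such commutator $[a^i,b^j]$ lies in $\St_\B(1)$ (the net permutation $\sigma_{a^i}^{-1}\sigma_{b^j}^{-1}\sigma_{a^i}\sigma_{b^j}$ is trivial because $\sigma_{a^i}=\id$); hence so do the products. Therefore $\psi_1$ restricts to an injective homomorphism on the relevant elements, and I only need to compare the two sections of each side.

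The computation rests on a single relation, namely that $c:=\alpha_{1,1}=[a,b]$ commutes with $a$. This is immediate from the recursion: one computes $\psi_1(c)=(a^{-1}ba,\,b^{-1})=(bc^{-1},\,b^{-1})$ (the equality $a^{-1}ba=bc^{-1}$ being definitional), and since $\psi_1(a)=(1,b)$ has trivial first coordinate while the second coordinate $b^{-1}$ of $\psi_1(c)$ commutes with $b$, we get $\psi_1([c,a])=(1,1)$, so $[c,a]=1$. Equivalently this follows from the relation $\tau_1=[b^{-1}ab,a]=1$ of Theorem \ref{thm:PresentationOfBasilica}, via $c=a^{-1}(b^{-1}ab)$ and $[c,a]=[b^{-1}ab,a]=\tau_1$. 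The essential consequence I will use repeatedly is $a^{-t}ba^t=bc^{-t}$ for every $t\in\ZZ$, obtained by induction once $c$ is known to commute with $a$.

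For the odd case I would compute the sections of $\alpha_{s,2t+1}=[a^s,b^{2t+1}]$ from $\psi_1(a^s)=(1,b^s)$ and $\psi_1(b^{2t+1})=\sigma(a^{t+1},a^t)$, obtaining
\[
\psi_1(\alpha_{s,2t+1})=\bigl(a^{-(t+1)}b^s a^{t+1},\,b^{-s}\bigr)=\bigl((bc^{-(t+1)})^s,\,b^{-s}\bigr),
\]
where the second equality uses $a^{-(t+1)}ba^{t+1}=bc^{-(t+1)}$. On the other side, from $\psi_1(\alpha_{1,1})=(bc^{-1},b^{-1})$ and $\psi_1(\alpha_{1,-1}^{-1}\alpha_{1,1})=(c^{-1},1)$ one gets $\psi_1\!\left(\alpha_{1,1}(\alpha_{1,-1}^{-1}\alpha_{1,1})^{t}\right)=(bc^{-(t+1)},\,b^{-1})$, and raising to the $s$-th power matches the displayed expression exactly. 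For the even case the sections of $\alpha_{s,2t}=[a^s,b^{2t}]$ come out as $\psi_1(\alpha_{s,2t})=\bigl(1,\,b^{-s}(bc^{-t})^s\bigr)$, again via $a^{-t}b^s a^t=(bc^{-t})^s$; computing $\psi_1$ of $\alpha_{1,1}^{s-1}(\alpha_{1,2}^{t}\alpha_{1,1}^{-1})^{s-1}\alpha_{1,2}^t$ from $\psi_1(\alpha_{1,2})=(1,c^{-1})$ yields $\bigl(1,\,b^{-(s-1)}(c^{-t}b)^{s-1}c^{-t}\bigr)$, and the two second coordinates agree by the trivial regrouping $b^{-s}(bc^{-t})^s=b^{-(s-1)}(c^{-t}b)^{s-1}c^{-t}$.

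The main obstacle is bookkeeping rather than conceptual: I must compute the sections $\psi_1(b^{\pm m})$, $\psi_1(\alpha_{1,-1})$, $\psi_1(\alpha_{1,2})$ correctly and consistently with the paper's left-action multiplication convention, and carefully track the passage $a^{-t}ba^t=bc^{-t}$, which is the one place the relation $[c,a]=1$ enters. Once the sections are in hand, both identities reduce to elementary rewritings valid in any group, so no further relations of $\B$ are needed.
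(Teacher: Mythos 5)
Your proof is correct and follows essentially the same route as the paper's: both verify the identities by computing $\psi_1$ of each side and invoking the injectivity of $\psi_1$. The only cosmetic difference is that you organise the bookkeeping around the single relation $[\alpha_{1,1},a]=1$ (giving $a^{-t}ba^t=\alpha_{1,1}b\alpha_{1,1}^{-1}\cdots$, i.e.\ $bc^{-t}$), whereas the paper checks the needed commutations, such as $[\alpha_{1,-1}^{-1},a^t]=1$, ad hoc via $\psi_1$ — your version is, if anything, slightly cleaner.
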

\begin{proof}
The proof is a direct computation, using the injectivity of the map $\psi_1$ defined in Proposition \ref{prop:Psin}. We will make frequent use of the fact that
\begin{align*}
\psi_1(\alpha_{1,1}) &= (a^{-1}ba, b^{-1})\\
\psi_1(\alpha_{1,-1}) &= (b, b^{-1}) \\
\psi_1(\alpha_{1,2}) &= (1, b^{-1}a^{-1}ba) = (1, \alpha_{1,1}^{-1}).
\end{align*}

By direct computation, we find
\[\psi_1(\alpha_{s,2t+1}) = (a^{-t-1}b^sa^{t+1}, b^{-s}).\]
Therefore, it follows from the injectivity of $\psi_1$ that we have
\[\alpha_{s,2t+1} = \alpha_{1,2t+1}^s.\]
Using this fact, we see that
\begin{align*}
\psi_1((\alpha_{1,-1}^{-1}\alpha_{1,1})^t) &= (b^{-1}a^{-1}ba, 1)^t \\
&=(\alpha_{1,1}^{-1},1)^t \\
&=(\alpha_{t,1}^{-1}, 1).
\end{align*}
It follows that
\begin{align*}
\psi_1\left(\left(\alpha_{1,1}(\alpha_{1,-1}^{-1}\alpha_{1,1})^{t}\right)^{s}\right) &= (a^{-1}ba \alpha_{t,1}^{-1}, b^{-1})^s \\
&=(a^{-1}bab^{-1}a^{-t}ba^t, b^{-1})^s \\
&=(\alpha_{1,-1}a^{-t}ba^{t}, b^{-1})^s\\
&=(\alpha_{1,-1}a^{-t}\alpha_{1,-1}^{-1}a^{-1}ba^{t+1}, b^{-1})^s\\
&=((\alpha_{1,-1}a^{-t}\alpha_{1,-1}^{-1}a^t)(a^{-t-1}ba^{t+1}), b^{-1})^{s}\\
&=([\alpha_{1,-1}^{-1}, a^{t}](a^{-t-1}ba^{t+1}), b^{-1})^s.
\end{align*}
Now, using the fact that $\psi_1(\alpha_{1,-1}^{-1}) = (b^{-1}, b)$ and $\psi_1(a^{t}) = (1, b^{t})$, we see that $\alpha_{1,-1}^{-1}$ and $a^{t}$ commute. Therefore, we have
\begin{align*}
\psi_1\left(\left(\alpha_{1,1}(\alpha_{1,-1}^{-1}\alpha_{1,1})^{t}\right)^{s}\right) &=((a^{-t-1}ba^{t+1})^s, b^{-s}) \\
&=\psi_1(\alpha_{s,2t+1}).
\end{align*}
The first relation then immediately follows from the injectivity of $\psi_1$.

To prove the second relation, let us notice that from direct computations, we immediately get
\[\psi_1(\alpha_{s,2t}) = (1, \alpha_{t,s}^{-1}).\]
Hence, we have
\begin{align*}
\psi_1((\alpha_{1,2}^t\alpha_{1,1}^{-1})^{s-1}) &= (a^{-1}b^{-1}a, \alpha_{1,1}^{-t}b)^{s-1}\\
&= (a^{-1}b^{1-s}a, (\alpha_{t,1}^{-1}b)^{s-1}).
\end{align*}
Therefore,
\begin{align*}
\psi_1(\alpha_{1,1}^{s-1}\left(\alpha_{1,2}^{t}\alpha_{1,1}^{-1}\right)^{s-1}\alpha_{1,2}^t) &= (1, b^{1-s}(\alpha_{t,1}^{-1}b)^{s-1}\alpha_{t,1}^{-1}) \\
&=(1, b^{-s+1}(\alpha_{t,1}^{-1}b)^sb^{-1})\\
&=(1, b^{-s+1}([b,a^t]b)^sb^{-1}) \\
&=(1, b^{-s+1}(b^{-1}a^{-t}ba^{t}b)^sb^{-1})\\
&=(1, b^{-s}a^{-t}b^sa^t) \\
&=\psi_1(\alpha_{s,2t}).
\end{align*}
This proves the second relation.
\end{proof}

The previous proposition implies that the derived subgroup of the Basilica group is generated by only three elements.

\begin{prop}\label{prop:GeneratorsOfCommutatorSubgroup}
The derived subgroup $\B'$ of the Basilica group $\B$ is generated by $\alpha_{1,1}=[a,b]$, $\alpha_{1,-1}=[a,b^{-1}]$ and $\alpha_{1,2}=[a,b^2]$.
\end{prop}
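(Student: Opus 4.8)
The plan is to introduce the subgroup $M = \langle \alpha_{s,t} \mid s,t\in\ZZ\rangle$ generated by \emph{all} the commutators of Notation \ref{notation:CommutatorsBasilica}, together with the three-generated subgroup $N = \langle \alpha_{1,1}, \alpha_{1,-1}, \alpha_{1,2}\rangle$, and to prove $N = M = \B'$. The equality $N = M$ is immediate from Proposition \ref{prop:CommutatorIdentities}: its two displayed relations express every $\alpha_{s,t}$ with $t$ odd, respectively $t$ even, as a word in $\alpha_{1,1},\alpha_{1,-1},\alpha_{1,2}$, giving $M \leq N$, while $N \leq M$ is trivial since $\alpha_{1,1},\alpha_{1,-1},\alpha_{1,2}$ are themselves among the $\alpha_{s,t}$. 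Moreover each $\alpha_{s,t} = [a^s,b^t]$ is a commutator, so $M \leq \B'$ at once; it therefore only remains to prove the reverse inclusion $\B' \leq M$.

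For this I would show that $M$ is normal in $\B$ and that $\B/M$ is abelian, so that the minimality of the derived subgroup forces $\B' \leq M$. Normality is checked on the generators $a^{\pm 1}, b^{\pm 1}$ of $\B$ using the elementary commutator identities $[x, yz] = [x,z]\,z^{-1}[x,y]z$ and $[xy, z] = y^{-1}[x,z]y\,[y,z]$, which hold in any group. Taking $x = a^s$, $z = b^t$ and $y = b^{\pm 1}$ in the first identity writes $b^{\mp 1}\alpha_{s,t}b^{\pm 1}$ as a product of two elements of the form $\alpha_{*,*}$, and taking $y = a^{\pm 1}$ in the second does the same for $a^{\mp 1}\alpha_{s,t}a^{\pm 1}$; explicitly one obtains $b^{-1}\alpha_{s,t}b = \alpha_{s,1}^{-1}\alpha_{s,t+1}$ and $a^{-1}\alpha_{s,t}a = \alpha_{s+1,t}\alpha_{1,t}^{-1}$, and symmetrically for conjugation by $a$ and $b$. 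Hence every conjugate of a generator of $M$ by $a^{\pm1}$ or $b^{\pm1}$ lies again in $M$, so $M \trianglelefteq \B$. Finally, since $\alpha_{1,1} = [a,b] \in M$, the images of $a$ and $b$ commute in $\B/M$; as they generate the quotient, $\B/M$ is abelian, whence $\B' \leq M$. Combining the inclusions yields $\B' = M = N$.

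I do not expect a genuine obstacle here, as every step is a routine application of standard commutator identities together with Proposition \ref{prop:CommutatorIdentities}. The only point requiring care is the bookkeeping in the normality argument: one must verify invariance under all four conjugations $a^{\pm1}, b^{\pm1}$ (not merely two), and confirm that each resulting product really stays inside $M$, which it does since every factor is again some $\alpha_{s',t'}$. The substantive content of the proposition is already carried by Proposition \ref{prop:CommutatorIdentities}, which reduces the infinite generating set $\{\alpha_{s,t}\}$ of $\B'$ to the three listed elements.
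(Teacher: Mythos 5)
Your proof is correct and follows essentially the same route as the paper: in both cases the substance is Proposition \ref{prop:CommutatorIdentities}, which rewrites every $\alpha_{s,t}$ in terms of the three listed elements, applied to the fact that $\B'$ is generated by $\{\alpha_{s,t} \mid s,t\in\ZZ\}$. The only difference is that you prove this last fact from scratch (via normality of $M=\langle \alpha_{s,t}\rangle$ and abelianness of $\B/M$), whereas the paper simply asserts it as a consequence of $\B$ being $2$-generated; your conjugation computations are correct, so this is a harmless (and slightly more self-contained) elaboration.
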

\begin{proof}
Since the Basilica group is generated by two elements, $a$ and $b$, its commutator subgroup $\B'$ is generated by the set 
\[\{\alpha_{s,t}=[a^s,b^t] | s,t\in \mathbb{Z} \}.\]
It then follows from Proposition \ref{prop:CommutatorIdentities} that $\B'$ is generated by $\alpha_{1,1}$, $\alpha_{1,-1}$ and $\alpha_{1,2}$.
\end{proof}

It turns out that this generating set is minimal. However, to prove this, we will need a better understanding of the quotients of $\B$, so we postpone the proof of this fact to the next subsection (Proposition \ref{prop:AbelianizationOfB'IsZ3}).

\subsection{Quotients of \texorpdfstring{$\B$}{B}}

To use Theorem \ref{thm:ProjectionsOFMaximalAreMaximal} to study maximal subgroups of the Basilica group $\B$, we first need to show that every proper quotient of $\B$ belongs to the class $\MF$. Let us first notice that $\B$ is not a branch group, so we cannot simply use Proposition \ref{prop:QuotientsOfBranchGroupsAreinMF}. To show this, we first observe that the discrete Heisenberg group $H_3(\ZZ)$ is a quotient of $\B$.

\begin{prop}\label{prop:HeisenbergIsQuotientOfBasilica}
Let $\gamma_3(\B)=[\B',\B]$ be the third term in the lower central series of the Basilica group $\B$, and let $H_3(\ZZ)$ be the discrete Heisenberg group. Then, $\B/\gamma_3(\B) \cong H_3(\ZZ)$.
\end{prop}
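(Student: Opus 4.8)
The plan is to exhibit an explicit isomorphism between $\B/\gamma_3(\B)$ and $H_3(\ZZ)$ by matching generators and relations. Recall that the Heisenberg group has the presentation $H_3(\ZZ) = \langle x, y \mid [x,[x,y]], [y,[x,y]] \rangle$; equivalently, it is the free nilpotent group of class $2$ on two generators, so it is the universal object among class-$2$ nilpotent quotients of the free group $F_2 = \langle a, b\rangle$. Since $\B = \langle a,b\rangle$ is two-generated and $\B/\gamma_3(\B)$ is by construction nilpotent of class (at most) $2$, the surjection $F_2 \twoheadrightarrow \B$ induces a surjection $H_3(\ZZ) \cong F_2/\gamma_3(F_2) \twoheadrightarrow \B/\gamma_3(\B)$ sending $x \mapsto a$, $y\mapsto b$. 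So the content of the statement is that this surjection is in fact an \emph{isomorphism}, i.e.\ that $\B/\gamma_3(\B)$ is not a proper quotient of $H_3(\ZZ)$ but is as large as it can possibly be for a two-generated class-$2$ nilpotent group.

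First I would set up this surjection $\pi\colon H_3(\ZZ)\to \B/\gamma_3(\B)$ formally, noting that $\B/\gamma_3(\B)$ is generated by the images of $a$ and $b$ and is nilpotent of class $\leq 2$ (since $\gamma_3(\B) = [\gamma_2(\B),\B] = [\B',\B]$ is killed). To prove injectivity, it suffices to show that $\B/\gamma_3(\B)$ has the same structure as $H_3(\ZZ)$: its abelianisation is $\ZZ^2$ and the central subgroup $\gamma_2/\gamma_3 = \B'/[\B',\B]$ is infinite cyclic, generated by the image of $[a,b]$. The abelianisation being $\ZZ^2$ is already recorded in Theorem \ref{thm:FactsAboutBasilica}(iv). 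The crux is therefore to show that $\B'/\gamma_3(\B)$ is infinite cyclic, generated by the image of $\alpha_{1,1}=[a,b]$, and in particular that this image has infinite order.

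For the cyclicity half, I would use Proposition \ref{prop:GeneratorsOfCommutatorSubgroup}, which tells us $\B'$ is generated by $\alpha_{1,1}$, $\alpha_{1,-1}$, $\alpha_{1,2}$, together with the commutator identities of Proposition \ref{prop:CommutatorIdentities}. Working modulo $\gamma_3(\B)$, all of $\B'$ becomes central, so the nonlinear expressions in those identities collapse: the relations $\alpha_{s,2t+1} \equiv \alpha_{1,1}^{s}\alpha_{1,-1}^{-st}\alpha_{1,1}^{st}$ and $\alpha_{s,2t}\equiv \alpha_{1,2}^{st}$ (read off the identities after setting all commutators central and expanding additively) should force $\alpha_{1,-1}$ and $\alpha_{1,2}$ to lie in the cyclic group generated by $\alpha_{1,1}$ modulo $\gamma_3(\B)$. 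Concretely, I expect the bilinearity of the commutator map modulo $\gamma_3$ — namely $[a^s,b^t]\equiv [a,b]^{st}$ — to give $\alpha_{1,-1}\equiv \alpha_{1,1}^{-1}$ and $\alpha_{1,2}\equiv \alpha_{1,1}^{2}$, so that $\B'/\gamma_3(\B)$ is cyclic, generated by the image of $[a,b]$.

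The main obstacle, and the step I would treat most carefully, is showing that the image of $[a,b]$ has \emph{infinite} order in $\B/\gamma_3(\B)$ — equivalently that $\gamma_3(\B)$ does not already contain some nontrivial power of $[a,b]$, which is what would collapse the central $\ZZ$ down to a finite cyclic group and ruin the isomorphism. Since $\B$ is torsion-free (Theorem \ref{thm:FactsAboutBasilica}(ii)) this is suggestive but not sufficient, as torsion-freeness of $\B$ need not survive to the quotient. I would establish it by producing an explicit homomorphism from $\B$ onto $H_3(\ZZ)$ (or onto a group in which $[a,b]$ has infinite order and $\gamma_3$ dies), for instance by sending $a,b$ to the standard generators of $H_3(\ZZ)$ and verifying that all the defining relators $\lambda^k(\tau_m)$ of the presentation in Theorem \ref{thm:PresentationOfBasilica} map to the identity. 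Each $\tau_m = [b^{-m}ab^m, a]$ is a commutator of two conjugates of $a$, hence lies in $\gamma_3(F_2)$ and dies in any class-$2$ nilpotent quotient; applying $\lambda$ preserves this, so all relators are killed and the map is well-defined. This homomorphism, combined with the surjection $\pi$, pins down $\B/\gamma_3(\B)$ as exactly $H_3(\ZZ)$, completing the argument.
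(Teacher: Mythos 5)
Your argument is correct and is essentially the paper's: both proofs come down to the single observation that every relator $\lambda^k(\tau_m)$ in the presentation of Theorem \ref{thm:PresentationOfBasilica} is a commutator of two conjugates of a power of a generator and hence already lies in $\gamma_3(F_2)$, so that $\B/\gamma_3(\B)$ is the free class-$2$ nilpotent group on two generators, namely $H_3(\ZZ)$ --- the paper packages this as a Tietze reduction of the presentation, while you package it as a pair of homomorphisms whose composite is the identity. Your middle paragraph on the cyclicity of $\B'/\gamma_3(\B)$ via Propositions \ref{prop:CommutatorIdentities} and \ref{prop:GeneratorsOfCommutatorSubgroup} is superfluous (and in any case follows from the standard fact that $\gamma_2/\gamma_3$ of a two-generated group is generated by the image of $[a,b]$), since once both maps are in hand the composite $H_3(\ZZ)\to\B/\gamma_3(\B)\to H_3(\ZZ)$ fixes the generators and is therefore the identity.
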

\begin{proof}
By Theorem \ref{thm:PresentationOfBasilica} and the fact that $\gamma_3(\B)$ is normally generated by $[[a,b],a]$ and $[[a,b],b]$, we have 
\[\B/\gamma_3(\B) = \langle a,b \mid [[a,b],a], [[a,b],b], \lambda^k(\tau_m), m\in 2\NN+1, k\in \NN \rangle\]
with
\[\tau_m = [b^{-m}ab^{m}, a]\]
and
\[\lambda \colon \begin{cases}
a\mapsto b^2\\
b\mapsto a.
\end{cases}\]
Let $F(a,b)$ be the free group on $a$ and $b$. Then, for all $m,n\in \ZZ$ and for $x,y\in \{a,b\}$, we have
\begin{align*}
[x^{-m}y^{n}x^{m}, y^{n}] &= y^{-n}y^{n}x^{-m}y^{-n}x^{m}y^{-n}x^{-m}y^{n}x^{m}y^{-n}y^{n}y^{n} \\
&=y^{-n}[y^{-n},x^{m}]y^{-n}[x^{m},y^{-n}]y^{n}y^n \\
&=y^{-n}[[x^m,y^{-n}], y^n]y^n\\
&\in \gamma_3(F(a,b)),
\end{align*}
where $\gamma_3(F(a,b))$ is the third term in the lower central series of $F(a,b)$.

Since $\gamma_3(F(a,b))$ is normally generated by $[[a,b],a]$ and $[[a,b],b]$, we conclude by the above that $\lambda^k(\tau_m)$ is a consequence of these two relations for all $k\in \NN$ and $m\in 2\NN+1$. Therefore, we have
\[\B/\gamma_3(\B) = \langle a,b \mid [[a,b],a], [[a,b],b] \rangle\]
which is the presentation of the discrete Heisenberg group $H_3(\ZZ)$.
\end{proof}

As a direct consequence, we get that the Basilica group is not a branch group.

\begin{cor}\label{cor:BasilicaIsNotBranch}
The Basilica group $\B$ is not a branch group.
\end{cor}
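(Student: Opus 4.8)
The plan is to derive a contradiction from the assumption that $\B$ is a branch group, by combining the Heisenberg quotient produced in Proposition \ref{prop:HeisenbergIsQuotientOfBasilica} with the fact, already extracted in the proof of Proposition \ref{prop:QuotientsOfBranchGroupsAreinMF}, that proper quotients of finitely generated branch groups are severely constrained. The essential point is that a branch group cannot have a non-virtually-abelian proper quotient, whereas the discrete Heisenberg group $H_3(\ZZ)$ is nilpotent of class exactly $2$ and is not virtually abelian.

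Concretely, suppose for contradiction that $\B$ is a branch group. First I would recall the mechanism underlying Proposition \ref{prop:QuotientsOfBranchGroupsAreinMF}: for any non-trivial normal subgroup $N\trianglelefteq \B$, Lemma \ref{lemma:SubnormalContainsDerivedRist} supplies some $n\in\NN$ with $\rist_\B^{(1)}(n)\leq N$, and since $\rist_\B(n)$ has finite index in a branch group, the quotient $\B/\rist_\B^{(1)}(n)$ is finitely generated and virtually abelian. Hence every proper quotient $\B/N$, being a quotient of a finitely generated virtually abelian group, is itself virtually abelian. Next I would invoke Proposition \ref{prop:HeisenbergIsQuotientOfBasilica}, which identifies $\B/\gamma_3(\B)\cong H_3(\ZZ)$.

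It remains to check that this is a \emph{proper} quotient, i.e.\ that $\gamma_3(\B)\neq\{1\}$, and to note that $H_3(\ZZ)$ is not virtually abelian. The first follows because $\B$ is weakly branch, so by Lemma \ref{lemma:RistNotSolvable} its rigid stabilisers are non-solvable and in particular $\B$ is not nilpotent; since $H_3(\ZZ)$ is nilpotent, we cannot have $\B\cong H_3(\ZZ)$, forcing $\gamma_3(\B)\neq\{1\}$. (Alternatively one may cite the exponential growth of $\B$ from Theorem \ref{thm:FactsAboutBasilica} against the polynomial growth of $H_3(\ZZ)$.) For the second, the centre of $H_3(\ZZ)$ has infinite index and any finite-index subgroup still has class-$2$ nilpotent structure, so $H_3(\ZZ)$ admits no abelian subgroup of finite index. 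Thus $\B/\gamma_3(\B)$ is a proper quotient which is not virtually abelian, contradicting the conclusion of the previous paragraph; therefore $\B$ is not a branch group.

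I do not expect any serious obstacle here: the corollary is a short deduction, and the only verifications needed are the standard facts that $H_3(\ZZ)$ is not virtually abelian and that $\gamma_3(\B)$ is non-trivial, both of which are immediate from the results already available in the excerpt.
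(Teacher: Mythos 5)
Your proposal is correct and follows the same route as the paper's own proof: both combine the fact that proper quotients of branch groups are virtually abelian (via Lemma \ref{lemma:SubnormalContainsDerivedRist}) with Proposition \ref{prop:HeisenbergIsQuotientOfBasilica} and use Lemma \ref{lemma:RistNotSolvable} to see that the Heisenberg quotient is proper. Your extra verifications (non-triviality of $\gamma_3(\B)$, $H_3(\ZZ)$ not virtually abelian) merely spell out steps the paper leaves implicit.
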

\begin{proof}
It follows from Lemma \ref{lemma:SubnormalContainsDerivedRist} that every proper quotient of a branch group is virtually abelian. However, by Proposition \ref{prop:HeisenbergIsQuotientOfBasilica}, $\B$ admits the discrete Heisenberg group $H_3(\ZZ)$ as a quotient, and this quotient must be proper by Lemma \ref{lemma:RistNotSolvable}. As $H_3(\ZZ)$ is not virtually abelian, we conclude that $\B$ is not a branch group.
\end{proof}

Therefore, before we can use Theorem \ref{thm:ProjectionsOFMaximalAreMaximal}, we first need to prove that every proper quotient of $\B$ is in $\MF$. In fact, we will prove something stronger, namely that every proper quotient of $\B$ is virtually nilpotent. This is indeed stronger, since every finitely generated virtually nilpotent group is in $\MF$.

To prove that every proper quotient is virtually nilpotent, since $\B$ is a weakly regular branch group over $\B'$, it is sufficient to prove that $\B/\B''$ is virtually nilpotent, as the next theorem shows.

\begin{thm}\label{thm:QuotientsOfWeaklyBranchGroups}
Let $X$ be a finite alphabet of size $d$, let $G\leq \Aut(X^*)$ be a regular weakly branch group over a normal subgroup $K$, and let $\mathcal{P}$ be a property of groups that is preserved under taking finite direct products, quotients and subgroups. Then, every proper quotient of $G$ is virtually $\mathcal{P}$ (i.e. contains a subgroup of finite index with property $\mathcal{P}$) if and only if $G/K'$ is virtually $\mathcal{P}$.
\end{thm}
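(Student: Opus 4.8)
The forward implication is immediate once we know that $K' \neq 1$, so that $G/K'$ is a proper quotient. To see this, note that since $K$ is non-trivial and normal in the weakly branch group $G$, there is a vertex $v$ with $Kv \neq v$, and Lemma \ref{lemma:SubnormalContainsDerivedRist} applied to $K$ (normal, hence $1$-subnormal) gives $\rist_G^{(1)}(v) \leq K$; consequently $K' \geq \rist_G^{(2)}(v) \neq 1$ by Lemma \ref{lemma:RistNotSolvable}. Thus if every proper quotient of $G$ is virtually $\mathcal{P}$, then in particular $G/K'$ is.

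For the converse, assume $G/K'$ is virtually $\mathcal{P}$ and let $N \trianglelefteq G$ be non-trivial. The plan is to reduce to a family of explicit geometric normal subgroups contained in $N$. Since $N \neq 1$, some vertex of length $n \geq 1$ is moved by $N$, so Lemma \ref{lemma:SubnormalContainsDerivedRist} gives $\rist_G^{(1)}(w) \leq N$ for that vertex; by normality of $N$ and spherical transitivity this holds for every $w \in L_n$, whence $\rist_G(n)' = \prod_{w\in L_n}\rist_G^{(1)}(w) \leq N$. Setting $D_n := \prod_{w \in L_n}\rist_K(w)' \leq \rist_G(n)'$ we obtain $D_n \leq N$. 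As "virtually $\mathcal{P}$" passes to quotients, it then suffices to prove that $G/D_n$ is virtually $\mathcal{P}$ for every $n$, for then $G/N$ is one of its quotients.

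I would prove this by induction on $n$, carried out uniformly over the class $\mathcal{C}$ of all pairs $(H,J)$ with $K \leq J \trianglelefteq H \leq G$ and $H$ self-similar; for such a pair set $J_v := \varphi_v(\rist_J(v))$ and $D_n(H,J) := \prod_{w \in L_n}\rist_J(w)'$, a normal subgroup of $H$ since $H$ permutes the factors. Note $(G,K) \in \mathcal{C}$ and $D_n = D_n(G,K)$. The base case $n=0$ reads $D_0(H,J) = J'$, and $H/J'$ is a quotient of $H/K'$ (as $K \leq J$), which embeds into $G/K'$ and is therefore virtually $\mathcal{P}$. For the inductive step, applying $\psi_1$ and grouping the vertices of $L_n$ by their first letter yields $\psi_1(D_n(H,J)) = \prod_{v\in X}\prod_{u\in L_{n-1}}\varphi_v(\rist_J(vu))'$, and hence an embedding
\[ \St_H(1)/D_n(H,J) \hookrightarrow \prod_{v\in X} H_v\Big/\Big(\prod_{u\in L_{n-1}}\varphi_v(\rist_J(vu))'\Big). \]
The crux is the identity $\varphi_v(\rist_J(vu)) = \rist_{J_v}(u)$: the inclusion from left to right is clear, while for the reverse one lifts $g \in \rist_{J_v}(u)$ to $h \in \rist_J(v)$, and since $h$ is already confined to $C_v$ and projects to something supported on $C_u$, it must in fact be supported on $C_{vu}$, so $h \in \rist_J(vu)$. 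This identifies the $v$-th factor above with $H_v/D_{n-1}(H_v, J_v)$. One then checks $(H_v, J_v) \in \mathcal{C}$; the only non-formal point is $K \leq J_v$, which follows from the regular weakly branch condition $K \leq \varphi_v(\rist_K(v))$ together with $\rist_K(v) \leq \rist_J(v)$. The inductive hypothesis makes each factor virtually $\mathcal{P}$; since $\mathcal{P}$ is closed under finite products and subgroups, $\St_H(1)/D_n(H,J)$ is virtually $\mathcal{P}$, and as $[H:\St_H(1)] \leq |X|! < \infty$ so is $H/D_n(H,J)$.

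I expect the main obstacle to be exactly this inductive step, and within it the two points that genuinely use the branching hypothesis: the rigid-stabiliser identity $\varphi_v(\rist_J(vu)) = \rist_{J_v}(u)$, which is what lets projection interact cleanly with the geometric subgroups $D_n$, and the verification that $\mathcal{C}$ is closed under $(H,J) \mapsto (H_v, J_v)$, i.e. that $K \leq J_v$. Everything else is formal manipulation of direct products together with the hereditary closure properties assumed of "virtually $\mathcal{P}$".
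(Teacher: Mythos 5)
Your argument is correct in substance, but it follows a genuinely different route from the paper's. The paper exploits the regular weakly branch structure in one shot: it takes a subgroup $K_n \leq K$ with $\psi_n(K_n) = K^{d^n}$ supplied by (iterating) Definition \ref{defn:RegularBranch}, observes $K_n' \leq \rist_G(n)' \leq N$, and then uses self-similarity of $G$ to embed $\St_G(n)/K_n'$ directly into $(G/K')^{d^n}$ via $\psi_n$ --- no induction is needed. You instead work with the geometric subgroups $D_n = \prod_{w\in L_n}\rist_K(w)'$ and peel off one level at a time, trading the single application of $\psi_n$ for an induction over a class of pairs $(H,J)$; the payoff is that you only ever invoke the first-level condition $K \leq \varphi_v(\rist_K(v))$, $v\in X$, exactly as stated in Definition \ref{defn:RegularBranch}, whereas the paper implicitly iterates it to produce $K_n$. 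Your key identity $\varphi_v(\rist_J(vu)) = \rist_{J_v}(u)$ is correct and plays the same role as the paper's $\psi_n(K_n)=K^{d^n}$ (in particular it is what forces the kernel of $\St_H(1)\rightarrow \prod_v H_v/D_{n-1}(H_v,J_v)$ to be exactly $D_n(H,J)$ and not something larger), and the bookkeeping with the closure properties of ``virtually $\mathcal{P}$'' is fine. One blemish: you include ``$H$ self-similar'' in the definition of $\mathcal{C}$ and assert that $(H_v,J_v)\in\mathcal{C}$ is formal apart from $K\leq J_v$; but $H_v$ being self-similar is neither formal nor true in general (self-similarity of $H$ gives $(H_v)_u=H_{vu}\leq H$, not $H_{vu}\leq H_v$). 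Fortunately this hypothesis is never used in your argument --- only $K\leq J\trianglelefteq H\leq G$ and the self-similarity of $G$ itself (which gives $H_v\leq G_v\leq G$) are needed --- so the fix is simply to delete that condition from the definition of $\mathcal{C}$.
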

\begin{proof}
As $K$ cannot be abelian by Lemmas \ref{lemma:SubnormalContainsDerivedRist} and \ref{lemma:RistNotSolvable}, $G/K'$ is a proper quotient, so the necessity is obvious. Let us show that if $G/K'$ is virtually $\mathcal{P}$, then so is every proper quotient of $G$.

Let $N\trianglelefteq G$ be a non-trivial normal subgroup of $G$. According to Lemma \ref{lemma:SubnormalContainsDerivedRist}, there exists $n\in \NN$ such that $\rist'_G(n) \leq N$. Now, by definition of a regular weakly branch group over $K$, we have that there exists a subgroup $K_n\leq K$ such that $K^{d^n} = \psi_n(K_n)$. In particular, we see that $K_n \leq \rist_G(n)$. Consequently, we have that $K_n' \leq \rist'_G(n) \leq N$.

As $K$ is normal in $G$ and as $G$ is self-similar, it follows from its definition that $K_n$ must also be a normal subgroup of $G$. Consequently, as $K'_n$ is a characteristic subgroup of $K_n$, we have that $K'_n$ is a normal subgroup of $G$. Hence, we can take the quotient $G/K'_n$.

If we can prove that $G/K'_n$ is virtually $\mathcal{P}$, then this will imply that $G/N$ is also virtually $\mathcal{P}$. Indeed, as $K'_n \leq N \leq G$, we have that
\[G/N \cong \left(G/K'_n\right) \bigg/ \left(N/K'_n\right).\]
If $G/K'_n$ is virtually $\mathcal{P}$, then by the correspondence theorem, there exists $H\leq G$ of finite index such that $K'_n\leq H$ and such that  $H/K'_n$ has $\mathcal{P}$. Since $\mathcal{P}$ is preserved by taking quotients, we have that
\[HN/N \cong (H/K'_n)\bigg/(N/K'_n)\cap(H/K'_n)\]
has $\mathcal{P}$. It is also of finite index in $G/N$, since $H$ is of finite index in $G$.

Thus, it suffices to prove that $G/K'_n$ is virtually $\mathcal{P}$. In fact, since $K'_n \leq \St_G(n)$ and since $\St_G(n)$ is of finite index in $G$, it suffices to prove that $\St_G(n)/K'_n$ is virtually $\mathcal{P}$.

Now, since $G$ is self-similar, we have $\psi_n(\St_G(n)) \leq \left(G\right)^{d^n}$. Hence,
\begin{align*}
\psi_n(\St_G(n))/\psi_n(K'_n) &\leq \left(G\right)^{d^n} / (K')^{d^n} \\
&= \left(G/K'\right)^{d^n}. 
\end{align*}

As $G/K'$ is virtually $\mathcal{P}$, there exists a finite index subgroup $H\leq G$ containing $K'$ such that $H/K'$ has property $\mathcal{P}$. Since property $\mathcal{P}$ is preserved by finite direct products, $\left(H/K'\right)^{d^n}$ is a finite index subgroup of $\left(G/K'\right)^{d^n}$ with property $\mathcal{P}$. Let us set $L=\psi_n^{-1}(H^{d^n}\cap \psi_n(\St_G(n)))$. As $K'\leq H$, we clearly have that $K'_n \leq L$. We claim that $L$ is a finite index subgroup of $\St_G(n)$ such that $L/K'_n$ has $\mathcal{P}$.

To see that $L$ is of finite index in $\St_G(n)$, it suffices to notice that since $H^{d^n}$ is of finite index in $G^{d^n}$, we have that $H^{d^n}\cap \psi_n(\St_G(n))$ is of finite index in $\psi_n(\St_G(n))$. Since $\psi_n$ restricted to $\St_G(n)$ is an isomorphism onto its image, we conclude that $L$ is of finite index in $\St_G(n)$. To see that $L/K'_n$ has $\mathcal{P}$, it suffices to notice that $\psi_n$ gives us an isomorphism between $L/K'_n$ and
\[(H^{d^n}\cap \psi_n(\St_G(n)))/(K')^{d^n} \leq (H/K')^{d^n}.\]
Since $(H/K')^{d^n}$ has $\mathcal{P}$ and since $\mathcal{P}$ is inherited by subgroups, we conclude that $L/K'_n$ has $\mathcal{P}$. This proves that $G/K'_n$ is virtually $\mathcal{P}$ and thus concludes the proof.
\end{proof}

We thus only need to show that $\B/\B''$ is virtually nilpotent. For this, we will require the following lemma, which is Lemma 9 in \cite{GrigorchukZuk02}. However, since the proof in \cite{GrigorchukZuk02} relies on Lemma 8 of \cite{GrigorchukZuk02}, whose proof contains a mistake, we give here a different proof.

\begin{lemma}[Lemma 9 of \cite{GrigorchukZuk02}]\label{lemma:B''IsGamma3TimesGamma3}
Let $\gamma_3(\B)=[\B',\B]$ be the third term in the lower central series of the Basilica group $\B$ and let $\B''$ be its second derived subgroup. Then, we have
\[\psi_1(\B'') = \gamma_3(\B)\times\gamma_3(\B).\]
\end{lemma}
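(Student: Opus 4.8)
The plan is to establish the two inclusions $\psi_1(\B'')\le\gamma_3(\B)\times\gamma_3(\B)$ and $\gamma_3(\B)\times\gamma_3(\B)\le\psi_1(\B'')$ separately, using as the central device the quotient map $q\colon\B\to\B/\gamma_3(\B)\cong H_3(\ZZ)$ from Proposition~\ref{prop:HeisenbergIsQuotientOfBasilica}. Since $\B/\B'$ is abelian we have $\B''\le\B'\le\St_{\B}(1)$, so $\psi_1$ restricts to a homomorphism on $\B'$ with image in $\B\times\B$, and I may form $\Psi:=(q\times q)\circ\psi_1\colon\B'\to H_3(\ZZ)\times H_3(\ZZ)$. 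Since $\ker(q\times q)=\gamma_3(\B)\times\gamma_3(\B)$, the kernel of $\Psi$ is exactly $\{x\in\B'\mid\psi_1(x)\in\gamma_3(\B)\times\gamma_3(\B)\}$, and the whole proof reduces to understanding this map.

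For the inclusion $\subseteq$, I would evaluate $\Psi$ on the three generators of $\B'$ furnished by Proposition~\ref{prop:GeneratorsOfCommutatorSubgroup}, using the sections computed in the proof of Proposition~\ref{prop:CommutatorIdentities}, namely $\psi_1(\alpha_{1,1})=(a^{-1}ba,b^{-1})$, $\psi_1(\alpha_{1,-1})=(b,b^{-1})$ and $\psi_1(\alpha_{1,2})=(1,\alpha_{1,1}^{-1})$. Writing $\bar z=[\,\overline a,\overline b\,]$ for the central generator of $H_3(\ZZ)$ and using $q(a^{-1}ba)=\overline b\,\bar z^{-1}$, one checks that every coordinate of every $\Psi(\alpha_{1,j})$ lies in the subgroup $A=\langle\overline b,\bar z\rangle$, which is abelian because $\bar z$ is central. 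Hence $\Psi(\B')\le A\times A$ is abelian, so $\Psi(\B'')=[\Psi(\B'),\Psi(\B')]=1$; that is, $\psi_1(\B'')\le\gamma_3(\B)\times\gamma_3(\B)$.

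For the reverse inclusion, I would first invoke that $\B$ is regular weakly branch over $\B'$ (Theorem~\ref{thm:FactsAboutBasilica}): the rigid stabilisers of the two first-level vertices give $\B'\times\{1\}\le\psi_1(\B')$ and $\{1\}\times\B'\le\psi_1(\B')$, hence $\B'\times\B'\le\psi_1(\B')$, and since $\gamma_3(\B)\le\B'$ also $\gamma_3(\B)\times\gamma_3(\B)\le\psi_1(\B')$. It then remains to identify $\ker\Psi$ with $\B''$: given $(c_1,c_2)\in\gamma_3(\B)\times\gamma_3(\B)$, write it as $\psi_1(x)$ with $x\in\B'$; then $\Psi(x)=(q(c_1),q(c_2))=1$, so once $\ker\Psi=\B''$ is known we obtain $(c_1,c_2)\in\psi_1(\B'')$. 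Combining this with the first inclusion yields the equality.

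The step I expect to be the real work is the computation $\ker\Psi=\B''$, i.e. the injectivity of the induced map $\overline\Psi\colon\B'/\B''\to A\times A\cong\ZZ^4$. To settle it I would read the images $\Psi(\alpha_{1,1}),\Psi(\alpha_{1,-1}),\Psi(\alpha_{1,2})$ in the basis $(\overline b,\bar z)$ of each factor, obtaining the three vectors $(1,-1,-1,0)$, $(1,0,-1,0)$ and $(0,0,0,-1)$, which are $\ZZ$-linearly independent. Since these three elements generate $\B'/\B''$ by Proposition~\ref{prop:GeneratorsOfCommutatorSubgroup} and map to independent vectors, the natural surjection $\ZZ^3\to\B'/\B''$ sending the standard basis to them must be an isomorphism; hence $\overline\Psi$ is injective and $\ker\Psi=\B''$. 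I would stress that this argument is self-contained and in fact \emph{reproves} $\B'/\B''\cong\ZZ^3$, so it does not circularly depend on Proposition~\ref{prop:AbelianizationOfB'IsZ3}. The only genuine subtlety is careful bookkeeping inside $H_3(\ZZ)$, where the centrality of $\bar z$ is exactly what keeps all coordinates within the abelian group $A$ and makes both the abelianity in the $\subseteq$ step and the linear-independence computation go through.
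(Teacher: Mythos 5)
Your proof is correct, but it takes a genuinely different route from the paper's. The paper argues by direct computation: for the inclusion $\psi_1(\B'')\leq\gamma_3(\B)\times\gamma_3(\B)$ it computes $\psi_1$ of the three commutators $[\alpha_{1,i},\alpha_{1,j}]$ of the generators of $\B'$ and checks each coordinate lands in $\gamma_3(\B)$; for the reverse inclusion it observes that $[[b,a],a]=1$, so $\gamma_3(\B)$ is normally generated by $[[b,a],b]$ alone, and then uses the explicit identity $\psi_1([\alpha_{1,1},\alpha_{1,-1}])=([[b,a],b],1)$ together with normality of $\B''$ and self-replication to fill up $\gamma_3(\B)\times\gamma_3(\B)$. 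You instead package everything into the single homomorphism $\Psi=(q\times q)\circ\psi_1$ on $\B'$: the first inclusion becomes the observation that $\Psi(\B')$ lies in the abelian subgroup $\langle\overline b,\bar z\rangle^2$ of $H_3(\ZZ)^2$ (so $\Psi$ kills $\B''$ with no commutator computations at all), and the second follows from $\gamma_3(\B)\times\gamma_3(\B)\leq\psi_1(\B')$ (via the regular weakly branch property over $\B'$) once you know $\ker\Psi=\B''$, which you get from the rank computation in $\ZZ^4$. All the ingredients you use (Propositions \ref{prop:CommutatorIdentities}, \ref{prop:GeneratorsOfCommutatorSubgroup}, \ref{prop:HeisenbergIsQuotientOfBasilica}, Theorem \ref{thm:FactsAboutBasilica}(i)) precede the lemma, so there is no circularity, and you correctly avoid invoking Proposition \ref{prop:AbelianizationOfB'IsZ3}. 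What your approach buys is that it is more structural and less computational, and it establishes $\B'/\B''\cong\ZZ^3$ en route; in fact your injectivity computation with the vectors $(1,-1,-1,0)$, $(1,0,-1,0)$, $(0,0,0,-1)$ is essentially the paper's later proof of Proposition \ref{prop:AbelianizationOfB'IsZ3} (where the same three images appear as $(bc^{-1},b^{-1})$, $(b,b^{-1})$, $(1,c^{-1})$), folded into the proof of the lemma rather than deduced from it. The paper's version is more elementary in that it needs nothing about the subgroup structure of $H_3(\ZZ)$ beyond normality of $\gamma_3(\B)$, whereas yours relies on $\langle\overline b,\bar z\rangle$ being free abelian of rank $2$ in the Heisenberg group — a standard fact, but worth stating explicitly.
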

\begin{proof}
Let us first show that $\psi_1(\B'')\leq \gamma_3(\B)\times \gamma_3(\B)$. It follows from Proposition \ref{prop:GeneratorsOfCommutatorSubgroup} that $\B''$ is generated by the conjugates in $\B'$ of $[\alpha_{1,1},\alpha_{1,-1}]$, $[\alpha_{1,1},\alpha_{1,2}]$ and $[\alpha_{1,-1},\alpha_{1,2}]$. We find
\begin{align*}
\psi_1([\alpha_{1,1},\alpha_{1,-1}])&=[(a^{-1}ba,b^{-1}), (b,b^{-1})]\\
&=(a^{-1}b^{-1}ab^{-1}a^{-1}bab,1)\\
&=([[b,a],b],1) \in \gamma_3(\B)\times \gamma_3(\B),
\end{align*}
\begin{align*}
\psi_1([\alpha_{1,1},\alpha_{1,2}]) &=[(a^{-1}ba,b^{-1}), (1,[b,a])]\\
&=(1,[b^{-1},[b,a]])\\
&=(1,[[b,a],b^{-1}]^{-1})\in \gamma_3(\B)\times\gamma_3(\B),
\end{align*}
\begin{align*}
\psi_1([\alpha_{1,-1},\alpha_{1,2}])&=[(b,b^{-1}), (1,[b,a])]\\
&=(1,[b^{-1},[b,a]])\\
&=(1,[[b,a],b^{-1}]^{-1})\in \gamma_3(\B)\times\gamma_3(\B).
\end{align*}
Since $\gamma_3(\B)$ is a normal subgroup of $\B$, any conjugate of these elements will also belong to $\gamma_3(\B)\times\gamma_3(\B)$. Therefore, $\B''\leq \gamma_3(\B)\times\gamma_3(\B)$.

Now, let us show that $\gamma_3(\B)\times \gamma_3(\B) \leq \psi_1(\B'')$. Since $\B$ is generated by $a$ and $b$, we have that $\gamma_3(\B)$ is normally generated in $\B$ by $[[b,a],a]$ and $[[b,a],b]$. Since
\[[[b,a],a] = [(a^{-1}b^{-1}a,b), (1,b)] = 1,\]
we conclude that $\gamma_3(\B)$ is normally generated by $[[b,a],b]$. We have computed above that 
\[\psi_1([\alpha_{1,1},\alpha_{1,-1}])=([[b,a],b],1) \in \gamma_3(\B)\times 1.\]
Since $\B''$ is normal in $\B$ and since $\B$ is self-replicating, we conclude that $\gamma_3(\B)\times 1 \leq \psi_1(\B'')$. Conjugating by $b$, we then get that $1 \times \gamma_3(\B) \leq \psi_1(\B'')$, from which we conclude that $\gamma_3(\B)\times \gamma_3(\B) \leq \psi_1(\B'')$.
\end{proof}

Using this lemma, we can show that $\B/\B''$ is virtually nilpotent.

\begin{lemma}\label{lemma:BOverB''VirtuallyNilpotent}
The group $\B/\B''$ is virtually nilpotent.
\end{lemma}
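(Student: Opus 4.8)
The plan is to show that $\B/\B''$ contains a finite-index nilpotent subgroup. The key structural input is Lemma \ref{lemma:B''IsGamma3TimesGamma3}, which tells us that $\psi_1(\B'') = \gamma_3(\B)\times\gamma_3(\B)$. Since $\B$ is self-replicating and acts on a binary tree, I would first pass to the stabiliser of the first level $\St_\B(1)$, which has index $2$ in $\B$, and hence has index $2$ in $\B/\B''$ after quotienting. The advantage of working with $\St_\B(1)$ is that $\psi_1$ restricts to an injective homomorphism into $\B\times\B$, so I can analyse $\St_\B(1)/\B''$ through its image $\psi_1(\St_\B(1))/(\gamma_3(\B)\times\gamma_3(\B))$, which embeds into $(\B/\gamma_3(\B))\times(\B/\gamma_3(\B))$.

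Now the payoff is Proposition \ref{prop:HeisenbergIsQuotientOfBasilica}: we have $\B/\gamma_3(\B)\cong H_3(\ZZ)$, the discrete Heisenberg group, which is nilpotent of class $2$. Therefore $(\B/\gamma_3(\B))\times(\B/\gamma_3(\B))\cong H_3(\ZZ)\times H_3(\ZZ)$ is nilpotent (of class $2$), being a finite direct product of nilpotent groups. The image of $\St_\B(1)/\B''$ under $\psi_1$ sits inside this nilpotent group, and nilpotency passes to subgroups, so $\St_\B(1)/\B''$ is itself nilpotent. Since $\St_\B(1)/\B''$ has finite index (index $2$) in $\B/\B''$, we conclude that $\B/\B''$ is virtually nilpotent.

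To make this precise I would argue as follows. The composite $\St_\B(1)\xrightarrow{\psi_1}\B\times\B\to (\B/\gamma_3(\B))\times(\B/\gamma_3(\B))$ is a homomorphism whose kernel is exactly $\psi_1^{-1}(\gamma_3(\B)\times\gamma_3(\B)) = \B''$ by Lemma \ref{lemma:B''IsGamma3TimesGamma3}, using that $\psi_1$ is injective on $\St_\B(1)$. Hence it induces an injection of $\St_\B(1)/\B''$ into $H_3(\ZZ)\times H_3(\ZZ)$, realising $\St_\B(1)/\B''$ as a subgroup of a class-$2$ nilpotent group and so as a nilpotent group. Finally $[\B:\St_\B(1)]=2$ gives $[\B/\B'':\St_\B(1)/\B'']=2<\infty$, completing the argument.

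The main technical point to get right, rather than a deep obstacle, is verifying that the kernel of the composite map is precisely $\B''$: the inclusion $\B''\subseteq\ker$ is immediate from $\psi_1(\B'')\leq\gamma_3(\B)\times\gamma_3(\B)$, while the reverse inclusion requires that an element of $\St_\B(1)$ whose image lies in $\gamma_3(\B)\times\gamma_3(\B)$ already lies in $\B''$, which is exactly the content of the equality (not merely containment) in Lemma \ref{lemma:B''IsGamma3TimesGamma3} combined with injectivity of $\psi_1$ on $\St_\B(1)$. Everything else is a routine application of the facts that nilpotency is closed under subgroups and finite direct products and that virtual nilpotency is what we seek.
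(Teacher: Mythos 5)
Your proof is correct and follows essentially the same route as the paper: both embed $\St_\B(1)/\B''$ into $(\B/\gamma_3(\B))\times(\B/\gamma_3(\B))$ via $\psi_1$ using Lemma \ref{lemma:B''IsGamma3TimesGamma3} and the injectivity of $\psi_1$, and conclude from the nilpotency of this product and the finiteness of $[\B:\St_\B(1)]$. Your explicit verification that the kernel of the composite is exactly $\B''$ is a welcome spelling-out of a step the paper leaves implicit.
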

\begin{proof}
Thanks to Lemma \ref{lemma:B''IsGamma3TimesGamma3}, we have
\[\psi_1(\St_{\B}(1))/\psi_1(\B'') \leq \left(\B/\gamma_3(\B)\right)\times \left(\B/\gamma_3(\B)\right).\]
As the group $\left(\B/\gamma_3(\B)\right)\times \left(\B/\gamma_3(\B)\right)$ is nilpotent, $\psi_1(\St_{\B}(1))/\psi_1(\B'')$ is nilpotent. Since $\psi_1$ is injective, we have
\[\psi_1(\St_{\B}(1))/\psi_1(\B'')\cong \St_{\B}(1)/\B''.\]
As $\St_{\B}(1)$ is of finite index in $\B$, $\St_{\B}(1)/\B''$ is of finite index in $\B/\B''$. Hence, we found a nilpotent subgroup of finite index in $\B/\B''$.
\end{proof}

We can now finally conclude that every proper quotient of the Basilica group is virtually nilpotent, and thus in $\MF$.

\begin{prop}\label{prop:BasilicaIsJustNonVirtuallyNilpotent}
Every proper quotient of the Basilica group $\B$ is virtually nilpotent. In particular, every proper quotient of $\B$ is in $\MF$.
\end{prop}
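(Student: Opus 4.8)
The plan is to combine the two main ingredients already assembled in this subsection: the structural result Theorem~\ref{thm:QuotientsOfWeaklyBranchGroups} and the computation in Lemma~\ref{lemma:BOverB''VirtuallyNilpotent}. Recall from Theorem~\ref{thm:FactsAboutBasilica} that $\B$ is a regular weakly branch group over its derived subgroup $\B'$, so I would apply Theorem~\ref{thm:QuotientsOfWeaklyBranchGroups} with $G=\B$, $K=\B'$, and $\mathcal{P}$ the property of being nilpotent. The key point is that with this choice, the group $G/K'$ appearing in the theorem is precisely $\B/\B''$, since $K'=(\B')'=\B''$.

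First I would verify that nilpotency is a valid choice for $\mathcal{P}$: the class of nilpotent groups is closed under taking subgroups, quotients, and finite direct products, which are exactly the hypotheses required by Theorem~\ref{thm:QuotientsOfWeaklyBranchGroups}. (Closure under finite direct products holds because the nilpotency class of a finite product is the maximum of the nilpotency classes of the factors.) Second, I would invoke Lemma~\ref{lemma:BOverB''VirtuallyNilpotent}, which tells us that $\B/\B''$ is virtually nilpotent. Applying Theorem~\ref{thm:QuotientsOfWeaklyBranchGroups} then immediately yields that every proper quotient of $\B$ is virtually nilpotent.

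Finally, to obtain the statement that every proper quotient of $\B$ is in $\MF$, I would recall the standard fact, already noted in the paragraph preceding Theorem~\ref{thm:QuotientsOfWeaklyBranchGroups}, that every finitely generated virtually nilpotent group belongs to $\MF$. Since $\B$ is finitely generated, every quotient of $\B$ is finitely generated, so each proper quotient, being virtually nilpotent, lies in $\MF$. This completes the argument.

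I expect essentially no obstacle here, as all the substantive work has been done in the preceding lemmas; the proof is a short assembly. The only point requiring a moment's care is the bookkeeping identification $K'=\B''$ so that Theorem~\ref{thm:QuotientsOfWeaklyBranchGroups} feeds directly into Lemma~\ref{lemma:BOverB''VirtuallyNilpotent}, together with confirming that the class of nilpotent groups satisfies the three closure properties demanded by that theorem.
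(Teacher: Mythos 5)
Your proof is correct and follows exactly the paper's argument: apply Theorem \ref{thm:QuotientsOfWeaklyBranchGroups} with $K=\B'$ (so $K'=\B''$) and $\mathcal{P}$ the class of nilpotent groups, feed in Lemma \ref{lemma:BOverB''VirtuallyNilpotent}, and conclude with the fact that finitely generated virtually nilpotent groups lie in $\MF$. Your explicit check of the closure properties of nilpotency is a harmless addition the paper leaves implicit.
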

\begin{proof}
The fact that every proper quotient of Basilica is virtually nilpotent follows directly from Theorem \ref{thm:QuotientsOfWeaklyBranchGroups}, Lemma \ref{lemma:BOverB''VirtuallyNilpotent} and the fact that $\B$ is a regular weakly branch group over $\B'$. Since every finitely generated virtually nilpotent group is in $\MF$, we conclude that every proper quotient of $\B$ is in $\MF$.
\end{proof}

We can thus use Theorem \ref{thm:ProjectionsOFMaximalAreMaximal} to study maximal subgroups of the Basilica group, which we will do in the next section. Before we go on, however, let us quickly remark that by combining Lemma \ref{lemma:B''IsGamma3TimesGamma3} and Proposition \ref{prop:HeisenbergIsQuotientOfBasilica}, we can show that the set of generators of $\B'$ obtained in Proposition \ref{prop:GeneratorsOfCommutatorSubgroup} is minimal.

\begin{prop}\label{prop:AbelianizationOfB'IsZ3}
Let $\B$ be the Basilica group, $\B'$ be its derived subgroup and $\B''$ be its second derived subgroup. The map $\ZZ^3 \rightarrow \B'/\B''$ defined by sending the canonical generators of $\ZZ^3$ to $\alpha_{1,1},\alpha_{1,-1}$ and $\alpha_{1,2}$ is an isomorphism (where $\alpha_{1,1},\alpha_{1,-1}$ and $\alpha_{1,2}$ are defined in Notation \ref{notation:CommutatorsBasilica}). In particular, $\alpha_{1,1}$, $\alpha_{1,-1}$ and $\alpha_{1,2}$ form a minimal set of generators for $\B'$.
\end{prop}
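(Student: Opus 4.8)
The plan is to treat surjectivity and injectivity separately. Surjectivity is immediate from Proposition \ref{prop:GeneratorsOfCommutatorSubgroup}: since $\alpha_{1,1},\alpha_{1,-1},\alpha_{1,2}$ generate $\B'$, their images generate the abelian group $\B'/\B''$, so the map $\ZZ^3\to\B'/\B''$ is onto. The whole content is therefore injectivity, i.e. that any relation $\alpha_{1,1}^x\alpha_{1,-1}^y\alpha_{1,2}^z\in\B''$ forces $x=y=z=0$.

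To get a handle on $\B'/\B''$, I would first observe that $\B'\leq\St_{\B}(1)$ (the action on the first level factors through $\B/\B'\cong\ZZ^2\twoheadrightarrow C_2$, with $a\mapsto 1$ and $b\mapsto\sigma$), so $\psi_1$ restricts to an injective homomorphism on $\B'$. By Lemma \ref{lemma:B''IsGamma3TimesGamma3} we have $\psi_1(\B'')=\gamma_3(\B)\times\gamma_3(\B)$, and since $\psi_1|_{\B'}$ is injective the preimage of $\gamma_3(\B)\times\gamma_3(\B)$ in $\B'$ is exactly $\B''$. Composing $\psi_1$ with the quotient map thus yields an embedding
\[\B'/\B''\hookrightarrow(\B/\gamma_3(\B))\times(\B/\gamma_3(\B))\cong H_3(\ZZ)\times H_3(\ZZ),\]
the last isomorphism being Proposition \ref{prop:HeisenbergIsQuotientOfBasilica}. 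It then suffices to show that the images of the three generators span a free abelian subgroup of rank $3$.

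Next I would compute these images using the formulas already recorded in the proof of Proposition \ref{prop:CommutatorIdentities}, namely $\psi_1(\alpha_{1,1})=(a^{-1}ba,b^{-1})$, $\psi_1(\alpha_{1,-1})=(b,b^{-1})$ and $\psi_1(\alpha_{1,2})=(1,\alpha_{1,1}^{-1})$. Writing $\bar a,\bar b$ for the images of $a,b$ in $H_3(\ZZ)$ and $c=[\bar a,\bar b]$ for the central generator, class-$2$ nilpotency gives $\bar a^{-1}\bar b\bar a=\bar b c^{-1}$, so modulo $\gamma_3(\B)\times\gamma_3(\B)$ the three generators map to
\[(\bar b c^{-1},\bar b^{-1}),\qquad(\bar b,\bar b^{-1}),\qquad(1,c^{-1}).\]
The decisive point is that every coordinate here lies in the subgroup $\langle\bar b,c\rangle\leq H_3(\ZZ)$, which is \emph{free abelian of rank $2$} (here $\bar b$ has infinite order, $c$ is central of infinite order, and $\bar b^j=c^k$ only for $j=k=0$). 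This linearises the computation: the image of $\alpha_{1,1}^x\alpha_{1,-1}^y\alpha_{1,2}^z$ is $(\bar b^{x+y}c^{-x},\bar b^{-x-y}c^{-z})$, and setting it equal to the identity yields $x+y=0$, $x=0$ and $z=0$, hence $x=y=z=0$.

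The main obstacle is purely bookkeeping: transporting the generators through $\psi_1$ into the Heisenberg quotient and checking that their images all avoid the $\bar a$-direction, so that the problem collapses onto the visibly free abelian group $\langle\bar b,c\rangle\cong\ZZ^2$ in each factor. Once this is verified, rank-$3$ freeness gives both the isomorphism $\ZZ^3\cong\B'/\B''$ and the minimality of the generating set at once. No genuinely hard step remains; the only subtlety is keeping the commutator conventions consistent with those fixed in Notation \ref{notation:CommutatorsBasilica}.
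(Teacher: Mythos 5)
Your proposal is correct and follows essentially the same route as the paper: embed $\B'/\B''$ into $H_3(\ZZ)\times H_3(\ZZ)$ via $\psi_1$ together with Lemma \ref{lemma:B''IsGamma3TimesGamma3} and Proposition \ref{prop:HeisenbergIsQuotientOfBasilica}, compute the images $(bc^{-1},b^{-1})$, $(b,b^{-1})$, $(1,c^{-1})$ of the three generators, and read off injectivity from the exponents of $b$ and $c$. The computations and the conclusion match the paper's proof exactly.
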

\begin{proof}
Since $\psi_1$ is an injective map, we have
\[\B'/\B'' \cong \psi_1(\B')/\psi_1(\B'').\]
Now, by Lemma \ref{lemma:B''IsGamma3TimesGamma3}, we have $\psi_1(\B'') = \gamma_3(\B)\times \gamma_3(\B)$, where $\gamma_3(\B)=[\B',\B]$. It thus follows from Proposition \ref{prop:HeisenbergIsQuotientOfBasilica} that $\psi_1(\B')/\psi_1(\B'') \leq H_3(\ZZ) \times H_3(\ZZ)$, where $H_3(\ZZ)$ is the discrete Heisenberg group.

Let $f\colon \ZZ^3 \rightarrow \B'/\B''$ be the homomorphism sending $(1,0,0)$ to $\alpha_{1,1}$, $(0,1,0)$ to $\alpha_{1,-1}$ and $(0,0,1)$ to $\alpha_{1,2}$, and let $g\colon \B'/\B'' \rightarrow H_3(\ZZ) \times H_3(\ZZ)$ be the injective homomorphism implied above. To prove the result, it suffices to show that the kernel of $g\circ f$ is trivial.

By direct computation, we see that
\[g(\alpha_{1,1}) = (bc^{-1}, b^{-1}), \quad g(\alpha_{1,-1}) = (b,b^{-1}), \quad g(\alpha_{1,2})=(1, c^{-1})\]
where $H_3(\ZZ) = \langle a,b \mid [[a,b],a], [[a,b],b] \rangle$ and $c=[a,b]$. Now, let $(l,m,n)\in \ker(g\circ f)$ be an arbitrary element of the kernel of $g\circ f$. It follows from the above computations that
\[g\circ f (l,m,n) = (b^{l+m}c^{-l}, b^{-l-m}c^{-n})\]
and we quickly see that this is trivial if and only if $l=m=n=0$. Thus, $g\circ f$ is injective, which implies that $f$ is injective. By Proposition \ref{prop:GeneratorsOfCommutatorSubgroup}, it is also surjective and is thus an isomorphism.
\end{proof}

\subsection{Maximal subgroups of the Basilica group}

In this subsection, we use Theorem \ref{thm:ProjectionsOFMaximalAreMaximal} to prove that every maximal subgroup of the Basilica group $\B$ is of finite index. Although the details are very different, the general strategy of the proof is roughly the same as the one used by Pervova in \cite{Pervova05} to prove that the Grigorchuk group is in $\MF$. Namely, assuming that there exists a maximal subgroup of infinite index $M<\B$, we will show, using arguments of length reduction, that there must exist some vertex $v\in X^*$ such that $M_v=G$, thus contradicting Theorem \ref{thm:ProjectionsOFMaximalAreMaximal}.

To achieve this, we will require several intermediate steps. Before we begin, however, let us first fix some notation that will be useful throughout this section.

\begin{notation}
We will denote by $|\cdot|\colon \B \rightarrow \NN$ the \emph{word norm} with respect to the generating set $S=\{a,a^{-1},b,b^{-1}\}$. In other words, for $g\in \B$, we denote by $|g|$ the smallest word in the alphabet $S$ representing $g$. In what follows, we will generally make no distinction in the notation between a word in the generating set $S$ and the element it represents in the group $\B$ and rely on the context to distinguish between the two cases. In particular, if $w\in S^*$ is a word in the alphabet $S$, we will denote by $|w|$ the length of the corresponding element in $\B$, which may be smaller that the number of letters in $w$. A word $w=s_1\dots s_n\in S^*$ will be called a \emph{word of minimal length} or a \emph{geodesic word} if $|s_1\dots s_n| = n$.
\end{notation}

\begin{rem}
The map $|\cdot|\colon \B \rightarrow \NN$ is subadditive: for $g_1,g_2\in \B$, we have $|g_1g_2|\leq |g_1|+|g_2|$. In particular, if $w=s_1\dots s_is_{i+1}\dots s_n$ is a geodesic word, this implies that $|w|=|s_1\dots s_i|+|s_{i+1}\dots s_n|$ for all $1\leq i \leq n$. This means that subwords of geodesic words are always geodesic.
\end{rem}

\begin{notation}
Let $g,g_1,g_2\in \B$ and $\epsilon \in \{0,1\}$ be such that $\psi_1(g)=\sigma^{\epsilon}(g_1,g_2)$. From now on, we will frequently suppress the $\psi_1$ from the notation and simply write $g=\sigma^\epsilon (g_1,g_2)$.
\end{notation}

We begin our investigation with a series of lemmas establishing various bounds between the length of elements in $\B$ and the length of their projections.

\begin{lemma}\label{lemma:BasilicaNonEll1Expanding}
Let $g\in \B$ be an arbitrary element of the Basilica group, and let $g_1,g_2\in \B$, $\epsilon \in \{0,1\}$ be such that $g=\sigma^{\epsilon}(g_1,g_2)$. Then, $|g_1| + |g_2| \leq |g|$.
\end{lemma}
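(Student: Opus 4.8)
The plan is to take a geodesic word for $g$ and track how each of its letters distributes to the two sections $g_1$ and $g_2$. First I would record the first-level decomposition of each element of $S=\{a,a^{-1},b,b^{-1}\}$: starting from $\psi_1(a)=(1,b)$ and $\psi_1(b)=\sigma(a,1)$, one computes $\psi_1(a^{-1})=(1,b^{-1})$ and $\psi_1(b^{-1})=\sigma(1,a^{-1})$. The crucial observation is uniform across all four cases: for every $s\in S$, exactly one of the two sections of $s$ is trivial and the other is a single letter of $S$ (with the root permutation being $\sigma$ precisely when $s=b^{\pm1}$).

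Next I would write $g=s_1\cdots s_n$ with $s_i\in S$ and $n=|g|$, so that $\psi_1(g)=\prod_{i=1}^{n}\psi_1(s_i)$, the product being taken via the isomorphism $\psi_1$ of Proposition \ref{prop:Psin}. When this product is multiplied out, each section $g_1$ and $g_2$ becomes an ordered product of sections taken from the factors $\psi_1(s_i)$, where the assignment of each factor's two sections to the first or second coordinate is governed by the partial products of the root permutations $\sigma^{\epsilon_i}$. Each factor $\psi_1(s_i)$ contributes one of its sections to $g_1$ and the other to $g_2$; since one of these two sections is always trivial, the letter $s_i$ contributes exactly one letter of $S$ to the concatenated expressions for $g_1$ and $g_2$ taken together. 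Consequently these concatenations produce words $w_1,w_2\in S^*$ representing $g_1$ and $g_2$ whose combined number of letters is exactly $n$. As the word norm of an element is bounded above by the length of any word representing it, we obtain $|g_1|+|g_2|\le n=|g|$, which is the desired inequality.

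The only genuine work is the wreath-product bookkeeping needed to confirm that the reindexing by the root permutations sends each factor's single nontrivial section into exactly one of the two coordinates, and that the resulting concatenations really are $g_1$ and $g_2$; I expect this to be the main, though quite mild, obstacle. An equivalent alternative would be to argue by induction on $n$: writing $g=g's$ with $|g'|=n-1$ geodesic, applying the bound to $g'$, and checking that multiplying by a single $\psi_1(s)$ raises the combined section length by at most one, while being careful to track the swap of the two coordinates that occurs when $s=b^{\pm1}$.
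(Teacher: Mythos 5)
Your proof is correct and follows essentially the same route as the paper: the paper simply observes that each generator in $S=\{a,a^{-1},b,b^{-1}\}$ has one trivial section and one section that is a single generator, and concludes by induction on word length. Your write-up is a more detailed version of that same argument, with the wreath-product bookkeeping made explicit.
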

\begin{proof}
As $a=(1,b)$, $b=\sigma(a,1)$, $a^{-1} = (1,b^{-1})$ and $b^{-1} = \sigma(1,a^{-1})$, we see that the given inequality is true for the generating set $S=\{a,b,a^{-1},b^{-1}\}$. Therefore, by induction, it must be true for any element of $\B$.
\end{proof}

\begin{lemma}\label{lemma:LengthOfChildrenOfSquare}
Let $g=\sigma(g_1, g_2) \notin \St_\B(1)$ be an element of $\B$ that does not belong to the stabiliser of the first level, and let $\alpha, \beta\in \B$ be such that $g^2 = (\alpha, \beta)$. Then, $|\alpha|, |\beta| \leq |g|$.
\end{lemma}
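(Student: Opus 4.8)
The plan is to compute $g^2$ explicitly and then combine the contraction estimate of Lemma \ref{lemma:BasilicaNonEll1Expanding} with subadditivity of the word norm. Since $g\notin \St_\B(1)$ and the tree is binary, $g$ must induce the nontrivial permutation on the first level, so I would write $g=\sigma(g_1,g_2)$. Using the commutation rule $(g_1,g_2)\sigma=\sigma(g_2,g_1)$ to move the root permutation past a pair of sections, the square is
\[
g^2=\sigma(g_1,g_2)\,\sigma(g_1,g_2)=\sigma^2(g_2g_1,\,g_1g_2)=(g_2g_1,\,g_1g_2),
\]
so that $\alpha=g_2g_1$ and $\beta=g_1g_2$ (up to relabelling $\alpha$ and $\beta$).

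The key observation is that because $g$ swaps the two subtrees, each section of $g^2$ is a product in which both sections $g_1$ and $g_2$ of $g$ occur \emph{exactly once}; the hypothesis $g\notin\St_\B(1)$ is precisely what produces this crossing (as opposed to the sections $g_1^2$ and $g_2^2$ one would get inside $\St_\B(1)$). Consequently a single application of subadditivity gives $|\alpha|=|g_2g_1|\leq |g_1|+|g_2|$ and, symmetrically, $|\beta|=|g_1g_2|\leq |g_1|+|g_2|$. Lemma \ref{lemma:BasilicaNonEll1Expanding} then yields $|g_1|+|g_2|\leq |g|$, and chaining these inequalities gives $|\alpha|,|\beta|\leq |g|$, as claimed.

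I do not expect any serious obstacle here: the statement reduces to a one-line computation of $g^2$ followed by two estimates already available in the excerpt. The only point requiring care is the bookkeeping of the root permutation when forming $g\cdot g$, namely verifying that the sections combine in the crossed orders $g_2g_1$ and $g_1g_2$ rather than being squared. It is exactly this crossing that keeps each section of $g^2$ a single product of $g_1$ and $g_2$, so that no spurious factor of $2$ enters and the bound $|g|$ (rather than $2|g|$) is obtained.
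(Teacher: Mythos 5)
Your proof is correct and follows essentially the same route as the paper: compute $g^2=(g_2g_1,g_1g_2)$ and combine subadditivity of the word norm with Lemma \ref{lemma:BasilicaNonEll1Expanding} to get $|\alpha|,|\beta|\leq|g_1|+|g_2|\leq|g|$. The extra commentary about the crossing of sections is accurate but not needed beyond the computation itself.
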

\begin{proof}
We have $g^2 = \sigma(g_1,g_2)\sigma(g_1,g_2) = (g_2g_1, g_1g_2)$. Hence, thanks to Lemma \ref{lemma:BasilicaNonEll1Expanding}, we have $|\alpha| \leq |g_2| + |g_1| \leq |g|$, and likewise, $|\beta|\leq |g|$.
\end{proof}

\begin{lemma}\label{lemma:ForbiddenSubwordBBinverse}
Let $g=\sigma^{\epsilon}(g_1,g_2) \in \B$ be an arbitrary element of the Basilica group, where $\epsilon\in \{0,1\}$, and let $x_1x_2\dots x_n \in S^*$ be a word of minimal length representing $g$, where $S=\{a,b,a^{-1},b^{-1}\}$. If there exist $1\leq i<j\leq n$ such that $x_i=b$, $x_j=b^{-1}$, then $|g_1| + |g_2| < |g|=n$.
\end{lemma}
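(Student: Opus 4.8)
The plan is to isolate a sub-block of the geodesic of the form $b\,w\,b^{-1}$, where $w$ is a word in $\{a,a^{-1}\}$ only, and to show that this block alone forces a loss of length $2$ upon projection, while the remainder of the word can only preserve or decrease length by Lemma \ref{lemma:BasilicaNonEll1Expanding}. Throughout I write $\ell(g):=|g_1|+|g_2|$ for $g=\sigma^\epsilon(g_1,g_2)$, so that the claim is $\ell(g)<n$.

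First I would record that $\ell$ is subadditive. For any $g,h\in\B$, each first-level section of $gh$ is a product of one section of $g$ and one section of $h$, and across the two sections of $gh$ each of $g_1,g_2,h_1,h_2$ occurs exactly once; the triangle inequality then gives $\ell(gh)\le\ell(g)+\ell(h)$. (This is precisely the mechanism behind Lemma \ref{lemma:BasilicaNonEll1Expanding}, which already yields $\ell(g)\le|g|$.)

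Second, I would locate the right pair of indices. The hypothesis only provides some $b$ preceding some $b^{-1}$, so I must extract from it a pair with nothing of the form $b^{\pm1}$ in between. Fix a $b^{-1}$, say at position $j_0$, that is preceded by some $b$; let $i$ be the last position before $j_0$ with $x_i=b$, and let $j$ be the first position after $i$ with $x_j\in\{b,b^{-1}\}$. By maximality of $i$ we cannot have $x_j=b$ with $j<j_0$, and $x_{j_0}=b^{-1}$ forces $x_j=b^{-1}$; minimality of $j$ ensures no $b^{\pm1}$ lies strictly between $i$ and $j$. Writing $u=x_1\cdots x_{i-1}$, $w=x_{i+1}\cdots x_{j-1}$ and $v=x_{j+1}\cdots x_n$, we obtain $g=u\,(b\,w\,b^{-1})\,v$, where $w$ involves only $a^{\pm1}$; since subwords of geodesics are geodesic, $|u|=i-1$, $|w|=j-i-1$ and $|v|=n-j$.

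Third comes the key computation. Since $\psi_1(w)=(1,\overline{w})$, where $\overline{w}$ denotes $w$ with every $a^{\pm1}$ replaced by $b^{\pm1}$, a short manipulation using $\psi_1(b)=\sigma(a,1)$ and $\psi_1(b^{-1})=\sigma(1,a^{-1})$ gives
\[\psi_1(b\,w\,b^{-1})=(\overline{w},1),\]
so that $\ell(b\,w\,b^{-1})=|\overline{w}|\le|w|=j-i-1$; this is where the cancellation $a\,a^{-1}=1$ takes effect. Combining subadditivity with Lemma \ref{lemma:BasilicaNonEll1Expanding} applied to $u$ and $v$,
\[|g_1|+|g_2|=\ell(g)\le\ell(u)+\ell(b\,w\,b^{-1})+\ell(v)\le(i-1)+(j-i-1)+(n-j)=n-2<n,\]
which is the desired strict inequality. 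The only genuinely delicate point is the index selection in the second step: guaranteeing a block $b\,w\,b^{-1}$ with $w\in\langle a\rangle$ is exactly what makes the projection visibly lose length, and everything else (the subadditivity of $\ell$ and the explicit section computation) is routine.
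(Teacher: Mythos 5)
Your proof is correct and follows essentially the same route as the paper: isolate a geodesic subword $b\,a^{k}\,b^{-1}$, compute that it projects to $(b^{k},1)$ with a length loss of at least $2$, and propagate this through subadditivity and Lemma \ref{lemma:BasilicaNonEll1Expanding}. You merely spell out two points the paper leaves implicit (the extraction of a block with no $b^{\pm1}$ inside, and the subadditivity of $|g_1|+|g_2|$), which is fine.
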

\begin{proof}
As the word $x_1x_2\dots x_n$ is reduced (otherwise, it would not be of minimal length), it follows from the hypothesis that it must contain a subword of the form $ba^kb^{-1}$ for some $k\in \ZZ^*$. Seen as an element of $\B$, we have
\[ba^{k}b^{-1} = \sigma(a,1)(1,b^{k})(a^{-1},1)\sigma = (b^k,1).\]
Since $ba^kb^{-1}$ is a subword of a geodesic word, we must have $|ba^kb^{-1}| = |k|+2$. Indeed, otherwise, we could replace it by a shorter word representing the same element. On the other hand, $|b^k| \leq |k|$. Thus, there is a difference of at least $2$ between the length of $ba^{k}b^{-1}$ and the sum of the length of its children. By using subadditivity, Lemma \ref{lemma:BasilicaNonEll1Expanding}, and the fact that every subword of a geodesic word must again be a geodesic, we can conclude that $|g_1|+|g_2| \leq |g|-2 < |g|$.
\end{proof}

\begin{lemma}\label{lemma:ForbiddenWordBsquared}
Let $g=\sigma^{\epsilon}(g_1,g_2) \in \B$ be an arbitrary element of the Basilica group, where $\epsilon\in \{0,1\}$, and let $x_1x_2\dots x_n\in S^*$ be a word in the alphabet $S=\{a,b,a^{-1},b^{-1}\}$ of minimal length representing $g$. If $x_1x_2\dots x_n$ contains a subword of the form $b^{-2}a^{k}b^2$, then $|g_1| + |g_2| < |g| = n$.
\end{lemma}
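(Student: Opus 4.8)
The plan is to mirror the proof of Lemma~\ref{lemma:ForbiddenSubwordBBinverse}, replacing the subword $ba^kb^{-1}$ by $b^{-2}a^kb^2$. The first step is a direct computation of the action of this subword on the first level. Using $a=(1,b)$ and $b=\sigma(a,1)$, one finds $b^2=(a,a)$ and $b^{-2}=(a^{-1},a^{-1})$, both lying in $\St_\B(1)$, and hence
\[
b^{-2}a^kb^2=(a^{-1},a^{-1})(1,b^k)(a,a)=(1,a^{-1}b^ka).
\]
Thus this subword stabilises the first level, and its two first-level projections are $1$ and $a^{-1}b^ka$.

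The second step is a length comparison on this piece. Since $b^{-2}a^kb^2$ is a subword of the geodesic word $x_1\cdots x_n$, it is itself geodesic, so its word norm equals its number of letters, namely $|b^{-2}a^kb^2|=|k|+4$ (note $k\ne 0$, since otherwise $b^{-2}b^2$ would not be reduced). On the other hand, the sum of the lengths of its two projections is $|1|+|a^{-1}b^ka|\le 0+(|k|+2)=|k|+2$, so passing to the first level saves at least $2$ in length on this piece.

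The final step globalises this saving by subadditivity, exactly as in the previous lemma. I would write $x_1\cdots x_n=w_1\,(b^{-2}a^kb^2)\,w_2$, where $w_1,w_2$ are the (geodesic) prefix and suffix, and set $\psi_1(w_1)=\tau_1(p_1,p_2)$, $\psi_1(w_2)=\tau_2(q_1,q_2)$. Because $\psi_1$ is a homomorphism, each of $g_1,g_2$ is a product of first-level projections of the three pieces, with each such projection ($p_1,p_2,1,a^{-1}b^ka,q_1,q_2$) occurring exactly once among the factors of $g_1$ and $g_2$ combined; subadditivity of $|\cdot|$ then gives
\[
|g_1|+|g_2|\le\bigl(|p_1|+|p_2|\bigr)+\bigl(|1|+|a^{-1}b^ka|\bigr)+\bigl(|q_1|+|q_2|\bigr).
\]
By Lemma~\ref{lemma:BasilicaNonEll1Expanding} applied to $w_1$ and $w_2$ we have $|p_1|+|p_2|\le|w_1|$ and $|q_1|+|q_2|\le|w_2|$, while the middle term is at most $|k|+2$. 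Since subwords of a geodesic are geodesic, $|g|=|w_1|+(|k|+4)+|w_2|$, and combining these estimates yields $|g_1|+|g_2|\le|w_1|+(|k|+2)+|w_2|=|g|-2<|g|$, as required. I do not expect a genuine obstacle here: the only points needing care are the wreath-recursion computation of $b^{-2}a^kb^2$ in the first paragraph and the bookkeeping in the subadditivity argument ensuring that each piece's projections are counted exactly once, both of which are routine given the template of Lemma~\ref{lemma:ForbiddenSubwordBBinverse}.
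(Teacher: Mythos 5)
Your proof is correct and follows essentially the same route as the paper: the wreath-recursion computation $b^{-2}a^kb^2=(1,a^{-1}b^ka)$, the observation that this geodesic subword of length $|k|+4$ has children of total length at most $|k|+2$, and the globalisation via subadditivity and Lemma \ref{lemma:BasilicaNonEll1Expanding}. Your write-up is merely more explicit about the bookkeeping that the paper leaves implicit by analogy with Lemma \ref{lemma:ForbiddenSubwordBBinverse}.
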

\begin{proof}
In $\B$, we have
\[b^{-2}a^kb^2 = (a^{-1},a^{-1})(1,b^{k})(a,a) = (1,a^{-1}b^{k}a).\]
As in the proof of Lemma \ref{lemma:ForbiddenSubwordBBinverse}, we observe that $|b^{-2}a^{k}b^{2}| = |k|+4$ and $|1|+|a^{-1}b^ka|\leq k+2$ and thus conclude that $|g_1| + |g_2| \leq n-2 < n=|g|$.
\end{proof}

In addition to these facts regarding length contraction of elements of $\B$, we will also need to know the equivalence classes of the projections of some elements modulo the commutator subgroup $\B'$, which we study in the following lemma.

\begin{notation}
Let $g_1,g_2\in \B$ be two arbitrary elements. We will write $g_1\equiv_{\B'} g_2$ if $g_1\B' = g_2\B'$.
\end{notation}

\begin{lemma}\label{lemma:CongruenceOfProjections}
Let $g\notin \St_\B(1)$ and $g^2=(g_1,g_2)$. Then,
\begin{align*}
g\equiv_{\B'} ab &\Rightarrow g_1\equiv_{\B'} g_2 \equiv_{\B'} ab\\
g\equiv_{\B'} ab^{-1} &\Rightarrow g_1\equiv_{\B'} g_2 \equiv_{\B'} a^{-1}b \\
g\equiv_{\B'} a^{-1}b &\Rightarrow g_1\equiv_{\B'} g_2 \equiv_{\B'} ab^{-1}.
\end{align*}
\end{lemma}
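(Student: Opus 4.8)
The plan is to reduce the three implications to a single statement about the abelianisation of $\B$, namely that passing from $g$ to the product of its two first-level sections induces the coordinate swap $a\leftrightarrow b$ on $\B/\B'\cong\langle a\rangle\times\langle b\rangle\cong\ZZ^2$ (Theorem \ref{thm:FactsAboutBasilica}).

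First I would use the hypothesis $g\notin\St_\B(1)$ to write $g=\sigma(h_1,h_2)$ for suitable $h_1,h_2\in\B$. Squaring exactly as in the proof of Lemma \ref{lemma:LengthOfChildrenOfSquare} gives $g^2=\sigma(h_1,h_2)\sigma(h_1,h_2)=(h_2h_1,h_1h_2)$, so that $g_1=h_2h_1$ and $g_2=h_1h_2$. These sections are conjugate (indeed $g_2=h_1g_1h_1^{-1}$), hence have the same image in the abelianisation, and moreover $g_1\equiv_{\B'}g_2\equiv_{\B'}h_1h_2$. It therefore suffices to determine the class of $h_1h_2$ modulo $\B'$ from the class of $g$.

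The key step is to show that the assignment sending $g=\sigma^\epsilon(h_1,h_2)$ to the class $h_1h_2\B'$ defines a \emph{homomorphism} $\Psi\colon\B\to\B/\B'$. This map is well defined because $\psi_1$ is injective (Proposition \ref{prop:Psin}), so the sections $h_1,h_2$ are uniquely determined by $g$. To check multiplicativity, write $g=\sigma^\epsilon(g_1,g_2)$ and $h=\sigma^\delta(h_1,h_2)$; then the sections of the product satisfy $(gh)_i=g_{\sigma^\delta(i)}h_i$ for $i=1,2$, where $\sigma^\delta$ is read as a permutation of the two first-level vertices. Since $\B/\B'$ is abelian and $\{\sigma^\delta(1),\sigma^\delta(2)\}=\{1,2\}$, the class $(gh)_1(gh)_2\B'=g_{\sigma^\delta(1)}g_{\sigma^\delta(2)}h_1h_2\B'$ regroups as $g_1g_2\B'\cdot h_1h_2\B'=\Psi(g)\Psi(h)$. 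Being a homomorphism into an abelian group, $\Psi$ factors through $\B/\B'$ and so is determined by its values on $a$ and $b$.

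Finally I would evaluate $\Psi$ on the generators using $a=(1,b)$ and $b=\sigma(a,1)$: the product of the sections of $a$ is $1\cdot b$ and that of $b$ is $a\cdot 1$, giving $\Psi(a)\equiv_{\B'}b$ and $\Psi(b)\equiv_{\B'}a$. Thus $\Psi$ acts as $a^pb^q\mapsto a^qb^p$ on $\B/\B'$. Combining this with the first step, $g_1\equiv_{\B'}g_2\equiv_{\B'}\Psi(g)$, and substituting $g\equiv_{\B'}ab$, $g\equiv_{\B'}ab^{-1}$, $g\equiv_{\B'}a^{-1}b$ yields $\Psi(g)\equiv_{\B'}ab$, $\Psi(g)\equiv_{\B'}a^{-1}b$, $\Psi(g)\equiv_{\B'}ab^{-1}$ respectively, which are precisely the three implications. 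The only genuinely delicate point is keeping the wreath-product convention straight in the verification that $\Psi$ is a homomorphism; once that is fixed, the swap $a\leftrightarrow b$ carried out by $\Psi$ is exactly what produces the interchange of exponents visible in the statement.
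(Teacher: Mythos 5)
Your argument is correct, but it is organised quite differently from the paper's. The paper proves the three implications by three explicit wreath-product computations: it writes $g=abz$ (resp.\ $g=ab^{-1}z$, $g=a^{-1}bz$) with $z=(z_1,z_2)\in\B'$, expands $g^2$ coordinatewise, and then quotes Lemma 5 of Grigorchuk--\.{Z}uk (the fact that $z_1\equiv_{\B'}z_2^{-1}$ for $z\in\B'$) to identify the classes of the two coordinates. You instead note that $g^2=(h_2h_1,h_1h_2)$ has conjugate coordinates and that $g\mapsto h_1h_2\B'$ defines a homomorphism $\Psi\colon\B\to\B/\B'$ swapping $a$ and $b$, so that all three implications collapse to the single identity $\Psi(a^pb^q)\equiv_{\B'}a^qb^p$. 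This is a clean, self-contained packaging: your multiplicativity check via $(gh)_v=g_{h\cdot v}h_v$ is the correct section cocycle for the paper's conventions, and your $\Psi$ in fact reproves the cited Lemma 5, since for $z\in\B'$ it gives $z_1z_2\equiv_{\B'}1$. The only hypothesis you use implicitly is that first-level sections of arbitrary elements of $\B$ (not just of level stabilisers) again lie in $\B$; this holds because the sections of $a^{\pm1},b^{\pm1}$ lie in $\{1,a^{\pm1},b^{\pm1}\}$, and the paper uses it tacitly throughout. Both proofs are valid; yours trades the external citation for a short conceptual verification, at the cost of introducing and checking the auxiliary homomorphism.
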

\begin{proof}
If $g=abz$ for some $z\in \B'$ with $z=(z_1,z_2)$, then
\[g^2 = (1,b)\sigma(a,1)(z_1,z_2)(1,b)\sigma(a,1)(z_1,z_2) = (z_2baz_1,baz_1z_2).\]
According to Lemma 5 of \cite{GrigorchukZuk02}, we have $z_1 \equiv_{\B'} z_2^{-1}$, so the result follows.
Similarly, if $g=ab^{-1}z$, we have
\[g^2 = (1,b)(a^{-1},1)\sigma(z_1,z_2)(1,b)(a^{-1},1)\sigma(z_1,z_2) = (a^{-1}z_2bz_1, bz_1a^{-1}z_2),\]
and if $g=a^{-1}bz$, we have
\[g^2=(1,b^{-1})\sigma(a,1)(z_1,z_2)(1,b^{-1})\sigma(a,1)(z_1,z_2) = (z_2b^{-1}az_1, b^{-1}az_1z_2).\]
\end{proof}

We are now almost in position to prove that any subgroup of $\B$ that is prodense must project to $\B$ on some vertex. The next few lemmas form the crucial part of the proof.

\begin{lemma}\label{lemma:CanFindAB}
Let $g\in \B$ be such that $g\equiv_{\B'} ab $. Then, there exist a vertex $u\in X^*$ in the rooted tree $X^*$ and an element $g' \in \St_\B(u) \cap \langle g \rangle$ such that $\varphi_u(g') = ab$.
\end{lemma}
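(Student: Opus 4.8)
The plan is to induct on the word length $|g|$ with respect to the generating set $\{a,a^{-1},b,b^{-1}\}$, using the self-similar structure together with the congruence Lemma~\ref{lemma:CongruenceOfProjections} to pass to a \emph{strictly shorter} element still congruent to $ab$ modulo $\B'$, and then lifting the vertex and power found for that shorter element back up to $g$. First note that $g\equiv_{\B'}ab$ forces $g\notin\St_\B(1)$: the action on the first level factors as a homomorphism $\B\to\Sym(X)\cong\ZZ/2\ZZ$ sending $a\mapsto 0$ and $b\mapsto\sigma$, so it kills $\B'$, and hence $g$ and $ab$ have the same (nontrivial) image on $L_1$. Thus $g^2\in\St_\B(1)$, we may write $g^2=(g_1,g_2)$, and by Lemma~\ref{lemma:CongruenceOfProjections} both $g_1\equiv_{\B'}ab$ and $g_2\equiv_{\B'}ab$, while Lemma~\ref{lemma:LengthOfChildrenOfSquare} gives $|g_1|,|g_2|\le|g|$. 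Any $h\equiv_{\B'}ab$ satisfies $|h|\ge 2$, since its image in $\B/\B'\cong\ZZ^2$ is $(1,1)$, which no single generator realises; the only geodesics of length $2$ in this coset are $ab$ and $ba$. A direct computation gives $(ab)^2=(ba,ba)$ and $(ba)^2=(ba,ab)$, so the base case $|g|=2$ is immediate: if $g=ab$ take $u$ the root and $g'=g$, while if $g=ba$ take $u=\1$ and $g'=g^2$, for which $\varphi_\1(g^2)=ab$.

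For the inductive step the key claim is that \emph{if $g\equiv_{\B'}ab$ and $|g|>2$, then $|g_i|<|g|$ for at least one $i$}. Granting this, assume without loss of generality $|g_1|<|g|$. As $g^2\in\St_\B(\0)$ and $\varphi_\0$ is a homomorphism there with $\varphi_\0(g^2)=g_1$, the induction hypothesis applied to $g_1$ yields an integer $k$ and a vertex $u'$ with $g_1^{k}\in\St_\B(u')$ and $\varphi_{u'}(g_1^{k})=ab$. Then $g^{2k}$ stabilises $u=\0 u'$, and since sections compose, $\varphi_{u}(g^{2k})=\varphi_{u'}(\varphi_\0(g^{2k}))=\varphi_{u'}(g_1^{k})=ab$; thus $g'=g^{2k}\in\langle g\rangle$ does the job, and the case $|g_2|<|g|$ is symmetric with $u=\1u'$.

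It remains to establish the key claim, which is where the length lemmas enter. Writing $g=\sigma(h_1,h_2)$, we have $g^2=(h_2h_1,h_1h_2)$ with $|g_i|\le|h_1|+|h_2|\le|g|$. Fix a geodesic word $w$ for $g$; since $g\equiv_{\B'}ab$, its $a$-exponent and $b$-exponent are both $1$. If $w$ contains a letter $b$ occurring before a letter $b^{-1}$, Lemma~\ref{lemma:ForbiddenSubwordBBinverse} gives $|h_1|+|h_2|<|g|$, so both sections of $g^2$ are strictly shorter; the same holds, via Lemma~\ref{lemma:ForbiddenWordBsquared}, if $w$ contains a subword $b^{-2}a^{k}b^{2}$. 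This reduces the claim to geodesics in which every $b^{-1}$ precedes every $b$ and which contain no $b^{-2}a^{k}b^{2}$. If $w$ contains no $b^{-1}$ at all, then $w=a^{s}ba^{t}$ with $s+t=1$, and a direct computation gives $g^2=(ba,\,b^{s}ab^{t})$, whose first section $ba$ has length $2<|g|$. The delicate case is the remaining one, where $w$ contains both $b^{-1}$'s and $b$'s (the former all preceding the latter) but no $b^{-2}a^{k}b^{2}$.

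I expect this last case to be the main obstacle. The two forbidden-subword lemmas detect only cancellation occurring \emph{within} a single section, whereas here the length drop is produced by boundary cancellation across the junction of $h_1$ and $h_2$ in the products $h_1h_2$ or $h_2h_1$ (as already illustrated by $g=a^{2}ba^{-1}$, where $|h_1|+|h_2|=|g|$ yet $h_2h_1=b^{-1}b^{2}a=ba$ collapses). One therefore has to argue by hand, tracking the wreath recursion for such words and locating a cancelling pair at the ends of $h_1$ and $h_2$, that no geodesic of this shape can have both $|h_2h_1|=|h_1h_2|=|g|$ when $|g|>2$.
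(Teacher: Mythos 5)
Your overall strategy is the paper's: induct on $|g|$, square to land in $\St_\B(1)$, use Lemma \ref{lemma:CongruenceOfProjections} to preserve the congruence class and Lemma \ref{lemma:LengthOfChildrenOfSquare} to control length, and lift the vertex found for a shorter element back through $\varphi_\0$ or $\varphi_\1$. The base case and the lifting step are correct. The gap is exactly where you flag it: your inductive step rests entirely on the ``key claim'' that for $|g|>2$ at least one section of $g^2$ is \emph{strictly} shorter than $g$, and in the remaining case (geodesics containing both $b^{-1}$'s and $b$'s, the former all preceding the latter) you do not prove it --- you only say that ``one has to argue by hand''. Since this is precisely the combinatorial heart of the lemma, the proof as written is incomplete. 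For what it is worth, the claim is true and the mechanism you conjecture (a cancelling pair at the junction) does close it: the number of $b^{\pm1}$-letters in a geodesic for $g$ is odd, so the first such letter has an even number of $b^{\pm1}$-letters to its right; in the remaining case it is a $b^{-1}$, hence contributes the \emph{leading} $a^{-1}$ of $h_2$ (every letter before it is an $a^{\pm1}$ with an odd count to its right, landing in $h_1$), while the last $b^{\pm1}$-letter is a $b$ contributing the \emph{trailing} $a$ of $h_1$ (everything after it lands in $h_2$). Thus when $|h_1|+|h_2|=|g|$ the concatenated geodesic for $h_1h_2$ contains $aa^{-1}$ at the junction and $|h_1h_2|\le|g|-2$.

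You should also be aware that the paper does not prove your key claim at all, so you cannot appeal to its argument to fill the hole: it concedes that both $|h_1h_2|$ and $|h_2h_1|$ may equal $|g|$ and instead descends \emph{two} levels. Namely, if the geodesic for $h_1$ contains $b$ and that for $h_2$ contains $b^{-1}$ (or symmetrically), the concatenated geodesic for $h_1h_2=\varphi_\1(g^2)$ (resp.\ $h_2h_1$) contains $b$ before $b^{-1}$, so Lemma \ref{lemma:ForbiddenSubwordBBinverse} applies to \emph{that} element, and the sections of $(h_1h_2)^2=\varphi_\1(g^4)$ are strictly shorter, which restarts the induction at a grandchild vertex; the residual case forces one of $h_1,h_2$ to be a power of $a$, and the corresponding words are classified by hand down to $g\in\{ab,ba\}$. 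So your route, once the junction cancellation above is actually written out, is a legitimate and arguably cleaner alternative to the paper's two-level descent; but as submitted the decisive step is asserted rather than proved.
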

\begin{proof}
Let us proceed by induction on the length of $g$.

By definition, the elements of length $1$ of $\B$ are $a,b,a^{-1},b^{-1}$, none of which are congruent to $ab$ modulo $\B'$ by Theorem \ref{thm:FactsAboutBasilica}, so the case $|g|=1$ is impossible. For $|g|=2$, by the same theorem, the only possibilities are $g=ab$ or $g=ba$. The case $g=ab$ is trivial. If $g=ba$, we have $g^2 = baba = (ba,ab)$, and so $\varphi_{\1}(g^2)=ab$.

Now, let us assume that the result is true for any $h\in \B$ such that $h\equiv_{\B'} ab$ and $|h|<n$ for some $n\in \NN$, and let $g\in \B$ be such that $g\equiv_{\B'} ab$ and $|g|=n$.

Since $g\equiv_{\B'} ab$, we must have $g\notin \St_\B(1)$, so $g=\sigma(g_1,g_2)$. Therefore, we have $g^2 = (g_2g_1, g_1g_2)$. By Lemma \ref{lemma:CongruenceOfProjections}, $g_2g_1\equiv_{\B'} g_1g_2 \equiv_{\B'} ab$, and by Lemma \ref{lemma:LengthOfChildrenOfSquare}, $|g_2g_1|, |g_1g_2| \leq |g|=n$. If $|g_2g_1|<n$ or $|g_1g_2|<n$, we can then conclude by induction. Otherwise, we must have $|g_2g_1|=|g_1g_2|=n$. Therefore, the words representing $g_1$ and $g_2$ obtained from a geodesic of $g$ by the substitution $a\mapsto (1,b)$ and $b\mapsto \sigma(a,1)$ must be geodesics, and so must their concatenations $g_2g_1$ and $g_1g_2$ (since the sum of the length of the words for $g_1$ and $g_2$, before any reduction, is exactly $n$).

Let us write $g_1g_2 = \sigma(\alpha,\beta)$. If the geodesic word for $g_1$ discussed above contains $b$ and the one for $g_2$ contains $b^{-1}$, then by Lemma \ref{lemma:ForbiddenSubwordBBinverse}, $|\alpha| + |\beta|<n$. Therefore, $(g_1g_2)^2 = (\beta\alpha,\alpha\beta)$ with $|\alpha\beta|<n$, $\alpha\beta \equiv_{\B'} ab$. Hence, we can conclude by induction. Likewise, if $g_2$ contains $b$ and $g_1$ contains $b^{-1}$, we can conclude by induction by using the projections of $(g_2g_1)^2$.

Since it follows from Theorem \ref{thm:FactsAboutBasilica} that the sum of the exponents of $a$ in any word representing $g$ is $1$, the exponents of $b$ in $g_1$ and $g_2$ must sum up to $1$. Hence, if $g_1$ and $g_2$ both contain some $b$, one of them must also contain $b^{-1}$. Likewise, if both contain some $b^{-1}$, then one of them must contain $b$. Hence, the only remaining case is if $g_1=a^k$ or $g_2=a^k$ for some $k\in \ZZ$, with $|g_1g_2|=|g_2g_1|=|g|$. We will show that this can only occur if $g=ab$ or $g=ba$.

Let us notice that $a^{k_1}b^{2l+1}a^{k_2} = \sigma(b^{k_1}a^{l+1}, a^lb^{k_2})$. Hence, if $g$ contains a subword of the form $a^{k_1}b^{2l+1}a^{k_2}$ with $k_1,k_2\in \ZZ^*$ and $l\in \ZZ$, then both $g_1$ and $g_2$ contain some non-trivial power of $b$. Hence, if $g_1=a^{k}$ or $g_2=a^{k}$, then we must have
\[g=b^{2l_1+1}a^{k_1}b^{2l_2}a^{k_2} \dots b^{2l_i}a^{k_i}\]
or
\[g=a^{k_1}b^{2l_1}a^{k_2}b^{2l_2} \dots a^{k_i}b^{2l_i+1}\]
with $\sum_{j=1}^{i} l_j = 0$ and $\sum_{j=1}^{i}k_j = 1$. Indeed, we just saw that in a geodesic word representing $g$, odd powers of $b$ cannot be sandwiched between non-zero powers of $a$. This means that odd powers of $b$ must be either at the very beginning or at the very end of the word. Hence, there are only two possible positions, which implies that there are at most two odd powers of $b$. As the sum of the powers of $b$ must be $1$, we conclude that the word for $g$ must contain exactly one $b$ with an odd power, either at the beginning or at the end, thus obtaining the two possibilities above.

If $g=b^{2l_1+1}a^{k_1}b^{2l_2}a^{k_2} \dots b^{l_i}a^{k_i}$, it follows from Lemmas \ref{lemma:ForbiddenSubwordBBinverse} and \ref{lemma:ForbiddenWordBsquared} that $g=ba$ or $g=b^{-1}a^{k_1}b^2a^{k_2}$ with $k_1+k_2=1$. Indeed, otherwise, $g$ would contain a subword of the form $ba^{k}b^{-1}$ or $b^{-2}a^{k}b^2$, which contradicts the hypothesis that $|g_1g_2| = |g_2g_1| = n$. If $g=b^{-1}a^{k_1}b^2a^{k_2}$, we have $g^2 = (a^{-1}b^{k_1}ab^{k_2}a, b^{k_1}ab^{k_2})$, and $|b^{k_1}ab^{k_2}| \leq |k_1|+|k_2| + 1 < |k_1|+|k_2|+3 = |g|$, a contradiction. Hence, the only possible case is $g=ba$.

Similarly, if $g=a^{k_1}b^{l_1}a^{k_2}b^{l_2} \dots a^{k_i}b^{2l_i+1}$, then unless $g=ab$, $g$ must contain a subword of the form $ba^kb^{-1}$ or $b^{-2}a^{k}b^2$, which is impossible according to Lemmas \ref{lemma:ForbiddenSubwordBBinverse} and \ref{lemma:ForbiddenWordBsquared}.

This concludes the proof.
\end{proof}

\begin{lemma}\label{lemma:CanFindBInverseA}
Let $g\in \B$ be such that $g\equiv_{\B'} ab^{-1}$. Then, there exist a vertex $u\in X^*$ and an element $g' \in \St_\B(u) \cap \langle g \rangle$ such that $\varphi_u(g') = b^{-1}a$.
\end{lemma}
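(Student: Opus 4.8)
The statement is the exact analogue of Lemma~\ref{lemma:CanFindAB}, with the target $ab$ replaced by $b^{-1}a$ and the congruence class $ab$ replaced by $ab^{-1}$, so the plan is to mirror that proof. The one genuinely new feature is that the class $ab^{-1}$ is \emph{not} preserved by the operation ``square and project'': by Lemma~\ref{lemma:CongruenceOfProjections}, an element congruent to $ab^{-1}$ has the projections of its square congruent to $a^{-1}b$, and conversely. For this reason I would not induct on the single class $ab^{-1}$; instead I would prove the slightly stronger combined statement that for every $g\in\B$ with $g\equiv_{\B'}ab^{-1}$ \emph{or} $g\equiv_{\B'}a^{-1}b$, there exist a vertex $u$ and an element $g'\in\St_\B(u)\cap\langle g\rangle$ with $\varphi_u(g')=b^{-1}a$. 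Both classes have odd $b$-exponent, hence act non-trivially on the first level, which is exactly what makes the recursion run.

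The proof goes by induction on $|g|$. The case $|g|=1$ is impossible, since none of $a,b,a^{-1},b^{-1}$ lies in either class. The case $|g|=2$ I would dispatch by direct computation: the element $b^{-1}a$ is trivial; for $a^{-1}b$ one finds $(a^{-1}b)^2=(b^{-1}a,b^{-1}a)$, so $\varphi_\0$ of the square works; for $ba^{-1}$ one finds $(ba^{-1})^2=(b^{-1}a,ab^{-1})$, so again $\varphi_\0$ of the square works; and for $ab^{-1}$ one finds $(ab^{-1})^2=(a^{-1}b,ba^{-1})$ and then, squaring the second coordinate, $\varphi_{\1\0}(g^4)=b^{-1}a$. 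In each case the witnessing element is a power of $g$, hence lies in $\langle g\rangle$.

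For the inductive step with $|g|=n\geq 3$ I would follow Lemma~\ref{lemma:CanFindAB} step by step. As $g\notin\St_\B(1)$, write $g=\sigma(g_1,g_2)$, so $g^2=(g_2g_1,g_1g_2)$; by Lemma~\ref{lemma:CongruenceOfProjections} both $g_1g_2$ and $g_2g_1$ are congruent to the \emph{other} admissible class (still admissible for the combined statement), and by Lemma~\ref{lemma:LengthOfChildrenOfSquare} they have length at most $n$. If one of them, say $h=\varphi_\0(g^2)$, has length strictly less than $n$, I apply the induction hypothesis to $h$ to get a witness $h'=h^k\in\langle h\rangle$ projecting to $b^{-1}a$ at some vertex $w$, and then lift it: since $g^2\in\St_\B(1)\subseteq\St_\B(\0)$ and $\varphi_\0$ is a homomorphism on $\St_\B(\0)$, the element $g^{2k}\in\langle g\rangle$ lies in $\St_\B(\0 w)$ and satisfies $\varphi_{\0 w}(g^{2k})=\varphi_w(h')=b^{-1}a$ (and symmetrically with $\1$). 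If instead both projections have length exactly $n$, I run the geodesic-word analysis of Lemma~\ref{lemma:CanFindAB}: the substitution $a\mapsto(1,b)$, $b\mapsto\sigma(a,1)$ turns a geodesic for $g$ into geodesics for $g_1,g_2$ whose concatenations stay geodesic, and Lemmas~\ref{lemma:ForbiddenSubwordBBinverse} and~\ref{lemma:ForbiddenWordBsquared} forbid the patterns $ba^kb^{-1}$ and $b^{-2}a^kb^2$, whose presence would contract a further square and allow induction. Using that the total $a$-exponent of $g$ is $\pm 1$ (Theorem~\ref{thm:FactsAboutBasilica}), the only surviving configurations are short normal forms in which one child is a pure power of $a$, and these force $|g|=2$, contradicting $n\geq 3$ and closing the induction.

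The main obstacle is precisely the equal-length case: one must redo the delicate combinatorial bookkeeping of Lemma~\ref{lemma:CanFindAB} while tracking the sign changes between the two classes. The exponent-counting argument that eliminated the ``pure power of $a$'' child used that the $a$-exponent of $g$ equals $1$; here it is $+1$ for $ab^{-1}$ and $-1$ for $a^{-1}b$, so the normal-form enumeration and the forbidden-subword pruning must be verified separately for the two classes and for the two choices of which child is a power of $a$. I expect all of these to go through exactly as in Lemma~\ref{lemma:CanFindAB} once the correct signs are recorded, but this combinatorial case analysis is where essentially all the work lies; by contrast the class-flipping and the lifting of witnesses through $\varphi_\0$ and $\varphi_\1$ are routine once the combined induction statement is in place.
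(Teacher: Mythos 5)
Your proposal is correct and its combinatorial core is identical to the paper's: induction on $|g|$, squaring and projecting, Lemmas \ref{lemma:LengthOfChildrenOfSquare}, \ref{lemma:ForbiddenSubwordBBinverse} and \ref{lemma:ForbiddenWordBsquared} to force strict length decrease, and a normal-form enumeration of the exceptional geodesics using the exponent counts from Theorem \ref{thm:FactsAboutBasilica}. Where you genuinely differ is in how the class-swapping under squaring is absorbed: you widen the induction hypothesis to the union of the classes $ab^{-1}$ and $a^{-1}b$, so one level of descent suffices, whereas the paper keeps the hypothesis on $ab^{-1}$ alone and descends two levels at a time, passing from $g^2=(g_2g_1,g_1g_2)$ (components $\equiv_{\B'}a^{-1}b$) to the components of $(g_1g_2)^2$ and $(g_2g_1)^2$, which return to the class $ab^{-1}$ by two applications of Lemma \ref{lemma:CongruenceOfProjections}. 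The two organisations are equivalent; yours buys a cleaner inductive step (one application of the length lemmas, and your base-case computations $(a^{-1}b)^2=(b^{-1}a,b^{-1}a)$ and $(ba^{-1})^2=(b^{-1}a,ab^{-1})$ are correct) at the price of running the normal-form enumeration for both classes. One point in your sketch to repair: the surviving configurations do \emph{not} all force $|g|=2$. Exactly as in Lemma \ref{lemma:CanFindAB}, where $g=b^{-1}a^{k_1}b^2a^{k_2}$ survives the forbidden-subword pruning and is killed by a direct computation, here the form $g=a^{k_1}b^{-2}a^{k_2}b$ with $k_1+k_2=1$ (of length up to $|k_1|+|k_2|+3$) survives and must be eliminated by computing $g^2=(a^{-1}b^{k_1}a^{-1}b^{k_2}a,\,b^{k_1}a^{-1}b^{k_2})$ and noting the second component is strictly shorter than $g$; the analogous exceptional forms for the class $a^{-1}b$ have to be dispatched in the same way.
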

\begin{proof}
We again proceed by induction on $|g|$.

The case $|g|=1$ is impossible. If $|g|=2$, we have $g=b^{-1}a$ or $g=ab^{-1}$. Since $(ab^{-1})^{-2} = \sigma(a,1)(1,b^{-1})\sigma(a,1)\sigma(1,b^{-1}) = (b^{-1}a,ab^{-1})$, the result is true in those cases.

Let us now assume that the result is true for elements of length smaller than $n\in\NN$ and let $g\in \B$ be such that $g\equiv_{\B'} ab^{-1}$ and $|g|=n$. Writing $g=\sigma(g_1,g_2)$, $g_1g_2=(\alpha,\beta)$ and $g_2g_1 = (\alpha',\beta')$, if $|\alpha|, |\beta|, |\alpha'|$ or $|\beta'|$ is smaller than $n$, we find that the result is true by induction thanks to Lemma \ref{lemma:CongruenceOfProjections} and Lemma \ref{lemma:LengthOfChildrenOfSquare}.

Notice that once again, unless $g_1=a^k$ or $g_2=a^k$ for some $k\in \ZZ$, then one of $|\alpha|, |\beta|, |\alpha'|$ or $|\beta'|$ must be smaller than $n$, thanks to Lemma \ref{lemma:ForbiddenSubwordBBinverse} and the fact that the exponents of $b$ in $g_1$ and $g_2$ must sum to $1$.

As in the proof of Lemma \ref{lemma:CanFindAB}, this means that $g$ cannot contain a subword of the form $a^{k_1}b^{2l+1}a^{k_2}$ with $k_1,k_2\in\ZZ^*$ and $l\in \ZZ$. Therefore, we must have 
\[g=b^{2l_1-1}a^{k_1}b^{2l_2}a^{k_2} \dots b^{2l_i}a^{k_i}\]
or
\[g=a^{k_1}b^{2l_1}a^{k_2}b^{l_2} \dots a^{k_i}b^{2l_i-1}\]
with $\sum_{j=1}^{i} l_j = 0$ and $\sum_{j=1}^{i}k_j = 1$.

If $g=b^{2l_1-1}a^{k_1}b^{2l_2}a^{k_2} \dots b^{2l_i}a^{k_i}$, then unless $g=b^{-1}a$, $g$ must contain a subword of the form $ba^{k}b^{-1}$ or $b^{-2}a^kb^2$, which is impossible according to Lemmas \ref{lemma:ForbiddenSubwordBBinverse} and \ref{lemma:ForbiddenWordBsquared}.

If $g=a^{k_1}b^{2l_1}a^{k_2}b^{2l_2} \dots a^{k_i}b^{2l_i-1}$, then for the same reasons, we must have $g=ab^{-1}$ or $g=a^{k_1}b^{-2}a^{k_2}b$ with $k_1+k_2=1$. However, $(a^{k_1}b^{-2}a^{k_2}b)^2 = (a^{-1}b^{k_1}a^{-1}b^{k_2}a,b^{k_1}a^{-1}b^{k_2})$, and $|b^{k_1}a^{-1}b^{k_2}|\leq |k_1|+|k_2| + 1 < |g|$.

Hence, unless $g=ab^{-1}$ or $g=b^{-1}a$, we always have that one of $\alpha, \beta, \alpha', \beta'$ is of length smaller than $|g|$. We can therefore conclude by induction thanks to Lemma \ref{lemma:CongruenceOfProjections}.
\end{proof}

\begin{lemma}\label{lemma:ABPersists}
Let $u$ be a vertex of the rooted tree $X^*$. Then, there exists $g\in \langle ab \rangle \cap \St_\B(u)$ such that $\varphi_u(g) = ab$ or $\varphi_u(g) = ba$.
\end{lemma}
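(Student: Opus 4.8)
The plan is to exploit the self-similar recursion and produce the required element explicitly as $g = (ab)^{2^{|u|}}$, tracking how its section evolves down the unique path from the root to $u$. The guiding observation is that both $ab$ and $ba$ act nontrivially on the first level (they carry the permutation $\sigma$), so their squares lie in $\St_\B(1)$, where $\varphi_\0$ and $\varphi_\1$ are homomorphisms; moreover the two-element set $\{ab, ba\}$ turns out to be closed under the operation ``square, then project to a child''. This reduces the whole lemma to following a two-state automaton.

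Concretely, I would first record the two wreath-recursion computations
\[(ab)^2 = (ba,\, ba), \qquad (ba)^2 = (ba,\, ab),\]
obtained from $ab = \sigma(ba,1)$ and $ba = \sigma(a,b)$ by squaring (being careful with the left-action convention $g = \sigma^\epsilon(g_\0, g_\1)$ used in the paper; the first identity already appears in the proof of Lemma \ref{lemma:CanFindAB}). These identities say precisely that from the ``state'' $ab$ both children inherit the state $ba$, while from the state $ba$ the child $\0$ inherits $ba$ and the child $\1$ inherits $ab$. In particular every child-state again lies in $\{ab, ba\}$.

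I would then prove, by induction on $m = |w|$, the following helper statement: for every $s \in \{ab, ba\}$ and every vertex $w$ with $|w| = m$, one has $s^{2^m} \in \St_\B(w)$ and $\varphi_w(s^{2^m}) \in \{ab, ba\}$. The base case $m = 0$ is immediate, since then $w$ is the root and $\varphi_w(s) = s$. For the inductive step, write $w = x w'$ with $x \in X$ and $|w'| = m - 1$; since $s^2 \in \St_\B(1)$ we may write $s^2 = (t_\0, t_\1)$ with $t_x = \varphi_x(s^2) \in \{ab, ba\}$ by the computations above, and then, using that $\varphi_x|_{\St_\B(1)}$ is a homomorphism,
\[s^{2^m} = (s^2)^{2^{m-1}} = \left(t_\0^{\,2^{m-1}},\, t_\1^{\,2^{m-1}}\right).\]
Applying the induction hypothesis to $t_x$ and $w'$ gives $t_x^{2^{m-1}} \in \St_\B(w')$ with $\varphi_{w'}(t_x^{2^{m-1}}) \in \{ab, ba\}$, whence $s^{2^m}$ fixes $x$ and its section at $x$ fixes $w'$, so $s^{2^m} \in \St_\B(w)$ and $\varphi_w(s^{2^m}) = \varphi_{w'}(t_x^{2^{m-1}}) \in \{ab, ba\}$. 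Taking $s = ab$ and $w = u$ yields $g = (ab)^{2^{|u|}} \in \langle ab \rangle \cap \St_\B(u)$ with $\varphi_u(g) \in \{ab, ba\}$, which is exactly the claim, and $g$ is a genuine power of $ab$ as required.

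There is no deep obstacle here: the argument is a short induction once the two squaring identities are in hand, and the only point demanding care is getting those two recursions right under the paper's left-action convention and verifying that $\{ab, ba\}$ is genuinely closed under ``square-and-project''. It is this closure, rather than any length-reduction estimate of the kind used in Lemmas \ref{lemma:CanFindAB} and \ref{lemma:CanFindBInverseA}, that drives the proof.
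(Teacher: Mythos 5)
Your proposal is correct and follows exactly the paper's approach: the paper's proof consists of the same two squaring identities $(ab)^2=(ba,ba)$ and $(ba)^2=(ba,ab)$ followed by ``the result follows by induction,'' and your write-up simply makes that induction explicit (with the correct recursions under the paper's conventions).
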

\begin{proof}
We have $(ab)^2 = (ba,ba)$ and $(ba)^2 = (ba,ab)$. The result follows by induction.
\end{proof}

We can now finally piece all of these lemmas together to show that prodense subgroups of the Basilica group must project to $\B$ on some vertex.

\begin{prop}\label{prop:ProjectionsOfDenseAreNotProperForBasilica}
Let $H\leq \B$ be a subgroup such that $HN=\B$ for all non-trivial normal subgroups $N\trianglelefteq \B$ (in other words, $H$ is prodense). Then, there exists a vertex $u\in X^*$ such that $H_u = \B$, where, as in Notation \ref{notation:Projections}, $H_u=\varphi_u(\St_H(u))$.
\end{prop}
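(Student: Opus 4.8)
The plan is to locate a single vertex $u$ at which $\St_H(u)$ projects onto both generators, i.e.\ $a,b\in H_u$, whence $H_u\supseteq\langle a,b\rangle=\B$ and therefore $H_u=\B$ (since $H_u\leq\B_u=\B$, $\B$ being self-replicating by Theorem \ref{thm:FactsAboutBasilica}). The basic input is that prodensity forces surjectivity onto the abelianisation: as $\B'$ is a non-trivial normal subgroup we have $H\B'=\B$, so the image of $H$ in $\B/\B'\cong\ZZ^2$ is everything. The same applies to every projection $H_w$, because $H_w$ is prodense in $\B_w=\B$ by Lemma \ref{lemma:ProjectionsOfDenseAreDense}.

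First I would realise the \emph{exact} element $ab$ at some vertex. Choosing $h_1\in H$ with $h_1\equiv_{\B'}ab$ and applying Lemma \ref{lemma:CanFindAB} yields a vertex $u_1$ and an element $g_1\in\langle h_1\rangle\cap\St_\B(u_1)\subseteq\St_H(u_1)$ with $\varphi_{u_1}(g_1)=ab$, so $ab\in H_{u_1}$. Working now inside the prodense group $H_{u_1}\leq\B_{u_1}=\B$, I pick $h_2\in H_{u_1}$ with $h_2\equiv_{\B'}ab^{-1}$ and apply Lemma \ref{lemma:CanFindBInverseA} to obtain a descendant $u:=u_1w$ with $b^{-1}a\in(H_{u_1})_w=H_u$. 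Finally I propagate $ab$ down from $u_1$ to $u$: by Lemma \ref{lemma:ABPersists} a suitable power $(ab)^k$ lies in $\St_\B(w)$ and projects at $w$ to $ab$ or $ba$, and the corresponding power of $g_1$ then witnesses $ab\in H_u$ (descending one further level via $(ba)^2=(ba,ab)$ if the persistence lemma returns $ba$). At this common vertex both $ab,b^{-1}a\in H_u$, hence the exact element $a^2=(ab)(b^{-1}a)\in H_u$.

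The decisive step is to extract an exact \emph{generator} from $a^2$, thereby breaking a parity obstruction: note that $ab$ and $b^{-1}a$, together with everything they generate, lie in $\ker(\B\to\ZZ/2\ZZ)$ (the map sending $a,b\mapsto 1$), so no product of them can ever equal $a$ or $b$. Here I would exploit the self-similar identity $a^2=(1,b^2)=(1,(a,a))$: the depth-two sections of $a^2$ are trivial except at the vertex $\1\0$, where the section is exactly $a$. Concretely, if $\hat h\in\St_H(u)$ satisfies $\varphi_u(\hat h)=a^2$, then $a^2\in\St_\B(2)$ forces $\hat h\in\St_H(u\1\0)$ with $\varphi_{u\1\0}(\hat h)=a$, so $a\in H_{u\1\0}$. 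Propagating $ab$ (or $ba$) once more from $u$ down to $u\1\0$ by Lemma \ref{lemma:ABPersists} and multiplying by $a^{-1}$ on the appropriate side ($b=a^{-1}(ab)$ or $b=(ba)a^{-1}$) then gives $b\in H_{u\1\0}$ as well; taking $u=u\1\0$ we conclude $H_{u\1\0}\supseteq\langle a,b\rangle=\B$.

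The main obstacle I anticipate is the bookkeeping needed to realise $ab$ and $b^{-1}a$ as exact elements at one common vertex: the two length-reduction lemmas each deliver their target at an a priori uncontrolled vertex, so the argument must chain them through the prodensity of successive projections (Lemma \ref{lemma:ProjectionsOfDenseAreDense}) and use the persistence Lemma \ref{lemma:ABPersists} to slide an already-found element down to a deeper, shared vertex \emph{without losing its exact value}. The conceptual key---and the reason one only needs the two ``diagonal'' classes $ab$ and $ab^{-1}$, for which length reduction stays in a fixed target class, rather than a harder direct reduction to the toggling classes of $a$ and $b$---is the observation that the self-similarity identity $a^2=(1,(a,a))$ recovers an exact odd-length generator from the exact even-length element $a^2$.
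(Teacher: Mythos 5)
Your proposal is correct and follows essentially the same route as the paper: realise $ab$ at a vertex via Lemma \ref{lemma:CanFindAB}, use prodensity of the projection (Lemma \ref{lemma:ProjectionsOfDenseAreDense}) to realise $b^{-1}a$ at a descendant via Lemma \ref{lemma:CanFindBInverseA}, slide $ab$ down with Lemma \ref{lemma:ABPersists}, and extract $a$ (hence $b$) from $a^2=(1,(a,a))$ two levels further down. The only cosmetic difference is that when the persistence lemma returns $ba$ the paper forms $b^2=(ba)(b^{-1}a)^{-1}=(a,a)$ directly instead of descending an extra level to convert $ba$ back to $ab$; both variants work.
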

\begin{proof}
Since $H\B'=\B$, there exists $g\in H$ such that $g\equiv_{\B'} ab$. Hence, it follows from Lemma \ref{lemma:CanFindAB} that there exists $v\in X^*$ such that $ab\in H_v$. Now, by Lemma \ref{lemma:ProjectionsOfDenseAreDense}, $H_v$ is prodense in $\B$, so we have $H_v\B'=\B$. Hence, there exists $h\in H_v$ such that $h\equiv_{\B'} ab^{-1}$. Therefore, according to Lemma \ref{lemma:CanFindBInverseA}, there exists $v'$ such that $b^{-1}a\in (H_v)_{v'} = H_{vv'}$. From Lemma \ref{lemma:ABPersists}, we also have that either $ab\in H_{vv'}$ or $ba\in H_{vv'}$.

If $ab, b^{-1}a \in H_{vv'}$, then $a^2 \in H_{vv'}$. Since $a^2=(1,b^2)$ and $b^2 = (a,a)$, if we set $u=vv'\1\1 \in X^*$, we have that $a$ and either $ab$ or $ba$ are in $H_u$. Since $\B$ is generated by $a$ and $b$, we get $H_u=\B$.

Likewise, if $ba,b^{-1}a \in H_{vv'}$, then $b^2\in H_{vv'}$, and since $b^2=(a,a)$, by setting $u=vv'\1\in X^*$, we get that $a,b \in H_u$, so $H_u=\B$.
\end{proof}

We are now finally ready to prove the announced theorem.

\begin{thm}\label{thm:BasilicaIsInMF}
Every maximal subgroup of the Basilica group $\B$ is of finite index.
\end{thm}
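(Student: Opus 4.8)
The plan is to argue by contradiction, assembling the machinery developed earlier in this section. Suppose that $\B$ admits a maximal subgroup $M$ of infinite index. Since $\B$ is finitely generated (by $a$ and $b$) and infinite, and since every proper quotient of $\B$ lies in $\MF$ by Proposition \ref{prop:BasilicaIsJustNonVirtuallyNilpotent}, the hypotheses of Proposition \ref{prop:InfiniteIndexIffProdense} are met. I would therefore invoke the $(\Leftarrow)$ direction of that proposition to deduce that $M$ is a proper prodense subgroup of $\B$.

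Next I would feed this prodense subgroup into Proposition \ref{prop:ProjectionsOfDenseAreNotProperForBasilica}, which produces a vertex $u\in X^*$ with $M_u=\B$. The point is to read this as a statement about the projection of $M$ onto a copy of the whole group: because $\B$ is self-replicating by Theorem \ref{thm:FactsAboutBasilica}(i), we have $\B_u=\B$ for every vertex $u$, so the conclusion reads $M_u=\B_u$.

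This contradicts Theorem \ref{thm:ProjectionsOFMaximalAreMaximal}. That theorem applies to $\B$, since $\B$ is weakly branch (Theorem \ref{thm:FactsAboutBasilica}(i)) and every proper quotient of $\B$ is in $\MF$ (Proposition \ref{prop:BasilicaIsJustNonVirtuallyNilpotent}); it guarantees that the projection $M_u$ of the maximal subgroup $M$ of infinite index is itself a maximal subgroup of infinite index of $\B_u$, and in particular a \emph{proper} subgroup. Thus $M_u\ne \B_u$, contradicting $M_u=\B_u$ above. Since the existence of a maximal subgroup of infinite index is impossible, every maximal subgroup of $\B$ must be of finite index.

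The main obstacle is not this final assembly, which is essentially a one-line synthesis once the inputs are available, but rather the two ingredients that it rests upon and which were established beforehand: that every proper quotient of $\B$ lies in $\MF$ (Proposition \ref{prop:BasilicaIsJustNonVirtuallyNilpotent}, requiring the computation of $\B''$ via Lemma \ref{lemma:B''IsGamma3TimesGamma3} and the identification of $\B/\gamma_3(\B)$ with the Heisenberg group), and that prodense subgroups of $\B$ project onto the whole group at some vertex (Proposition \ref{prop:ProjectionsOfDenseAreNotProperForBasilica}, whose proof depends on the delicate length-reduction lemmas controlling how lengths of elements behave under the projections $\varphi_u$). With these in place, the theorem follows at once.
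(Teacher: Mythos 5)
Your proposal is correct and follows essentially the same route as the paper: obtain prodensity of $M$ from the fact that all proper quotients of $\B$ lie in $\MF$, apply Proposition \ref{prop:ProjectionsOfDenseAreNotProperForBasilica} to find a vertex $u$ with $M_u=\B$, and contradict Theorem \ref{thm:ProjectionsOFMaximalAreMaximal}. The only cosmetic difference is that you route the prodensity step through Proposition \ref{prop:InfiniteIndexIffProdense} while the paper argues it directly, which amounts to the same reasoning.
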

\begin{proof}
Suppose that there exists a maximal subgroup $M<\B$ of infinite index. By Proposition \ref{prop:BasilicaIsJustNonVirtuallyNilpotent}, every proper quotient of $\B$ is in $\mathcal{MF}$. Therefore, $M$ cannot contain any non-trivial normal subgroup of $\B$, so we must have $MN=\B$ for every non-trivial normal subgroup $N\trianglelefteq \B$. This implies, thanks to Proposition \ref{prop:ProjectionsOfDenseAreNotProperForBasilica}, that there exists $u\in X^*$ such that $M_u=\B$. However, by Theorem \ref{thm:ProjectionsOFMaximalAreMaximal}, $M_u$ must be a maximal subgroup of infinite index of $\B$, which is a contradiction. Hence, $\B$ admits no maximal subgroup of infinite index.
\end{proof}

As was mentioned above, in addition to being one of the rare non-linear examples of a group belonging to the class $\MF$, the Basilica group is, to the best of our knowledge, the first example of a weakly branch but not branch group in this class. It is also different in many other aspects from the groups studied by Pervova \cite{Pervova05} and their generalisations \cite{AlexoudasKlopschThillaisundaram16, KlopschThillaisundaram18}, such as the Grigorchuk group and the GGS groups. To name but a few, it is torsion-free, it is not just-infinite and it admits non-nilpotent quotients. This example thus serves to illustrate the wide range of algebraic properties that can be enjoyed by weakly branch groups in $\MF$.

\bibliography{biblio}

\begin{thebibliography}{10}

\bibitem{AlexoudasKlopschThillaisundaram16}
Theofanis Alexoudas, Benjamin Klopsch, and Anitha Thillaisundaram.
\newblock Maximal subgroups of multi-edge spinal groups.
\newblock {\em Groups Geom. Dyn.}, 10(2):619--648, 2016.

\bibitem{BartholdiGrigorchukSunic03}
Laurent Bartholdi, Rostislav~I. Grigorchuk, and Zoran \v{S}uni\'{k}.
\newblock Branch groups.
\newblock In {\em Handbook of algebra, {V}ol. 3}, volume~3 of {\em Handb.
  Algebr.}, pages 989--1112. Elsevier/North-Holland, Amsterdam, 2003.

\bibitem{BartholdiVirag05}
Laurent Bartholdi and B\'{a}lint Vir\'{a}g.
\newblock Amenability via random walks.
\newblock {\em Duke Math. J.}, 130(1):39--56, 2005.

\bibitem{Bondarenko10}
Ievgen~V. Bondarenko.
\newblock Finite generation of iterated wreath products.
\newblock {\em Arch. Math. (Basel)}, 95(4):301--308, 2010.

\bibitem{FrancoeurThesis19}
Dominik Francoeur.
\newblock {\em On maximal subgroups and other aspects of branch groups}.
\newblock PhD thesis, Universit\'{e} de Gen\`{e}ve, August 2019.
\newblock ID: unige:123493.

\bibitem{FrancoeurGarrido18}
Dominik Francoeur and Alejandra Garrido.
\newblock Maximal subgroups of groups of intermediate growth.
\newblock {\em Adv. Math.}, 340:1067--1107, 2018.

\bibitem{Grigorchuk80}
Rostislav~I. Grigorchuk.
\newblock On {B}urnside's problem on periodic groups.
\newblock {\em Funktsional. Anal. i Prilozhen.}, 14(1):53--54, 1980.

\bibitem{Grigorchuk83}
Rostislav~I. Grigorchuk.
\newblock On the {M}ilnor problem of group growth.
\newblock {\em Dokl. Akad. Nauk SSSR}, 271(1):30--33, 1983.

\bibitem{Grigorchuk00}
Rostislav~I. Grigorchuk.
\newblock Just infinite branch groups.
\newblock In {\em New horizons in pro-{$p$} groups}, volume 184 of {\em Progr.
  Math.}, pages 121--179. Birkh\"{a}user Boston, Boston, MA, 2000.

\bibitem{GrigorchukWilson03}
Rostislav~I. Grigorchuk and John~S. Wilson.
\newblock A structural property concerning abstract commensurability of
  subgroups.
\newblock {\em J. London Math. Soc. (2)}, 68(3):671--682, 2003.

\bibitem{GrigorchukZuk02}
Rostislav~I. Grigorchuk and Andrzej \.{Z}uk.
\newblock On a torsion-free weakly branch group defined by a three state
  automaton.
\newblock {\em Internat. J. Algebra Comput.}, 12(1-2):223--246, 2002.
\newblock International Conference on Geometric and Combinatorial Methods in
  Group Theory and Semigroup Theory (Lincoln, NE, 2000).

\bibitem{GrigorchukZuk02b}
Rostislav~I. Grigorchuk and Andrzej \.{Z}uk.
\newblock Spectral properties of a torsion-free weakly branch group defined by
  a three state automaton.
\newblock In {\em Computational and statistical group theory ({L}as {V}egas,
  {NV}/{H}oboken, {NJ}, 2001)}, volume 298 of {\em Contemp. Math.}, pages
  57--82. Amer. Math. Soc., Providence, RI, 2002.

\bibitem{GuptaSidki83}
Narain Gupta and Sa\"{\i}d Sidki.
\newblock On the {B}urnside problem for periodic groups.
\newblock {\em Math. Z.}, 182(3):385--388, 1983.

\bibitem{KlopschThillaisundaram18}
Benjamin Klopsch and Anitha Thillaisundaram.
\newblock Maximal subgroups and irreducible representations of generalized
  multi-edge spinal groups.
\newblock {\em Proc. Edinb. Math. Soc. (2)}, 61(3):673--703, 2018.

\bibitem{Nekrashevych13}
Volodymyr Nekrashevych.
\newblock Finitely presented groups associated with expanding maps.
\newblock In {\em Geometric and cohomological group theory}, volume 444 of {\em
  London Math. Soc. Lecture Note Ser.}, pages 115--171. Cambridge Univ. Press,
  Cambridge, 2018.

\bibitem{Pervova00}
Ekaterina~L. Pervova.
\newblock Everywhere dense subgroups of a group of tree automorphisms.
\newblock {\em Tr. Mat. Inst. Steklova}, 231(Din. Sist., Avtom. i Beskon.
  Gruppy):356--367, 2000.

\bibitem{Pervova05}
Ekaterina~L. Pervova.
\newblock Maximal subgroups of some non locally finite {$p$}-groups.
\newblock {\em Internat. J. Algebra Comput.}, 15(5-6):1129--1150, 2005.

\end{thebibliography}
\bibliographystyle{plain}

\end{document}